\newtheorem{theorem}{Theorem}[section]
\newtheorem{lemma}[theorem]{Lemma}
\newtheorem{proposition}[theorem]{Proposition}
\theoremstyle{definition}
\newtheorem{definition}[theorem]{Definition}
\newtheorem{remark}[theorem]{Remark}
\newtheorem{example}[theorem]{Example}
\newtheorem{problem}[theorem]{Problem}
\theoremstyle{plain}
\theoremstyle{definition}
\theoremstyle{remark}
\newcommand{\spn}{{\rm span}}
\newcommand{\cL}{{\mathcal L}}
\newcommand{\cT}{{\mathcal T}}
\newcommand{\cQ}{{\mathcal Q}}
\newcommand{\cP}{{\mathcal P}}
\newcommand{\cS}{{\mathcal S}}
\newcommand{\Cb}{{\mathbb C}}
\newcommand{\Eb}{{\mathbb E}}
\newcommand{\Zb}{{\mathbb Z}}
\newcommand{\Rb}{{\mathbb R}}
\newcommand{\Nb}{{\mathbb N}}
\newcommand{\supp}{{\rm supp}}
\newcommand{\tr}{{\rm tr}}
\newcommand{\UP}{{\rm UP}}
\newcommand{\Sym}{{\rm Sym}}
\newcommand{\AP}{{\rm AP}}
\newcommand{\Hom}{{\rm Hom}}
\newcommand{\ch}{{\bar{h}}}
\newcommand{\ddet}{{\rm det}}
\newcommand{\Fix}{{\rm Fix}}
\newcommand{\mmod}{\hspace*{1.2mm}{\rm mod}\hspace*{1.2mm}}
\begin{document}

\title[Entropy and the variational principle]{Entropy and the variational principle for actions of sofic groups}

\author{David Kerr}
\author{Hanfeng Li}
\address{\hskip-\parindent
David Kerr, Department of Mathematics, Texas A{\&}M University,
College Station TX 77843-3368, U.S.A.}
\email{kerr@math.tamu.edu}

\address{\hskip-\parindent
Hanfeng Li, Department of Mathematics, SUNY at Buffalo,
Buffalo NY 14260-2900, U.S.A.}
\email{hfli@math.buffalo.edu}

\date{February 21, 2011}

\begin{abstract}
Recently Lewis Bowen introduced a notion of entropy for measure-preserving actions
of a countable sofic group on a standard probability space admitting a
generating partition with finite entropy. By applying an operator algebra perspective we develop a
more general approach to sofic entropy which produces both measure and topological dynamical
invariants, and we establish the variational principle in this context.
In the case of residually finite groups we use the variational principle to compute the topological entropy
of principal algebraic actions whose defining group ring element is invertible in the full group $C^*$-algebra.
\end{abstract}

\maketitle

\section{Introduction}

Recently Lewis Bowen introduced a collection of entropy invariants for measure-preserving actions
of a countable sofic group on a standard probability space admitting a
generating partition with finite entropy \cite{Bowen10}. The basic idea is to model
the dynamics of a measurable partition of the probability space by means of partitions of a finite space
on which the group acts in a local and approximate way according to the definition of soficity.
The cardinality of the set of all such model partitions is then used to asymptotically generate a
number along a fixed sequence of sofic approximations. This quantity is then shown to be invariant over
all generating measurable partitions with finite entropy. It might however depend on the
choice of sofic approximation sequence, yielding in general a collection of invariants.
A major application of this sofic measure entropy was the extension of the Ornstein-Weiss
entropy classification of Bernoulli shifts over countably infinite amenable groups
to a large class of nonamenable groups, including all nontorsion countable sofic groups \cite{Bowen10}.

%One question that arises from Bowen's work is whether there exist analogous
%invariants for continuous actions of a countable sofic group on a compact metrizable space.
%In the case of actions on a zero-dimensional space, an obvious strategy is to try to apply
%Bowen's ideas in a direct way to clopen partitions, with the finite modeling based not on the measure
%of certain dynamically generated set intersections but merely on whether or not these intersections
%are nonempty. This is certainly feasible, but it raises the problem of how to handle higher-dimensional
%spaces and other kinds of open covers.

Given Bowen's work, it is natural to ask whether there exist analogous
invariants for continuous actions of a countable sofic group on a compact metrizable space, and if
so whether they are connected to Bowen's measure entropy via a variational principle. One might also wonder
whether there exists an alternative approach to sofic measure entropy
that enables one to extend Bowen's invariants to actions that are not
generated by a partition with finite entropy. Such a general notion of sofic measure entropy
would be not only valuable from a purely measure-dynamical viewpoint but also
necessary for the formulation of a variational principle for topological systems.

%The goal of this paper is to develop a notion of topological entropy that is defined
%in the spirit of Bowen's measure entropy but applies to actions on any compact metrizable space,
%and then to establish a variational principle relating the two entropies.
%As part of this program we also extend Bowen's measure entropy to actions that are not necessarily
%generated by a partition with finite entropy.
The goal of this paper is to provide affirmative answers to all of these questions.
The key is to view the dynamics at the
operator algebra level and replace the combinatorics of partitions with an analysis of multiplicative or
approximately multiplicative linear maps that are approximately equivariant.
As a consequence our definitions of topological and measure entropy will not
involve the counting of partitions but rather the computation of the maximal cardinality of
$\varepsilon$-separated subsets of certain spaces of linear maps, in the spirit of Rufus Bowen's
approach to topological entropy for $\Zb$-actions \cite{Bow71}. In fact our definitions can be
translated into the language of $\varepsilon$-separation between embedded sofic approximations, which
can be viewed as systems of interlocking approximate partial orbits (see Remark~\ref{R-embedded sofic approx}),
but we will adhere throughout to the linear perspective since it is
instrumental to our development of measure entropy.
%which forms an integral part of our program.

It is instructive to compare the situation of sofic measure entropy with the origins
of entropy for single measure-preserving transformations in the work of Kolmogorov and Sinai \cite{Kol58,Sin59}.
Kolmogorov showed that all dynamically generating partitions for a given transformation have the same entropy, and
used this to define the entropy of the system when such a partition exists, assigning the value $\infty$ otherwise.
Sinai then proposed the now standard definition which takes the supremum of the entropies over
all partitions. This gives reasonable values in the absence of a generating partition, in particular
for the identity transformation, and agrees with Kolmogorov's definition when a generating partition exists.
Lewis Bowen's sofic measure entropy is based, in the spirit of Kolmogorov, on the comparison of
generating partitions with finite entropy, and leaves open the problem of assigning a value
in the absence of such a partition. In this case however one cannot extend the definition by taking a
supremum as Sinai did, since Bowen's entropy can increase under taking factors, in particular
for Bernoulli actions of free groups. Thus a novel
strategy is required, and our idea is to apply the notion of dynamical generator in the broader operator-algebraic
context of finite sets of $L^\infty_\Rb$ functions and even bounded sequences of such functions. Then every action admits
a dynamical generator, and we show that the entropy as we define it
takes a common value on such generators, in accord with the approaches of
Kolmogorov and Bowen. Since we are no longer working with partitions, Bowen's combinatorial
arguments must be replaced by a completely different type of analysis that plays off
the operator and Hilbert space norms at the function level. The point in using functions is that
a continuous spectrum can witness dynamical behaviour at arbitrarily fine scales, in contrast to the fixed
scale of a partition. In fact one can in principle compute our sofic measure entropy by means of a single
function, since $L^\infty$ over a standard probability space is itself singly generated as a von Neumann algebra.
However, for the proof of the variational principle it is necessary to work with bounded sequences of functions,
since not all topological systems are finitely generated in the $C^*$-dynamical sense.

We begin in Section~\ref{S-measure} by setting up our operator-algebraic definition of entropy
for measure-preserving actions,
which at the local level applies to any bounded sequence in $L^\infty_\Rb$ over the measure space in question.
For technical simplicity we will actually work with sequences in the unit ball of $L^\infty_\Rb$, which
via scaling does not affect the scope of the definition.
Theorem~\ref{T-gen comparison} asserts that two such sequences that are dynamically generating
have the same entropy relative to a fixed sofic approximation sequence,
which enables us to define the global measure entropy of the system
without the assumption of a generating partition with finite entropy. Section~\ref{S-comparison}
is devoted to establishing the equality with Bowen's entropy in the presence of a generating partition
with finite entropy.
%(Theorem~\ref{T-prime global}).
Extending a computation from \cite{Bowen10}
in the finite entropy setting, we show in a separate paper that, for a countable sofic group,
a Bernoulli action with infinite entropy base has infinite entropy with respect
to every sofic approximation sequence \cite{KerLi10B}.
As a consequence, such Bernoulli actions do not admit a generating countable partition with finite entropy,
which in the amenable case is well known and in the case that the acting group contains the
free group on two generators was established by Bowen in \cite{Bowen10}.

Once we have set up the measurable framework we then translate everything into topological terms,
with locality now referring to sequences in the unit ball of the $C^*$-algebra of continuous functions
over the compact space in question (Section~\ref{S-top}). The arguments in this case are much
simpler since one can
%remove: simply
work with unital homomorphisms and
does not need to worry about controlling an $L^2$-norm under perturbations,
which is the source of considerable technical complications in the measurable setting
(cf.\ Proposition~\ref{P-bounded}).
As before, two dynamically generating sequences have the same entropy (Theorem~\ref{T-gen comparison top}),
and since dynamically generating sequences always exist by metrizability we thereby obtain
a conjugacy invariant. For a topological Bernoulli action the value of this invariant is easily
computed to be the logarithm of the cardinality of the base. We also show at the end of Section~\ref{S-top}
that the restriction of a topological Bernoulli action to a proper closed invariant subset has strictly
smaller entropy. This yields an entropy proof of Gromov's result that countable sofic groups are surjunctive
\cite{Gro99} (see also \cite{Wei00}) in line with what Gromov observed in the case of amenable groups
using classical entropy.

In order to facilitate the comparison with topological entropy in Sections~\ref{S-variational}
and \ref{S-algebraic}, we show in Section~\ref{S-Hom} how to express measure entropy in terms
of unital homomorphisms instead of linear maps which are merely approximately multiplicative.
In Section~\ref{S-variational} we establish the variational principle, which asserts
that, with respect to a fixed sofic approximation sequence, the topological entropy of a continuous action on
a compact metrizable space is equal to the supremum of the measure entropies over all invariant
Borel probability measures.

Finally in Section~\ref{S-algebraic} we give an application of the variational principle to the
study of algebraic actions of a residually finite group $G$ that complements
a recent result of Lewis Bowen \cite{Bowen10b}.
Given an element $f$ in the integral group ring $\Zb G$ which is invertible in the full group $C^*$-algebra
of $G$, we show that the topological entropy of the canonical action of $G$ on $\widehat{\Zb G / \Zb G f}$,
with respect to any sofic approximation sequence arising from finite quotients of $G$, is equal to the
logarithm of the Fuglede-Kadison determinant of $f$ as an element in the group von Neumann algebra of $G$.
In \cite{Bowen10b} Bowen established the same result for measure entropy with respect to the normalized Haar measure
under the assumption that $f$ is invertible in $\ell^1 (G)$. In the case of amenable acting groups and
classical entropy these relationships were developed in
\cite{LinSchWar90,Den06,DenSch07,Li10}.

In \cite{Bowen10a} Bowen showed that, when the acting group is amenable and there exists a
generating finite measurable partition, the sofic measure entropy
as defined in \cite{Bowen10} is equal to the classical Kolmogorov-Sinai measure entropy, independently of the sofic
approximation sequence.
%It follows by the variational principle of Section~\ref{S-variational}
%that, for a continuous action of a countable amenable group on a compact metrizable space
%with a dynamically generating clopen partition, the sofic topological entropy
%is equal to its classical counterpart.
In \cite{KerLi10} we will show
that, for any measure-preserving action of a countable amenable group on a standard probability space,
the sofic measure entropy defined in Section~\ref{S-measure} agrees with its classical counterpart,
independently of the sofic approximation sequence. It follows by the variational principle of Section~\ref{S-variational}
and the classical variational principle that, for a continuous action of a countable amenable group on a
compact metrizable space, the sofic topological entropy with respect to any sofic approximation sequence
is equal to the classical topological entropy, which for $\Zb$-actions was introduced in \cite{AdlKonMcA65}.
%Hence we obtain the same
%conclusion for sofic topological entropy by the variational principle of Section~\ref{S-variational}.
We will also give in \cite{KerLi10} a direct argument for this equality
which sheds some more light on the sofic definition.
%The variational principle of Section~\ref{S-variational} can thus be regarded as an
%extension of the classical variational principle.

We round out the introduction with some terminology, conventions, and notation used in the paper,
in particular regarding sofic groups and unital commutative $C^*$-algebras.
Write $\Sym (F)$ for the group of permutations of a set $F$, or simply $\Sym (d)$ when $F = \{ 1,\dots ,d \}$.
Let $G$ be a countable discrete group. We write $e$ for its identity element.
We say that $G$ is {\it sofic} if for $i\in\Nb$ there
are a sequence $\{ d_i \}_{i=1}^\infty$ of positive integers and a sequence
$\{ \sigma_i \}_{i=1}^\infty$ of maps $s\mapsto\sigma_{i,s}$ from $G$ to $\Sym (d_i )$
which is asymptotically multiplicative and free in the sense that
\[
\lim_{i\to\infty}
\frac{1}{d_i} \big| \{ k\in \{ 1,\dots ,d_i \} : \sigma_{i,st} (k) = \sigma_{i,s} \sigma_{i,t} (k) \} \big| = 1
\]
for all $s,t\in G$ and
\[
\lim_{i\to\infty}
\frac{1}{d_i} \big| \{ k\in \{ 1,\dots ,d_i \} : \sigma_{i,s} (k) \neq \sigma_{i,t} (k) \} \big| = 1
\]
for all distinct $s,t\in G$. Such a sequence $\{ \sigma_i \}_{i=1}^\infty$ for which
$\lim_{i\to\infty} d_i = \infty$ will be called a {\it sofic approximation sequence} for $G$.
We include the condition $\lim_{i\to\infty} d_i = \infty$ as it is crucial for certain
results in the paper (in particular for the variational principle), and note that
it is automatic if $G$ is infinite.
%For convenience we will refer to any map $\sigma : G\to\Sym (d)$ for some $d\in\Nb$
%as a {\it sofic approximation}, regardless of whether it is approximately multiplicative or
%approximately free.
Throughout the paper the notation $\Sigma = \{ \sigma_i : G\to\Sym (d_i ) \}_{i=1}^\infty$ will be tacitly
understood to refer to a fixed sofic approximation sequence which is arbitrary unless otherwise indicated.

All function spaces will be over the complex numbers, unless the notation is tagged with the subscript $\Rb$,
in which case we mean the real subspace of real-valued functions.
The unital commutative $C^*$-algebras that will be encountered in this paper
are function spaces of the form $L^\infty (X,\mu )$ for a standard probability space $(X,\mu )$
(these are the commutative von Neumann algebras with separable predual),
$C(X)$ for a compact metrizable space $X$, and $\Cb^d$ for $d\in\Nb$, which can also be viewed as $C(X)$ where
$X=\{ 1,\dots ,d \}$. The norm on these $C^*$-algebras will be written
%$\| \cdot \|$, or
$\| \cdot \|_\infty$.
% in case of confusion.
The adjoint in each of these cases is given by pointwise complex conjugation,
and following general $C^*$-algebra convention we will write the adjoint of an element $f$ by $f^*$.
A $^*$-subalgebra of a $C^*$-algebra is a subalgebra which is closed under taking adjoints.
A linear subspace of a $C^*$-algebra is said to be self-adjoint if it is closed under taking adjoints.
A projection in a $C^*$-algebra is an element $p$ satisfying $p^2 = p$ and $p^* = p$.
Via charateristic functions, projections in $C(X)$ correspond to clopen subsets of $X$
while projections in $L^\infty (X,\mu )$ correspond to measurable subsets of $X$ modulo
sets of measure zero.

Throughout we will be working with unital positive linear maps between unital commutative $C^*$-algebras,
or unital self-adjoint subspaces thereof.
A linear map $\varphi :V\to W$ between unital self-adjoint subspaces of unital commutative $C^*$-algebras
is said to be positive if $\varphi (f) \geq 0$ whenever $f\geq 0$ and unital if $\varphi (1) = 1$.
In the case that $\varphi$ is positive its norm $\| \varphi \| = \sup_{\| f \|\leq 1} \| \varphi (f) \|$
is equal to $\| \varphi (1) \|$. In particular $\| \varphi \| = 1$ if $\varphi$ is both unital and positive.
Given unital self-adjoint linear subspaces $V_1 \subseteq V_2$ of a unital commutative $C^*$-algebra
and a $d\in\Nb$, every unital positive linear map $\varphi : V_1 \to \Cb^d$
admits a unital linear extension $\tilde{\varphi} : V_2 \to \Cb^d$ with $\| \tilde{\varphi} \| = 1$
by applying the Hahn-Banach theorem
to each of the $d$ linear functionals obtained by composing $\varphi$ with the coordinate
projections $\Cb^d \to\Cb$. Since $\tilde{\varphi} (1) = 1$ such an extension is automatically
positive (see Section~4.3 of \cite{KadRinI83}).

A unital linear map $\varphi : A\to B$ between unital commutative $C^*$-algebras is
said to be a homomorphism if $\varphi (fg) = \varphi (f)\varphi (g)$ for all $f,g\in A$.
By Gelfand theory every unital commutative $C^*$-algebra is of the form $C(K)$ for some compact
Hausdorff space $K$ which is unique up to homeomorphism
(in the case of $L^\infty (X,\mu )$ this space is extremely disconnected), and every unital
homomorphism $\varphi : C(K_1 )\to C(K_2 )$ where $K_1$ and $K_2$ are compact Hausdorff spaces
is given by composition with a continuous map from $K_2$ to $K_1$. In particular, unital homomorphisms
are positive. See \cite{KadRinI83} for more background on $C^*$-algebras.

For a $d\in\Nb$ we will invariably use $\zeta$ to denote the uniform probability measure on $\{ 1,\dots ,d \}$,
which will be regarded as a state (i.e., a unital positive linear functional) on
$\Cb^d \cong C(\{ 1,\dots ,d \} )$ whenever appropriate.
Given a map $\sigma : G\to\Sym (d)$, we will also use $\sigma$ to denote
the induced action on $\Cb^d \cong C(\{ 1,\dots ,d \} )$, i.e., for $f\in\Cb^d$ and $s\in G$ we will write
$\sigma_s (f)$ to mean $f\circ\sigma_s^{-1}$.

Given a state $\mu$ on a unital commutative
$C^*$-algebra, we will write $\| \cdot \|_2$ for the associated $L^2$-norm $f\mapsto\mu (f^* f)^{1/2}$,
with $\mu$ being understood from the context. In the case of $L^\infty (X,\mu)$ this will always be the
$L^2$-norm with respect to $\mu$, and for $\Cb^d$ it will always be the $L^2$-norm with respect to $\zeta$, i.e.,
$f\mapsto (d^{-1} \sum_{k=1}^d |f(k)|^2 )^{1/2}$.

Actions of a group $G$ on a space $X$ will invariably be denoted by $\alpha$,
although the actual use of this letter will be reserved for the induced action on the appropriate space of
functions over $X$. For the action on $X$ we will simply use the concatenation $(s,x)\mapsto sx$. Thus $\alpha_s (f)$
for $s\in G$ will mean the function $x\mapsto f(s^{-1} x)$.
\medskip

\noindent{\it Acknowledgements.}
The first author was partially supported by NSF
grant DMS-0900938. He is grateful to Lewis Bowen for several seminal discussions. Part of this work was carried
out during a visit of the first author to SUNY at Buffalo in February 2010 and he thanks the
analysis group there for its hospitality. The second author was partially supported by NSF grant DMS-0701414.
We thank the referee for some helpful comments which include the simple proof of Theorem~\ref{T-approximate det}
and the fact that Lemma~\ref{L-fixed point} is valid in the stated generality.

\section{Measure entropy}\label{S-measure}

%Note: is the possible emptiness of finite sets a problem?

In this section we will define our notion of entropy
for measure-preserving actions of a countable sofic group, as inspired by Bowen's entropy from \cite{Bowen10}.
%but using operator-algebraic concepts.
We will show in Section~\ref{S-comparison} that the two definitions of measure
entropy agree in the presence of a generating countable measurable partition with finite entropy.

%By {\it countable} we will mean of cardinality at most that of the natural numbers.

Throughout this section and the next $G$ will be a countable sofic group,
$(X,\mu )$ a standard probability space, and $\alpha$ an action of $G$
by measure-preserving transformations on $X$. As explained in the introduction,
$\alpha$ will actually denote the induced action of $G$ on $L^\infty (X,\mu )$ by automorphisms, so that
for $f\in L^\infty (X,\mu )$ and $s\in g$ the function $\alpha_s (f)$ is given by $x \mapsto f(s^{-1} x)$.

By taking characteristic functions, a measurable partition of $X$ corresponds, modulo sets of measure zero,
to a partition of unity in $L^\infty (X,\mu )$ consisting of projections.
We will abuse notation by using the same symbol to denote both.

The von Neumann subalgebras of $L^\infty (X,\mu )$ are, by Kaplansky's density theorem \cite[Thm.\ 5.3.5]{KadRinI83},
precisely the unital $^*$-subalgebras which are closed in the $L^2$ norm. These correspond,
modulo measure algebra isomorphism, to the measurable factors of $X$
via composition of functions. So the $G$-invariant von Neumann subalgebras
of $L^\infty (X,\mu )$ correspond, modulo measure algebra $G$-isomorphism,
to the dynamical factors of $X$ with respect to the action of $G$.
A set $\Omega\subseteq L^\infty (X,\mu )$ is said to be {\it dynamically generating}
if it is not contained in any proper $G$-invariant von Neumann subalgebra of $L^\infty (X,\mu )$.
When $\Omega$ is a partition of unity consisting of projections this is equivalent to the usual notion of a
generating partition.

%We wish to define the entropy $h_{\Sigma ,\mu} (\cP )$ of a countable subset $\cP$
%of the unit ball of $L^{\infty}_{\Rb}(X, \mu)$. Our approach requires measuring the distance between certain maps
%in terms of their values on $\cP$. As $\cP$ might be infinite, this means that it will be more convenient to work
%with sequences instead of sets for the purpose of technical manipulation. We will thus define
%the entropy $h_{\Sigma ,\mu} (\cS )$ of a sequence $\cS$ of elements in the unit ball of $L^{\infty}_{\Rb}(X, \mu)$.
%This will easily be seen to depend only on the image of $\cS$ as a function on $\Nb$, in which case
%$h_{\Sigma ,\mu} (\cP )$ can be defined to be the common value of $h_{\Sigma ,\mu} (\cS )$
%over all sequences $\cS$ with image equal to $\cP$. We will show in Theorem~\ref{T-gen comparison} that
%$h_{\Sigma ,\mu} (\cS )$ depends only on the $G$-invariant von Neumann subalgebra of $L^\infty (X,\mu )$
%generated by $\cS$, so that we can define the global entropy $h_{\Sigma ,\mu} (X,G)$
%as the common value of $h_{\Sigma ,\mu} (\cS )$ over all dynamically generating
%sequences $\cS$ in the unit ball of $L^{\infty}_{\Rb}(X, \mu)$.

Our first goal will be to define
the entropy $h_{\Sigma ,\mu} (\cS )$ of a sequence $\cS$ of elements in the unit ball of $L^{\infty}_{\Rb}(X, \mu)$.
We could similarly define the entropy of an arbitrary subset of the unit ball of $L^{\infty}_{\Rb}(X, \mu)$,
but for the purpose of reducing the number of parameters in the definitions we will use the sequential formalism
(see however the discussion after Definition~\ref{D-global}).
We will show in Theorem~\ref{T-gen comparison} that
$h_{\Sigma ,\mu} (\cS )$ depends only on the $G$-invariant von Neumann subalgebra of $L^\infty (X,\mu )$
generated by $\cS$, so that we can define the global entropy $h_{\Sigma ,\mu} (X,G)$
as the common value of $h_{\Sigma ,\mu} (\cS )$ over all dynamically generating
sequences $\cS$ in the unit ball of $L^{\infty}_{\Rb}(X, \mu)$.

Note that, since $(X,\mu )$ is assumed to be a standard probability space, there always exists a generating
finite partition of unity in $L^{\infty} (X, \mu)$. Indeed we can identify $(X,\mu )$ with a subset of
$[0,1]$ consisting of a subinterval with Lebesgue measure and countably many atoms and take our partition
of unity to consist of the functions $x\mapsto x$ and $x\mapsto 1-x$. Thus for the purpose of defining
global measure entropy we could instead simply work with finite partitions
of unity in $L^\infty_\Rb (X,\mu )$. However, the use of sequences
is necessary in order to establish the variational principle (Theorem~\ref{T-variational}) due to the
fact that continuous actions on compact metrizable spaces need not be finitely generated at the function level.

Let $\sigma$ be a map from $G$ to $\Sym (d)$ for some $d\in\Nb$.
%For $f\in\Cb^d$ and $s\in G$ we write $\sigma_s (f)$ for the composition $f\circ\sigma_s^{-1} \in\Cb^d$.
%Let $\cP$ be a finite partition of unity in $L^\infty (X,\mu )$.
Let $\cS=\{p_n\}_{n\in \Nb}$ be a sequence of elements in the unit ball of $L^{\infty}_{\Rb}(X, \mu)$
(with respect to the $L^{\infty}$-norm).
Let $F$ be a nonempty finite subset of $G$
and $m\in\Nb$. We write $\cS_{F,m}$ for the set of all
products of the form $\alpha_{s_1} (f_1 ) \cdots \alpha_{s_j} (f_j )$ where $1\le j\le m$ and
$f_1 , \dots ,f_j \in \{p_1, \dots, p_m\}$ and $s_1 , \dots ,s_j \in F$.
%For $m=1$ we write simply $\cS_F$.
On the set of unital positive linear maps from some self-adjoint unital linear subspace of $L^\infty (X,\mu )$
containing $\spn (\cS )$ to $\Cb^d$ we define the pseudometric
\begin{align*}
\rho_\cS (\varphi , \psi ) &= \sum_{n=1}^\infty \frac{1}{2^n}\| \varphi (p_n) - \psi (p_n) \|_2 .
\end{align*}
In the following definition we consider the collection of unital positive maps from $L^\infty (X,\mu )$ to $\Cb^d$
which, in a local sense, are approximately mutiplicative,
approximately pull the uniform probability measure $\zeta$ back to $\mu$, and are approximately equivariant.

\begin{definition}\label{D-UP}
Let $m\in\Nb$ and $\delta > 0$.
Define $\UP_\mu (\cS , F,m,\delta ,\sigma )$ to be the set of all unital positive linear maps
$\varphi : L^\infty (X,\mu ) \to \Cb^d$ such that
\begin{enumerate}
\item[(i)] $\| \varphi (\alpha_{s_1} (f_1 ) \cdots \alpha_{s_j} (f_j ))
- \varphi (\alpha_{s_1} (f_1 ))\cdots\varphi (\alpha_{s_j} (f_j )) \|_2 < \delta$
for all $1\le j\le m$, $f_1 , \dots ,f_j \in \{p_1, \dots, p_m\}$ and $s_1 , \dots ,s_j \in F$,

\item[(ii)] $| \zeta\circ\varphi (f) - \mu (f) | < \delta$ for all $f\in \cS_{F, m}$,
%$\sum_{f\in\cP_{F,m}} | \zeta\circ\varphi (f) - \mu (f) | < \delta$,

\item[(iii)] $\| \varphi\circ\alpha_s (f) - \sigma_s \circ\varphi (f) \|_2 < \delta$ for all $s\in F$ and
$f\in \{p_1, \dots, p_m\}$.
%$\sum_{f\in\cP} \| \varphi\circ\alpha_s (f) - \sigma_s \circ\varphi (f) \|_2 < \delta$ for all $s\in F$.
\end{enumerate}
\end{definition}

%Note that condition (i) in the second part of the definition implies that for every
%$\varphi\in\Hom_\mu (\cP , F,\delta ,\sigma )$ and $f\in \cP_F$ we have
%\begin{align*}
%\| \varphi (f) \|_2^2 &= \zeta (\varphi (f)^2 ) \leq \zeta (\varphi (f)) \\
%&\leq \mu (f) + | \zeta\circ\varphi (f) - \mu (f) | \\
%&< \| f \|_2^2 + \delta .
%\end{align*}
%Thus for a fixed $\cP$ and $F$ we will have $\| \varphi (f) \|_2 \leq \lambda \| f \|_2$ for all $f\in\spn (\cP_F )$
%for some $\lambda > 1$ not depending on $\varphi$ or $\sigma$ with $\lambda\to 1$ as $\delta\to 0$.
%We will use this fact in the proof of Proposition~\ref{P-gen comparison proj}.

For a pseudometric space $(Y,\rho )$ and an $\varepsilon\geq 0$ we write $N_\varepsilon (Y, \rho )$ for the
maximal cardinality of finite $\varepsilon$-separated subset of $Y$ respect to $\rho$.
%$\varepsilon$-covering number of $Y$ (i.e., the minimal number of open $\varepsilon$-balls require to cover $Y$)
%with respect to the pseudometric $\rho$.
In the case $\varepsilon = 0$ the number $N_0 (Y, \rho )$
is simply cardinality modulo the relation of zero distance.
%We write $\sep_\varepsilon (Y,\rho )$ for the maximal cardinality of an $\varepsilon$-separated subset
%of $Y$, where $\varepsilon$-separation means by distance at least $\varepsilon$.

Throughout this section, as elsewhere, $\Sigma = \{ \sigma_i : G \to \Sym (d_i ) \}_{i=1}^\infty$
is a fixed sofic approximation sequence.

%Note that the state $\mu$ in the next definition is not required to be $G$-invariant. In fact
%in our proof of the variational principle we will need to apply the definitions to
%states which are only approximately $G$-invariant.

Note that $\UP_\mu (\cS ,F,m, \delta ,\sigma)\supseteq \UP_\mu (\cS ,F',m', \delta' ,\sigma)$
and hence $N_{\varepsilon}(\UP_\mu (\cS ,F,m, \delta ,\sigma), \rho_\cS)\geq N_{\varepsilon'}(\UP_\mu (\cS ,F',m', \delta' ,\sigma), \rho_\cS)$
%$h_{\Sigma ,\mu}^\varepsilon (\cS ,F,m,\delta ) \leq h_{\Sigma ,\mu}^{\varepsilon /2} (\cS ,F' ,m' ,\delta' )$
whenever $F\subseteq F'$, $m\leq m'$, $\delta\geq\delta'$, and $\varepsilon\leq \varepsilon'$.
%The division of $\varepsilon$ by two
%on the right side of the inequality is necessary because the $\varepsilon$-spanning set is required to
%lie in the immediate space in question and not in some universal ambient space. We would also not
%need to divide $\varepsilon$ by two on the right if we had instead defined
%$h_{\Sigma ,\mu}^\varepsilon (\cS ,F,m,\delta )$ using $\varepsilon$-separated sets of maximal cardinality.
%Note that this alternative $\varepsilon$-separated set definition gives the same value
%for $h_{\Sigma ,\mu} (\cS )$, and so it is only for general convenience that we have used the
%$\varepsilon$-spanning set formulation.

\begin{definition}\label{D-entropy}
Let $\cS$ be a sequence in the unit ball of $L^\infty (X,\mu )$, $\varepsilon > 0$, $F$ a nonempty finite subset of $G$,
$m\in \Nb$, and $\delta > 0$. We define
\begin{align*}
h_{\Sigma ,\mu}^\varepsilon (\cS ,F,m,\delta ) &=
\limsup_{i\to\infty} \frac{1}{d_i} \log N_\varepsilon (\UP_\mu (\cS ,F,m,\delta ,\sigma_i ),\rho_{\cS} ) ,\\
h_{\Sigma ,\mu}^\varepsilon (\cS ,F,m) &= \inf_{\delta > 0} h_{\Sigma ,\mu}^\varepsilon (\cS ,F,m,\delta ) ,\\
h_{\Sigma ,\mu}^\varepsilon (\cS ,F) &= \inf_{m\in\Nb} h_{\Sigma ,\mu}^\varepsilon (\cS ,F,m) ,\\
h_{\Sigma ,\mu}^\varepsilon (\cS ) &= \inf_{F} h_{\Sigma ,\mu}^\varepsilon (\cS ,F) ,\\
h_{\Sigma ,\mu} (\cS ) &= \sup_{\varepsilon > 0} h_{\Sigma ,\mu}^\varepsilon (\cS )
\end{align*}
where the infimum in the second last line is over all nonempty finite subsets of $G$.
If $\UP_\mu (\cS ,F,\delta ,\sigma_i )$ is empty for all sufficiently large $i$,
we set $h_{\Sigma ,\mu}^\varepsilon (\cS , F,\delta ) = -\infty$.
\end{definition}

%\begin{remark}\label{R-pou}
%In Definitions~\ref{D-UP Hom} and \ref{D-entropy} we could allow $\cS$ to be any finite set of self-adjoint elements in
%$L^\infty (X,\mu )$ whose linear span contains $1$, or (granted $\UP_\mu (\cdot )$ is redefined appropriately)
%simply any finite subset of $L^\infty (X,\mu )$ if we
%wish to drop explicit mention of the order structure.
%%in the spirit of operator spaces.
%Working with partitions of unity allows us to avoid unnecessary technical complications.
%The same remark applies to the definition of topological entropy in Section~\ref{S-top}.
%\end{remark}

\begin{remark}\label{R-pou}
If we add $1$ to $\cS$ by setting $p'_1=1$ and $p'_{j+1}=p_j$ for all $j\in \Nb$, then for any nonempty finite subset
$F$ of $G$, any $m\in \Nb$, any $\delta>0$ and any map $\sigma$ from $G$ to $\Sym(d)$ for some $d\in \Nb$,  we have $\UP_{\mu}(\cS, F, m,\delta, \sigma)\supseteq \UP_{\mu}(\cS', F, m+1,\delta, \sigma)\supseteq\UP_{\mu}(\cS, F, m+1,\delta, \sigma)$. Thus $h_{\Sigma, \mu}(\cS)=h_{\Sigma, \mu}(\cS')$.
%Similarly, $h_{\Sigma, \mu, \lambda}(\cS)=h_{\Sigma, \mu, \lambda}(\cS')$ for all $\lambda>1$.
\end{remark}

%It is not hard to see from the definition that $h_\Sigma (\cS )$ depends only on the image of $\cS$,
%and so we can define the entropy of a countable subset of the unit ball of $L^\infty_\Rb (X,\mu )$ as follows.
%For simplicity, however, we will continue to
%express everything in terms of sequences until we focus on the case of finite partitions after
%Definition~\ref{D-global}.
%
%\begin{definition}\label{D-set}
%For a countable subset $\cP$ of the unit ball of $L^\infty_\Rb (X,\mu )$ we define $h_{\Sigma ,\mu} (\cP )$
%as the common value of $h_\Sigma (\cS )$ over all sequences $\cS$ whose image is $\cP$.
%\end{definition}

Notice that the quantity $N_\varepsilon (\UP_\mu (\cS ,F,m,\delta ,\sigma_i ),\rho_{\cS} )$ in
Definition~\ref{D-entropy} is a purely local one, in the sense that the maps in
$\UP_\mu (\cS ,F,m,\delta ,\sigma_i )$ could have been merely defined on the finite-dimensional unital self-adjoint
linear subspace of $L^\infty (X,\mu )$ which gives meaning to the conditions in its definition.
Indeed any such map on this subspace can be extended to a unital positive map on all of $L^\infty (X,\mu )$
by the Hahn-Banach theorem, as discussed in the introduction.
This locality is crucial in the proof of the variational principle
in Section~\ref{S-variational}. On the other hand, in order to carry out the perturbation argument
showing that $h_{\Sigma, \mu}(\cS)$ depends only on the $G$-invariant von Neumann subalgebra
generated by $\cS$ (Theorem~\ref{T-gen comparison}) one also needs some $L^2$-norm control
on unital positive maps beyond the finite-dimensional subspace on which the
computation of $N_\varepsilon (\UP_\mu (\cS ,F,m,\delta ,\sigma_i ),\rho_{\cS} )$ depends.
To this end we next demonstrate that $h_{\Sigma, \mu}(\cS)$ can be calculated using
unital positive maps which are uniformly bounded with respect to the $L^2$-norm.
Note that if $\cS$ consists of projections then this can be accomplished much more easily by
simply composing with conditional expectations onto finite-dimensional $^*$-subalgebras.

\begin{definition} \label{D-bounded}
Let $\cS$ be a sequence in the unit ball of $L^{\infty}(X, \mu)$, $\lambda>1$, $F$ a nonempty finite subset
of $G$, $m\in \Nb$, $\delta>0$, and $\sigma$ a map from $G$ to $\Sym(d)$ for some $d\in \Nb$.
Define $\UP_{\mu, \lambda}(\cS, F, m, \delta, \sigma)$ to be the subset of
$\UP_{\mu}(\cS, F, m, \delta, \sigma)$ consisting of $\varphi$ satisfying
$\|\varphi(f)\|_2\le \lambda \|f\|_2$ for all $f\in L^{\infty}(X, \mu)$.
\end{definition}

\begin{proposition} \label{P-bounded}
Let $\cS = \{ p_n \}_{n=1}^\infty$ be a sequence in the unit ball of $L^{\infty}(X, \mu)$ and $\lambda>1$.
Then
\[ h_{\Sigma, \mu}(\cS)=\sup_{\varepsilon>0}\inf_F\inf_{m\in \Nb}\inf_{\delta>0}\limsup_{i\to \infty}\frac{1}{d_i}\log N_{\varepsilon}(\UP_{\mu, \lambda}(\cS, F, m, \delta, \sigma_i), \rho_\cS),\]
where $F$ ranges over all nonempty finite subsets of $G$.
\end{proposition}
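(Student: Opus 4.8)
The plan is to show that the right-hand side equals $h_{\Sigma,\mu}(\cS)$ by proving the two inequalities separately. One direction is trivial: since $\UP_{\mu,\lambda}(\cS,F,m,\delta,\sigma)\subseteq\UP_\mu(\cS,F,m,\delta,\sigma)$, we immediately get $N_\varepsilon(\UP_{\mu,\lambda}(\cS,F,m,\delta,\sigma_i),\rho_\cS)\le N_\varepsilon(\UP_\mu(\cS,F,m,\delta,\sigma_i),\rho_\cS)$, and taking the appropriate $\limsup$ and infima shows that the right-hand side is at most $h_{\Sigma,\mu}(\cS)$. So the real content is the reverse inequality: every $\varphi\in\UP_\mu(\cS,F,m,\delta,\sigma)$ can, after suitably enlarging $F$, $m$ and shrinking $\delta$, be replaced by a map $\tilde\varphi\in\UP_{\mu,\lambda}(\cS,F',m',\delta',\sigma)$ which is $\varepsilon'$-close to it in $\rho_\cS$ (for $\varepsilon'$ only slightly smaller than $\varepsilon$), and so that distinct maps in a separated set give rise to maps that are still separated. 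This then bounds $N_\varepsilon(\UP_\mu(\cS,F,m,\delta,\sigma_i),\rho_\cS)$ in terms of $N_{\varepsilon'}(\UP_{\mu,\lambda}(\cS,F',m',\delta',\sigma_i),\rho_\cS)$, and after the sup over $\varepsilon$ the loss disappears.

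The key step is the perturbation that turns an arbitrary unital positive $\varphi:L^\infty(X,\mu)\to\Cb^d$ satisfying the approximate conditions into one that is globally $\lambda$-bounded for the $L^2$-norm, at the cost of only a small $\rho_\cS$-perturbation on the coordinates $p_n$. Here is the mechanism I expect to use. Fix a large $N$ and a fine finite partition of unity $\{q_1,\dots,q_r\}\subseteq L^\infty_\Rb(X,\mu)$ consisting of projections such that each $p_n$ for $n\le N$ is approximated in $L^2$ (indeed in $L^\infty$) by a function measurable with respect to $\{q_l\}$; since conditions (ii) force $\zeta\circ\varphi(q_l)\approx\mu(q_l)$ and conditions (i) force $\varphi$ to be approximately multiplicative on products from $\{q_l\}$, the images $\varphi(q_l)$ are approximately a partition of unity in $\Cb^d$ consisting of elements that are approximately projections with $\zeta(\varphi(q_l))\approx\mu(q_l)$. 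One can then round $\varphi(q_l)$ to an honest partition of $\{1,\dots,d\}$ into sets $A_1,\dots,A_r$ with $\zeta(A_l)\approx\mu(q_l)$, and define $\tilde\varphi$ to be (a small perturbation of) the conditional-expectation-type map $f\mapsto\sum_l \frac{1}{\mu(q_l)}\big(\int f q_l\,d\mu\big)\mathbf 1_{A_l}$ — i.e. average $f$ over each atom $q_l$ and transplant to $A_l$. Such a map is unital, positive, and manifestly $L^2$-contractive up to an error controlled by the discrepancies $|\zeta(A_l)-\mu(q_l)|$, hence $\lambda$-bounded for $\lambda>1$ once $N$ is large and $\delta$ small; and on each $p_n$ with $n\le N$ it differs from $\varphi(p_n)$ by something small in $\|\cdot\|_2$ because $p_n$ is close to its $\{q_l\}$-conditional expectation and $\varphi$ was approximately multiplicative and measure-preserving. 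Because $\rho_\cS$ weights the $n$-th coordinate by $2^{-n}$, controlling only finitely many coordinates $p_1,\dots,p_N$ suffices to control $\rho_\cS(\varphi,\tilde\varphi)$ up to $2^{-N}$.

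The main obstacle, and the reason Proposition~\ref{P-bounded} needs a genuine argument rather than the one-line conditional-expectation trick available when $\cS$ consists of projections, is that $\cS$ is an arbitrary bounded sequence of self-adjoint functions with possibly continuous spectrum, so one cannot simply compose with a conditional expectation onto a finite-dimensional $^*$-subalgebra and stay inside $\UP_\mu(\cS,F,m,\delta,\sigma)$ for the original parameters; the discretization error in approximating $p_n$ by $\{q_l\}$-measurable functions propagates through the products appearing in conditions (i) and (ii) for $\cS_{F,m}$, and through the equivariance condition (iii), so one must choose $N$ large enough (depending on $F,m,\delta$ and the modulus of $L^2$-continuity of the spectral data of the $p_n$) and then verify that $\tilde\varphi$ still lies in $\UP_{\mu,\lambda}$ with only mildly relaxed parameters $F'\supseteq F$, $m'\ge m$, $\delta'<\delta$ — this bookkeeping, together with checking that the map $\varphi\mapsto\tilde\varphi$ does not collapse $\varepsilon$-separated sets by more than a vanishing amount, is where the work lies. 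Once the perturbation is in hand, assembling the inequalities through the nested infima in Definition~\ref{D-entropy} and taking $\sup_{\varepsilon>0}$ is routine.
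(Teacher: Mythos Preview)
Your overall strategy --- build a fine partition $\{q_l\}$, push $\varphi$ through a conditional-expectation-type map based on that partition, and check it lands in $\UP_{\mu,\lambda}$ with nearby $\rho_\cS$-values --- is the right shape, and indeed is what the paper does. But there is a genuine gap in your mechanism.

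You write that ``conditions (ii) force $\zeta\circ\varphi(q_l)\approx\mu(q_l)$ and conditions (i) force $\varphi$ to be approximately multiplicative on products from $\{q_l\}$.'' This is the step that fails. The conditions defining $\UP_\mu(\cS,F,m,\delta,\sigma)$ constrain $\varphi$ \emph{only} on $\cS_{F,m}$, i.e.\ on products of the $\alpha_s(p_n)$'s. The projections $q_l$ are auxiliary objects you introduced to discretize the $p_n$'s; they need not lie in (or even near) $\spn(\cS_{F,m})$, so the hypotheses on $\varphi$ say nothing whatsoever about $\varphi(q_l)$. Knowing that each $p_n$ is well approximated by a $\spn(q_l)$-measurable function is the wrong direction: to control $\varphi(q_l)$ you need the \emph{reverse} approximation, namely that each $q_l$ is close to something built from the $\alpha_s(p_n)$'s.

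The paper closes this gap in two moves you are missing. First, it reduces to the case where $\cS$ is dynamically generating (replace $L^\infty(X,\mu)$ by the $G$-invariant von Neumann subalgebra generated by $\cS$); without this reduction there is no hope of approximating arbitrary projections by polynomials in the $\alpha_s(p_n)$'s. Second, it invokes Kaplansky's density theorem to approximate each $q\in\cQ$ in $L^2$ by an element $\tilde q\in\spn(\cS_{E,\ell})$ with $\|\tilde q\|_\infty\le 1$, for some larger finite set $E\supseteq F$ and integer $\ell\ge m$. One then works with $\varphi\in\UP_\mu(\cS,E,4\ell,\eta',\sigma)$ for suitably small $\eta'$: on this stricter set the approximate multiplicativity and measure conditions now \emph{do} reach the approximants $\tilde q$ (and products thereof), which is what lets one show that $\varphi$ is $L^2$-bounded on $\spn(\cS_{E,2\ell})$ and that $\varphi(q')$ (with $q'=\tilde q\,\overline{\tilde q}$) is close to an honest partition of unity in $\Cb^d$. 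From there the construction of $\Gamma(\varphi)$ via normalization and conditional expectation proceeds along the lines you sketched. Your ``bookkeeping'' paragraph also has the parameter flow slightly tangled: one fixes target parameters $(F,m,\delta)$ for $\UP_{\mu,\lambda}$ and then finds stricter source parameters $(E,4\ell,\eta')$ for $\UP_\mu$, not the other way around.
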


\begin{proof}
Replacing $L^{\infty}(X, \mu)$ by the $G$-invariant von Neumann subalgebra generated by $\cS$ if necessary,
we may assume that $\cS$ is dynamically generating.
Since $\UP_{\mu}(\cS, F, m, \delta, \sigma)\supseteq \UP_{\mu, \lambda}(\cS, F, m, \delta, \sigma)$,
the left side of the displayed equality is clearly bounded below by the right side.

To prove the inverse inequality, by Remark~\ref{R-pou} we may assume that $p_1=1$.
It suffices to show that, for any $\varepsilon>0$, one has
\begin{align}\label{E-bound}
h_{\Sigma, \mu}^{\varepsilon}(\cS)\le \inf_F\inf_{m\in \Nb}\inf_{\delta>0}\limsup_{i\to \infty}\frac{1}{d_i}\log N_{\varepsilon/2}(\UP_{\mu, \lambda}(\cS, F, m, \delta, \sigma_i), \rho_\cS).
\end{align}
Set $\lambda_1=\min(2, \lambda^{1/3})$. Let $F$ be a finite subset of $G$ containing $e$, $m\in \Nb$ with $2^{-(m-1)}<\varepsilon/8$, and $0<\delta<\varepsilon/4$.

Take a finite partition of unity $\cQ$ in $L^{\infty}(X, \mu)$ consisting of projections such
that $\|f-\Eb(f|\cQ)\|_{\infty}<(18m)^{-1}\delta$ for every $f\in \cS_{F, m}$, where
$\Eb(\cdot |\cQ)$ denotes the conditional expectation from $L^{\infty}(X, \mu)$ to $\spn(\cQ)$.
%Set
%$$M=\max(\max_{f\in \cS_{F, m}}\sum_{q\in \cQ}\frac{\mu(fq)}{\mu(q)}, \max_{(p, s)\in \cS\times F}\sum_{q\in \cQ}\frac{\mu(\alpha_s(p)q)}{\mu(q)}).$$

Take $0<\eta<(4|\cQ|)^{-1}$ be a small number which we will determine in a moment.
%such that
%\[(2(2|\cQ|\eta)^{1/2}+\frac{(2|\cQ|\eta)^{1/2}}{1-(2|\cQ|\eta)^{1/2}})M
%< \frac{\delta}{9m|\cS|^m|F|^m}.\]
%and if $\theta$ is a linear map from $\spn(\cQ)$ to some Hilbert space
%satisfying $|\left<q_1, q_2\right>-\left<\theta(q_1), \theta(q_2)\right>|<4\eta$ for all $q_1, q_2\in \cQ$,
%then $\lambda^{-1}_1\|f\|_2\le \theta(f)\|_2\le \lambda_1\|f\|_2$ for all $f\in \spn(\cQ)$.
Since $\cS$ is dynamically generating and $p_1=1$, by Kaplansky's density theorem \cite[Thm.\ 5.3.5]{KadRinI83}
there are a finite set $E\subseteq G$ containing $F$ and an integer
$\ell\ge m$ such that for each $q\in\cQ$ there exists
some $\tilde{q}\in \spn(\cS_{E, \ell})$ satisfying $\|\tilde{q}\|_{\infty}\le 1$ and $\|q-\tilde{q}\|_2<\eta$.
Set $q'=\tilde{q}\overline{\tilde{q}}$. Then $q'\in \spn(\cS_{E, 2\ell})$, $q'\ge 0$, $\|q'\|_{\infty}\le 1$,
and
\[
\|q-q'\|_2=\|q\overline{q}-\tilde{q}\overline{\tilde{q}}\|_2\le \|q(\overline{q}-\overline{\tilde{q}})\|_2+\|(q-\tilde{q})\overline{\tilde{q}}\|_2\le 2\|q-\tilde{q}\|_2<2\eta.
\]
%Set $\cQ'=\{q': q\in \cQ\}$.
Denote by $\theta$ the linear map $\spn(\cQ)\rightarrow L^{\infty}(X, \mu)$ sending $q$ to $q'$.
Then $\theta$ is positive.
%For any $q_1, q_2\in \cQ$, one has
%\[|\mu(q_1q_2)-\mu(q'_1q'_2)|\le \|q_1q_2-q'_1q'_2\|_2\le \|q_1(q_2-q'_2)\|_2+\|(q_1-q'_1)q'_2\|_2< 4\eta.\]
%Thus $\lambda^{-1}_1\|f\|_2\le \theta(f)\|_2\le \lambda_1\|f\|_2$ for all $f\in \spn(\cQ)$.
When $\eta$ is small enough, we have
$\|\theta(f)-f\|_2\le (18m)^{-1}\delta\|f\|_2$ and $\|\theta(f)\|_2\le \lambda_1\|f\|_2$
for all $f\in \spn(\cQ)$.

Take $0<\eta'<\delta/3$ such that if $\varphi$ is a linear map from $\spn(\cS_{E, 2\ell})$ to some Hilbert space
satisfying $|\left<f_1, f_2\right>-\left<\varphi(f_1), \varphi(f_2)\right>|<4\eta'$ for all $f_1, f_2\in \cS_{E, 2\ell}$,
then $\|\varphi(f)\|_2\le \lambda_1\|f\|_2$ for all $f\in \spn(\cS_{E, 2\ell})$.

%Take  a finite subset $F'$ of $G$ containing $E\cup F$, an integer $m'\ge \max(4\ell, m)$, and $0<\delta'<\min(\delta/3, \eta')$.
Given a map $\sigma : G\to\Sym(d)$ for some $d\in \Nb$,
% is a good enough
%sofic approximation of $G$,
we will construct a map $\Gamma: \UP_{\mu}(\cS, E, 4\ell, \eta', \sigma)\rightarrow \UP_{\mu, \lambda}(\cS, F, m, \delta, \sigma)$ such that $\rho_\cS(\Gamma(\varphi), \varphi)<\varepsilon/4$ for every $\varphi\in \UP_{\mu}(\cS, E, 4\ell, \eta', \sigma)$.
Then for any $\varphi, \psi\in \UP_{\mu}(\cS, E, 4\ell, \eta', \sigma)$ one has
\begin{align*}
\rho_\cS(\varphi, \psi)&\le \rho_\cS(\varphi, \Gamma(\varphi))+\rho_{\cS}(\Gamma(\varphi), \Gamma(\psi))+\rho_\cS(\Gamma(\psi), \psi)\\
&< \frac{\varepsilon}{2}+\rho_{\cS}(\Gamma(\varphi), \Gamma(\psi)).
\end{align*}
Thus for any $\varepsilon$-separated subset $L$ of $\UP_{\mu}(\cS, E, 4\ell, \eta', \sigma)$ with respect to $\rho_\cS$, $\Gamma(L)$ is $\varepsilon/2$-separated. Therefore $N_{\varepsilon}(\UP_{\mu}(\cS, E, 4\ell, \eta', \sigma), \rho_\cS)\le N_{\varepsilon/2}(\UP_{\mu, \lambda}(\cS, F, m, \delta, \sigma), \rho_\cS)$, and hence $h_{\Sigma, \mu}^\varepsilon(\cS, E, 4\ell, \eta')\le  \limsup_{i\to \infty}\frac{1}{d_i}\log N_{\varepsilon/2}(\UP_{\mu, \lambda}(\cS, F, m, \delta, \sigma_i), \rho_\cS)$.
Since $F$ can be chosen to contain an arbitrary finite subset of $G$, $m$ can be arbitrarily large,
and $\delta$ can be arbitrarily small, this implies \eqref{E-bound}.

Let $\varphi\in \UP_{\mu}(\cS, E, 4\ell, \eta', \sigma)$.
For any $1\le j\le 2\ell$ and $(h_1, s_1), (h_2, s_2)\in \{p_1, \dots, p_{2\ell}\}^j\times E^j$, since $4\ell\ge 2j$,
we have
\begin{align*}
\lefteqn{\bigg|\bigg<\prod_{k=1}^{j}\alpha_{s_{1, k}}(h_{1, k}),
\prod_{k=1}^{j}\alpha_{s_{2, k}}(h_{2, k})\bigg>-\bigg<\varphi\bigg(\prod_{k=1}^{j}\alpha_{s_{1, k}}(h_{1, k})\bigg),
\varphi\bigg(\prod_{k=1}^{j}\alpha_{s_{2, k}}(h_{2, k})\bigg)\bigg>\bigg|} \hspace*{5mm}\\
\hspace*{5mm} &= \bigg|\mu\bigg(\prod_{k=1}^{j}\alpha_{s_{1, k}}(h_{1, k})\alpha_{s_{2, k}}(h_{2, k})\bigg)-
\zeta\bigg(\varphi\bigg(\prod_{k=1}^{j}\alpha_{s_{1, k}}(h_{1, k})\bigg)\varphi\bigg(\prod_{k=1}^{j}
\alpha_{s_{2, k}}(h_{2, k})\bigg)\bigg)\bigg|\\
&\le \bigg|\mu\bigg(\prod_{k=1}^{j}\alpha_{s_{1, k}}(h_{1, k})\alpha_{s_{2, k}}(h_{2, k})\bigg)-
\zeta\bigg(\varphi\bigg(\prod_{k=1}^{j}\alpha_{s_{1, k}}(h_{1, k})\alpha_{s_{2, k}}(h_{2, k})\bigg)\bigg)\bigg|\\
&\hspace*{10mm} \ +\bigg|\zeta\bigg(\varphi\bigg(\prod_{k=1}^{j}\alpha_{s_{1, k}}(h_{1, k})
\alpha_{s_{2, k}}(h_{2, k})\bigg)\bigg) \\
&\hspace*{50mm} \ -\zeta\bigg(\varphi\bigg(\prod_{k=1}^{j}\alpha_{s_{1, k}}(h_{1, k})\bigg)\varphi\bigg(\prod_{k=1}^{j}
\alpha_{s_{2, k}}(h_{2, k})\bigg)\bigg)\bigg|\\
&< \eta'+\bigg\|\varphi\bigg(\prod_{k=1}^{j}\alpha_{s_{1, k}}(h_{1, k})\alpha_{s_{2, k}}(h_{2, k})\bigg)
-\varphi\bigg(\prod_{k=1}^{j}\alpha_{s_{1, k}}(h_{1, k})\bigg)\varphi\bigg(\prod_{k=1}^{j}
\alpha_{s_{2, k}}(h_{2, k})\bigg)\bigg\|_2\\
&\le \eta'+\bigg\|\varphi\bigg(\prod_{k=1}^{j}\alpha_{s_{1, k}}(h_{1, k})
\alpha_{s_{2, k}}(h_{2, k})\bigg)-\prod_{k=1}^{j}\varphi(\alpha_{s_{1, k}}(h_{1, k}))
\varphi(\alpha_{s_{2, k}}(h_{2, k}))\bigg\|_2\\
&\hspace*{10mm} \ + \bigg\|\bigg(\prod_{k=1}^{j}\varphi(\alpha_{s_{1, k}}(h_{1, k}))\bigg)\bigg(\prod_{k=1}^{j}
\varphi(\alpha_{s_{2, k}}(h_{2, k}))\bigg)\\
&\hspace*{50mm} \ -\varphi\bigg(\prod_{k=1}^{j}\alpha_{s_{1, k}}(h_{1, k})\bigg)\varphi\bigg(\prod_{k=1}^{j}
\alpha_{s_{2, k}}(h_{2, k})\bigg)\bigg\|_2\\
&< 2\eta'+\bigg\|\prod_{k=1}^{j}\varphi(\alpha_{s_{1, k}}(h_{1, k}))-
\varphi\bigg(\prod_{k=1}^{j}\alpha_{s_{1, k}}(h_{1, k})\bigg)\bigg\|_2\\
&\hspace*{10mm} \ + \bigg\|\prod_{k=1}^{j}\varphi(\alpha_{s_{2, k}}(h_{2, k}))-\varphi\bigg(\prod_{k=1}^{j}
\alpha_{s_{2, k}}(h_{2, k})\bigg)\bigg\|_2\\
&< 4\eta'.
\end{align*}
By the choice of $\eta'$, we conclude that $\|\varphi(f)\|_2\le \lambda_1\|f\|_2$ for all $f\in \spn(\cS_{E, 2\ell})$.
Thus $\|\varphi\circ \theta(f)\|_2\le \lambda^2_1\|f\|_2$ for all $f\in \spn(\cQ)$.

As $\theta$ and $\varphi$ are positive, $\sum_{q\in \cQ}\varphi\circ \theta(q)\ge 0$.
Since $\theta(\cQ)\subseteq \spn(\cS_{E, 2\ell})$ and $1=p_1\in \spn(\cS_{E, 2\ell})$, we have
%\begin{eqnarray*}
%\|\sum_{q\in \cQ}\theta(q)-1\|_2=\|\sum_{q\in \cQ}(\theta(q)-q)\|_2\le \sum_{q\in \cQ}\|\theta(q)-q\|_2<2|\cQ|\eta,
%\end{eqnarray*}
%and,
\begin{align*}
\bigg\|\sum_{q\in \cQ}\varphi\circ \theta(q)-1\bigg\|_2&= \bigg\|\varphi\bigg(\sum_{q\in \cQ}\theta(q)-1\bigg)\bigg\|_2
\le \lambda_1\bigg\|\sum_{q\in \cQ}\theta(q)-1\bigg\|_2\\
&= \lambda_1\bigg\|\sum_{q\in \cQ}(\theta(q)-q)\bigg\|_2
\le \lambda_1 \sum_{q\in \cQ}\|\theta(q)-q\|_2\\
&< 2\lambda_1|\cQ|\eta\le 4|\cQ|\eta.
\end{align*}
Then there exists a subset $J\subseteq \{1, \dots, d\}$ with $|J|\ge d(1-4|\cQ|\eta)$
such that $ |\sum_{q\in \cQ}\varphi\circ \theta(q)(a)-1|<(4|\cQ|\eta)^{1/2}$ for all $a\in J$.
Then $\sum_{q\in \cQ}\varphi\circ \theta(q)(a)>1-(4|\cQ|\eta)^{1/2}>0$ for all $a\in J$.
Take a unital positive linear map $\tilde{\varphi}:\spn(\cQ)\rightarrow \Cb^d$
such that $\tilde{\varphi}(q)=(\sum_{q_1\in \cQ}\varphi\circ \theta(q_1))^{-1}\varphi\circ \theta(q)$ on $J$ for every
$q\in \cQ$.
Now we define $\Gamma(\varphi):L^{\infty}(X, \mu)\rightarrow \Cb^d$ to be $\tilde{\varphi}\circ \Eb(\cdot|\cQ)$.
Clearly $\Gamma(\varphi)$ is a unital positive linear map.

Denote by $P_J$ the orthogonal projection $\Cb^d\rightarrow \Cb^J$.
For any $q\in \cQ$, we have
\begin{align*}
\|\varphi\circ \theta(q)-\tilde{\varphi}(q)\|_2
&\le \|(1-P_J)(\varphi\circ \theta(q)-\tilde{\varphi}(q))\|_2+
\|P_J(\varphi\circ \theta(q)-\tilde{\varphi}(q))\|_2\\
&\le \|\varphi\circ \theta(q)-\tilde{\varphi}(q)\|_{\infty}\bigg(\frac{d-|J|}{d}\bigg)^{1/2} +\frac{(4|\cQ|\eta)^{1/2}}{1-(4|\cQ|\eta)^{1/2}} \|\varphi\circ \theta(q)\|_2\\
&\le (\|\varphi\circ \theta(q)\|_{\infty}+\|\tilde{\varphi}(q)\|_{\infty})(4|\cQ|\eta)^{1/2}+\frac{(4|\cQ|\eta)^{1/2}}{1-(4|\cQ|\eta)^{1/2}} \|\varphi\circ \theta(q)\|_{\infty}\\
&\le 2(4|\cQ|\eta)^{1/2}+\frac{(4|\cQ|\eta)^{1/2}}{1-(4|\cQ|\eta)^{1/2}}.
\end{align*}
When $\eta$ is small enough, we obtain $\|\varphi\circ \theta(g)-\tilde{\varphi}(g)\|_2\le (\lambda-\lambda_1^2)\|g\|_2$ for
all $g\in \spn(\cQ)$. Then $\|\tilde{\varphi}(g)\|_2\le \|\varphi\circ \theta(g)-\tilde{\varphi}(g)\|_2+\|\varphi\circ\theta(g)\|_2\le \lambda \|g\|_2$ for all $g\in \spn(\cQ)$, and hence $\|\Gamma(\varphi)(f)\|_2\le \lambda\|\Eb(f|\cQ)\|_2\le \lambda \|f\|_2$ for all $f\in L^{\infty}(X, \mu)$.

Let $f\in \cS_{F, m}$. Set $f'=\Eb(f|\cQ)$.
Then $f'=\sum_{p\in \cQ}\frac{\mu(fq)}{\mu(q)}q$, and hence
\begin{align*}
\|\varphi(\theta(f'))-\tilde{\varphi}(f')\|_2
&=\bigg\|\sum_{q\in \cQ}\frac{\mu(fq)}{\mu(q)}(\varphi\circ \theta(q)-\tilde{\varphi}(q))\bigg\|_2\\
&\le \sum_{q\in \cQ}\frac{\mu(fq)}{\mu(q)}\|\varphi\circ \theta(q)-\tilde{\varphi}(q)\|_2\\
&\le (2(4|\cQ|\eta)^{1/2}+\frac{(4|\cQ|\eta)^{1/2}}{1-(4|\cQ|\eta)^{1/2}})\sum_{q\in \cQ}\frac{\mu(fq)}{\mu(q)}.
%&<& \frac{\delta}{9m|\cS|^m|F|^m}.
\end{align*}
When $\eta$ is small enough, we get
\[\|\varphi(\theta(f'))-\tilde{\varphi}(f')\|_2<\frac{\delta}{9m}.\]
Since $f, \theta(f')\in \spn(\cS_{E, 2\ell})$, we have
\begin{align} \label{E-bound2}
\|\varphi(f)-\Gamma(\varphi)(f)\|_2
&\le \|\varphi(f)-\varphi(\theta(f'))\|_2+\|\varphi(\theta(f'))-\Gamma(\varphi)(f)\|_2\\
\nonumber &\le \lambda_1\|f-\theta(f')\|_2+\|\varphi(\theta(f'))-\tilde{\varphi}(f')\|_2\\
\nonumber &< \lambda_1\|f-f'\|_2+\lambda_1\|f'-\theta(f')\|_2+\frac{\delta}{9m}\\
\nonumber &\le \frac{\lambda_1\delta}{18m}+\frac{\lambda_1\delta\|f'\|_2}{18m}+\frac{\delta}{9m}\\
\nonumber &\le \frac{\delta}{9m}+\frac{\delta}{9m}+\frac{\delta}{9m}= \frac{\delta}{3m}.
\end{align}
%Similarly, when $\eta$ is small enough, for any $s\in F$ and $p\in \{p_1, \dots, p_m\}$ we have
%\begin{align} \label{E-bound1}
%\|\varphi(\alpha_s(p))-\Gamma(\varphi)(\alpha_s(p))\|_2<\frac{\delta}{3m}.
%\end{align}

For all $1\le j\le m$ and $(h, s)\in \{p_1, \dots, p_m\}^j\times F^j$ we have, since
$E\supseteq F$ and $\ell\ge m$,
\begin{align*}
%\sum_{(h,s)\in \cS^m\times F^m}
\lefteqn{\bigg\|\Gamma(\varphi)\bigg(\prod_{k=1}^j\alpha_{s_k}(h_k)\bigg)-
\prod_{k=1}^j\Gamma(\varphi)(\alpha_{s_k}(h_k))\bigg\|_2} \hspace*{25mm}\\
\hspace*{25mm} &\le
%\sum_{(h,s)\in \cS^m\times F^m}
\bigg\|\Gamma(\varphi)\bigg(\prod_{k=1}^j\alpha_{s_k}(h_k)\bigg)-
\varphi\bigg(\prod_{k=1}^j\alpha_{s_k}(h_k)\bigg)\bigg\|_2\\
&\hspace*{15mm} \ +
\bigg\|\varphi\bigg(\prod_{k=1}^j\alpha_{s_k}(h_k)\bigg)-\prod_{k=1}^j\varphi(\alpha_{s_k}(h_k))\bigg\|_2\\
&\hspace*{15mm} \ +
\bigg\|\prod_{k=1}^j\varphi(\alpha_{s_k}(h_k))-\prod_{k=1}^j\Gamma(\varphi)(\alpha_{s_k}(h_k))\bigg\|_2\\
&\overset{\eqref{E-bound2}}< \frac{\delta}{3m}+\eta'+
%\sum_{(h,s)\in \cS^m\times F^m}
\sum_{k=1}^j\|\varphi(\alpha_{s_k}(h_k))-\Gamma(\varphi)(\alpha_{s_k}(h_k))\|_2\\
&\overset{\eqref{E-bound2}}< \frac{\delta}{3m}+\eta'+\frac{\delta}{3}<\delta.
\end{align*}
Also, for all $f\in \cS_{F, m}$ we have
\begin{align*}
%\sum_{f\in \cS_{F, m}}
|\zeta\circ \Gamma(\varphi)(f)-\mu(f)|
&\le
%\sum_{f\in \cS_{F, m}}
|\zeta\circ \Gamma(\varphi)(f)-\zeta\circ \varphi(f)|+|\zeta\circ \varphi(f)-\mu(f)|\\
&<
%\sum_{f\in \cS_{F, m}}
\|\Gamma(\varphi)(f)-\varphi(f)\|_2+\eta'\\
&\overset{\eqref{E-bound2}}< \frac{\delta}{3m}+\eta'<\delta.\\
\end{align*}
%Since $e\in F$,
Furthermore, for all $s\in F$ and $f\in \{p_1, \dots, p_m\}$ we have, since $e\in F$ and $F\subseteq E$,
\begin{align*}
%\sum_{f\in \cS}
\|\Gamma(\varphi)\circ \alpha_s(f)-\sigma_s\circ \Gamma(\varphi)(f)\|_2
&\le
%\sum_{f\in \cS}
\|\Gamma(\varphi)\circ \alpha_s(f)-\varphi\circ \alpha_s(f)\|_2 \\
&\hspace*{15mm} \ +\|\varphi\circ \alpha_s(f)-\sigma_s\circ \varphi(f)\|_2\\
&\hspace*{15mm} \ +\|\sigma_s\circ \varphi(f)-\sigma_s\circ \Gamma(\varphi)(f)\|_2\\
&\overset{\eqref{E-bound2}}< \frac{\delta}{3m}+\eta'+\frac{\delta}{3m}<\delta.
\end{align*}
Therefore $\Gamma(\varphi)\in \UP_{\mu, \lambda}(\cS, F, m, \delta, \sigma)$.

Finally, since $e\in F$, we have
\begin{align*}
\rho_\cS(\varphi, \Gamma(\varphi))&=\sum_{j=1}^{\infty}\frac{1}{2^j}\|\varphi(p_j)-\Gamma(\varphi)(p_j)\|_2\\
&\le \sum_{j=1}^{m}\|\varphi(p_j)-\Gamma(\varphi)(p_j)\|_2+\frac{1}{2^{m-1}}\\
%&\overset{\eqref{E-bound1}}<\frac{\delta}{3}+\sum_{j=m+1}^{\infty}\frac{1}{2^j}\|\varphi(p_j)-\Gamma(\varphi)(p_j)\|_2\\
&\overset{\eqref{E-bound2}}< \frac{\delta}{3}+\frac{1}{2^{m-1}}<\frac{\varepsilon}{4},
\end{align*}
as desired.
\end{proof}

We now show that all dynamically generating sequences in the unit ball of $L^\infty_\Rb (X,\mu )$
have the same entropy. This is the counterpart of Theorem~2.1 in \cite{Bowen10},
of which it provides another proof in conjunction with Proposition~\ref{P-infinite prime} in the next section.

\begin{theorem}\label{T-gen comparison}
Let $\cS=\{p_n\}^{\infty}_{n=1}$ and $\cT=\{q_n\}^{\infty}_{n=1}$ be dynamically generating sequences in the unit ball of $L^\infty (X,\mu )$.
Then $h_{\Sigma ,\mu} (\cT ) = h_{\Sigma ,\mu} (\cS )$.
\end{theorem}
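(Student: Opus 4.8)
\emph{Plan.} I would prove $h_{\Sigma,\mu}(\cT)\le h_{\Sigma,\mu}(\cS)$; the reverse inequality then follows by interchanging $\cS$ and $\cT$, since both are dynamically generating. By Remark~\ref{R-pou} we may assume $p_1=q_1=1$, and we fix some $\lambda>1$. By Proposition~\ref{P-bounded} applied to $\cS$, it suffices to show that for each $\varepsilon>0$ there is an $\varepsilon_2>0$ with
\[
h_{\Sigma,\mu}^\varepsilon(\cT)\ \le\ \inf_{F,m,\delta}\ \limsup_{i\to\infty}\ \frac{1}{d_i}\log N_{\varepsilon_2}\big(\UP_{\mu,\lambda}(\cS,F,m,\delta,\sigma_i),\rho_\cS\big),
\]
the infimum running over all nonempty finite $F\subseteq G$, all $m\in\Nb$ and all $\delta>0$; taking the supremum over $\varepsilon$ and invoking Proposition~\ref{P-bounded} once more gives $h_{\Sigma,\mu}(\cT)\le h_{\Sigma,\mu}(\cS)$.

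The first step is a comparison of the pseudometrics $\rho_\cS$ and $\rho_\cT$ on the $\lambda$-bounded maps associated to $\cS$. Given $\varepsilon$, fix $N_0$ with $2^{-(N_0-1)}<\varepsilon/4$. Since $\cS$ is dynamically generating, Kaplansky's density theorem provides a finite $F_0\subseteq G$, an $m_0\in\Nb$ and elements $\widetilde q_n\in\spn(\cS_{F_0,m_0})$ for $1\le n\le N_0$ with $\|\widetilde q_n\|_\infty\le1$ and $\|q_n-\widetilde q_n\|_2$ as small as desired. For $\chi\in\UP_{\mu,\lambda}(\cS,F,m,\delta,\sigma)$ with $F\supseteq F_0$, $m\ge m_0$ and $\delta$ small, the $\lambda$-bound controls $\|\chi(q_n)-\chi(\widetilde q_n)\|_2$, while conditions (i) and (iii) of Definition~\ref{D-UP} express $\chi(\widetilde q_n)$, up to a small $L^2$-error, as a fixed expression in $\sigma$ and $\chi(p_1),\dots,\chi(p_{m_0})$ alone. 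Hence there are thresholds $F_0,m_0,\delta_0$ and an $\varepsilon_2>0$, depending only on $\varepsilon$ and on $\cS,\cT$, such that on $\UP_{\mu,\lambda}(\cS,F,m,\delta,\sigma)$ with $F\supseteq F_0$, $m\ge m_0$, $\delta\le\delta_0$ one has $\rho_\cS(\chi,\chi')<\varepsilon_2\Rightarrow\rho_\cT(\chi,\chi')<\varepsilon/2$. Since the right-hand side of the displayed inequality is nonincreasing in $(F,m,\delta)$, it is enough to treat such $(F,m,\delta)$.

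The main step is to build, for such $(F,m,\delta)$ and any $\sigma\colon G\to\Sym(d)$, a map $\Gamma\colon\UP_\mu(\cT,E,4\ell,\eta',\sigma)\to\UP_{\mu,\lambda}(\cS,F,m,\delta,\sigma)$ for suitable finite $E\subseteq G$, integer $\ell\ge m$ and $\eta'>0$, with $\rho_\cT(\varphi,\Gamma(\varphi))<\varepsilon/8$. This mimics the proof of Proposition~\ref{P-bounded}: choose a finite partition of unity $\cQ$ of projections in $L^\infty(X,\mu)$ such that $\Eb(\,\cdot\,|\cQ)$ approximates in $\|\cdot\|_\infty$ every element of $\cS_{F,m}\cup\{q_1,\dots,q_{N_0}\}$; using that $\cT$ is dynamically generating, approximate each $q\in\cQ$ in $\|\cdot\|_2$ by $\widetilde q\in\spn(\cT_{E,\ell})$ with $\|\widetilde q\|_\infty\le1$, set $q'=\widetilde q\,\overline{\widetilde q}\in\spn(\cT_{E,2\ell})$, and let $\theta\colon\spn(\cQ)\to L^\infty(X,\mu)$ be the positive map $q\mapsto q'$. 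As in Proposition~\ref{P-bounded}, any $\varphi\in\UP_\mu(\cT,E,4\ell,\eta',\sigma)$ is automatically $L^2$-bounded by $\lambda_1=\min(2,\lambda^{1/3})$ on $\spn(\cT_{E,2\ell})$, so $\varphi\circ\theta$ is $L^2$-bounded on $\spn(\cQ)$ and $\sum_{q\in\cQ}\varphi\circ\theta(q)\approx1$; renormalizing $\varphi\circ\theta$ over a large subset $J\subseteq\{1,\dots,d\}$ produces a unital positive $\widetilde\varphi\colon\spn(\cQ)\to\Cb^d$, and I set $\Gamma(\varphi)=\widetilde\varphi\circ\Eb(\,\cdot\,|\cQ)$. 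For the approximation parameters small enough, $\Gamma(\varphi)$ is unital, positive and $L^2$-bounded by $\lambda$; it satisfies conditions (i)--(iii) of Definition~\ref{D-UP} for $(\cS,F,m,\delta)$ because $\cQ$ refines $\cS_{F,m}$ (so $\Eb(\,\cdot\,|\cQ)$ is approximately multiplicative on the relevant $\cS$-words) and because the estimates of Proposition~\ref{P-bounded} transport the approximate multiplicativity, equivariance and near measure-preservation of $\varphi$ through $\theta$ and the renormalization; and $\rho_\cT(\varphi,\Gamma(\varphi))<\varepsilon/8$ because $\cQ$ refines $\{q_1,\dots,q_{N_0}\}$ and $2^{-(N_0-1)}<\varepsilon/4$.

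Finally, if $\varphi,\psi\in\UP_\mu(\cT,E,4\ell,\eta',\sigma)$ satisfy $\rho_\cT(\varphi,\psi)\ge\varepsilon$, then $\rho_\cT(\Gamma(\varphi),\Gamma(\psi))>\varepsilon/2$, hence $\rho_\cS(\Gamma(\varphi),\Gamma(\psi))\ge\varepsilon_2$ by the first step; so $\Gamma$ sends $\varepsilon$-separated subsets for $\rho_\cT$ to $\varepsilon_2$-separated subsets for $\rho_\cS$ of the same cardinality, giving $N_\varepsilon(\UP_\mu(\cT,E,4\ell,\eta',\sigma),\rho_\cT)\le N_{\varepsilon_2}(\UP_{\mu,\lambda}(\cS,F,m,\delta,\sigma),\rho_\cS)$. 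Dividing by $d_i$, taking logarithms and $\limsup_{i\to\infty}$, and using $h_{\Sigma,\mu}^\varepsilon(\cT)\le h_{\Sigma,\mu}^\varepsilon(\cT,E,4\ell,\eta')$ from Definition~\ref{D-entropy}, we obtain the displayed bound, and hence the theorem. The part I expect to be most demanding is the construction of $\Gamma$: like the proof of Proposition~\ref{P-bounded} it is a multi-parameter perturbation argument, now made more delicate by having to choose the partition $\cQ$ fine enough with respect to $\cS$ while keeping its atoms approximable by short $\cT$-words, and by tracking the dependence of the various small parameters on $F,m,\delta$ and $\varepsilon$; the pseudometric comparison of the first step, a routine consequence of approximate multiplicativity and the $\lambda$-bound, should be comparatively straightforward.
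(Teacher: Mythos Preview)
Your overall strategy is reasonable, but the paper's approach is both simpler and avoids a genuine difficulty in your plan.

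The paper applies Proposition~\ref{P-bounded} to \emph{both} $\cS$ and $\cT$, reducing to the inequality
\[
\inf_{F,m,\delta}\limsup_i\frac{1}{d_i}\log N_\varepsilon\big(\UP_{\mu,2}(\cT,F,m,\delta,\sigma_i),\rho_\cT\big)
\le
\inf_{F,m,\delta}\limsup_i\frac{1}{d_i}\log N_{\varepsilon'}\big(\UP_{\mu,2}(\cS,F,m,\delta,\sigma_i),\rho_\cS\big).
\]
Once the maps on both sides carry a \emph{global} $L^2$-bound (by $2$), there is no need to build a new map $\Gamma(\varphi)$: one shows directly that $\UP_{\mu,2}(\cT,FD,mn,\delta',\sigma)\subseteq\UP_{\mu,2}(\cS,F,m,\delta,\sigma)$. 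The argument is a straightforward approximation of each $f\in\cS_{F,m}$ by a linear combination $f'\in\spn(\cT_{D,n})$ with $\|f'\|_\infty\le1$; the global $L^2$-bound on $\varphi$ then lets one pass from $\varphi(f)$ to $\varphi(f')$, after which the $\cT$-conditions on $\varphi$ give the $\cS$-conditions. The pseudometric comparison is handled the same way, using the reverse approximation of $q_j$ by $\cS$-words.

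Your plan applies Proposition~\ref{P-bounded} only to $\cS$ and then tries to manufacture a map $\Gamma:\UP_\mu(\cT,\ldots)\to\UP_{\mu,\lambda}(\cS,\ldots)$ by re-running the conditional-expectation machinery of Proposition~\ref{P-bounded}. The difficulty is in the sentence ``the estimates of Proposition~\ref{P-bounded} transport the approximate multiplicativity, equivariance and near measure-preservation of $\varphi$ through $\theta$ and the renormalization.'' In the proof of Proposition~\ref{P-bounded}, that transport works via the key estimate $\|\varphi(f)-\Gamma(\varphi)(f)\|_2<\delta/(3m)$ for $f\in\cS_{F,m}$, which in turn uses that $f\in\spn(\cS_{E,2\ell})$ together with the $\lambda_1$-bound of $\varphi$ on that span. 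In your setup, $\varphi\in\UP_\mu(\cT,E,4\ell,\eta',\sigma)$, so the $\lambda_1$-bound you obtain holds only on $\spn(\cT_{E,2\ell})$; the elements $f\in\cS_{F,m}$ need not lie there, and you have no control of $\varphi(f)$. More fundamentally, $\varphi$ carries no $\cS$-conditions at all, so there is nothing to ``transport''; verifying the $\cS$-conditions for $\Gamma(\varphi)$ from scratch requires expanding $\theta(\Eb(\alpha_s p_j\,|\,\cQ))$ in $\cT$-words and invoking the $\cT$-equivariance and $\cT$-multiplicativity of $\varphi$ term by term, with explicit control on the $\ell^1$-norm of the coefficients of the $\tilde q$'s. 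This is precisely the content of the paper's inclusion argument, now buried under an extra layer of renormalization. If you insist on your route it can be made to work, but you will have to add an $\cS$-by-$\cT$ approximation step with explicit coefficient bounds; it is cleaner to apply Proposition~\ref{P-bounded} to $\cT$ as well and prove the inclusion directly.
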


\begin{proof}
It suffices by symmetry to prove that $h_{\Sigma, \mu} (\cT ) \leq h_{\Sigma, \mu} (\cS )$.
By Remark~\ref{R-pou}, we may assume that $p_1=q_1=1$.

Let $\varepsilon>0$. Take $R\in \Nb$ with $2^{-(R-1)}<\varepsilon/3$.
Since $\cS$ is dynamically generating and $p_1=1$, by Kaplansky's density theorem \cite[Thm.\ 5.3.5]{KadRinI83}
there is a nonempty finite set $E\subseteq G$ and an
$\ell\in\Nb$ such that for each $q\in\{q_1, \dots, q_R\}$ there exist $d_{q,g} \in\Cb$ for $g\in \cS_{E, \ell}$ such that
the function
\[
q' = \sum_{g\in \cS_{E, \ell}} d_{q, g}g
%\sum_{(h,s)\in \{p_1, \dots, p_{\ell}\}^\ell \times E^\ell} d_{q,h,s}
%\alpha_{s_1} (h_1 ) \cdots \alpha_{s_\ell} (h_\ell )
\]
satisfies $\| q - q' \|_2 < (12R)^{-1}\varepsilon$.
%where $\{p_1, \dots, p_{\ell}\}^\ell$ is interpreted as
%$\{p_1, \dots, p_\ell\}^{\{ 1,\dots ,\ell\}}$ and $E^\ell$ likewise.
Set $M=\max_{1\le j\le R}\max_{g\in \cS_{E, \ell}} | d_{q_j,g} |$
%$M = \max_{1\le j\le R}\max_{(h,s)\in \{p_1, \dots, p_\ell\}^\ell \times E^\ell} | d_{q_j,h,s} |$
and $\varepsilon'=\varepsilon/(2^{\ell+4}M R \ell^{\ell+1} |E|^\ell )$.
We will show that
%$h^{\varepsilon}_{\Sigma, \mu} (\cT ) \leq h^{\varepsilon'}_{\Sigma, \mu} (\cS )$.
\begin{align} \label{E-gen comparison}
\lefteqn{\inf_F\inf_{m\in \Nb}\inf_{\delta>0}\limsup_{i\to \infty}\frac{1}{d_i}\log N_{\varepsilon}
(\UP_{\mu, 2}(\cT, F, m, \delta, \sigma_i), \rho_\cT)} \hspace*{10mm}\\
\nonumber &\hspace*{10mm} \le \inf_F\inf_{m\in \Nb}\inf_{\delta>0}\limsup_{i\to \infty}\frac{1}{d_i}
\log N_{\varepsilon'}(\UP_{\mu, 2}(\cS, F, m, \delta, \sigma_i), \rho_\cS),
\end{align}
where $F$ ranges over all nonempty finite subsets of $G$.
Since $\varepsilon$ is an arbitrary
positive number, by Proposition~\ref{P-bounded} this will imply $ h_{\Sigma, \mu} (\cT ) \leq h_{\Sigma, \mu} (\cS )$.

Let $F$ be a nonempty finite subset of $G$ containing $e$ and $E$, $m$ a positive integer with $m\ge \ell$,
and $\delta\in (0,\varepsilon' ]$.

As $\cT$ is dynamically generating and $q_1=1$, by Kaplansky's density theorem \cite[Thm.\ 5.3.5]{KadRinI83}
there are a nonempty finite set $D\subseteq G$ and an $n\in\Nb$ such that
%for each $p\in\{p_1,\dots, p_m\}$ there exist
%$c_{p,h,s} \in\Cb$ for which the function
%\[
%p' = \sum_{(h,s)\in \{q_1, \dots, q_n\}^n \times D^n} c_{p,h,s}
%\alpha_{s_1} (h_1 ) \cdots \alpha_{s_n} (h_n )
%\]
%satisfies $\|p'\|_{\infty}\le 1$ and $\| p - p' \|_2 < (6m|\cS |^m|F|^m)^{-1} \delta$, and
for each $f\in\cS_{F, m}$ there exist
$c_{f, g} \in\Cb$ for $g\in \cT_{D, n}$ such that  the function
\[
f' = \sum_{g\in \cT_{D, n}} c_{f,g}g
%\alpha_{s_1} (h_1 ) \cdots \alpha_{s_n} (h_n )
\]
satisfies $\|f'\|_{\infty}\le 1$ and $\| f - f' \|_2 < \delta/(6m)$.
%where as above $\{q_1, \dots, q_n\}^n$ is interpreted as $\cT^{\{ 1,\dots ,n\}}$
%and $D^n$ likewise.
Set
%$M_1=\max_{p\in \cS}\max_{(h,s)\in \{q_1, \dots, q_n\}^n \times D^n} | c_{p,h,s} |$
%and
$M_1=\max_{f\in \cS_{F, m}}\max_{g\in \cT_{D, n}} | c_{f, g} |$.

Take
a $\delta' > 0$ such that
$\max((m+1)n^{mn}|D|^{mn}M_1^m\delta',(2+3n)n^n|D|^nM_1\delta')
<\delta/3$.
%$|\cT|^n|D|^nM_1\delta' < \delta/2$,
%and
%$(2+3n)n^n|D|^nM_1\delta' < \delta/3$.
We will show that
\begin{align} \label{E-gen comparison1}
\lefteqn{\limsup_{i\to \infty}\frac{1}{d_i}\log N_{\varepsilon}(\UP_{\mu, 2}(\cT, FD, mn, \delta'), \rho_\cT)}
\hspace*{15mm} \\
\nonumber &\hspace*{15mm} \leq \limsup_{i\to \infty}\frac{1}{d_i}
\log N_{\varepsilon'}(\UP_{\mu, 2}(\cS, F, m, \delta), \rho_\cS).
\end{align}
Since $F$ can be chosen to contain an arbitrary finite subset of $G$, $m$ can be arbitrarily large,
and $\delta$ can be arbitrarily small, this implies \eqref{E-gen comparison}.
%$h^{\varepsilon}_{\Sigma, \mu} (\cT ) \leq h^{\varepsilon'}_{\Sigma, \mu} (\cS )$.

Let $\sigma$ be a map from $G$ to $\Sym (d)$ for some $d\in\Nb$, which we assume to be
a good enough sofic approximation for our purposes below.
Let $\varphi\in\UP_{\mu, 2} (\cT ,FD,mn, \delta', \sigma )$.
We will show that $\varphi \in\UP_{\mu, 2} (\cS ,F,m, \delta , \sigma )$.

Let $(f,v)\in \{p_1, \dots, p_m\}^m\times F^m$.  Using that
$\|\varphi(\alpha_{v_k}(f_k))\|_{\infty}\le \|\alpha_{v_k}(f_k)\|_{\infty}\le 1$, $f_k\in \cS_{F, m}$,
and $\|\varphi(\alpha_{v_k}(f'_k))\|_{\infty}\le \|\alpha_{v_k}(f'_k)\|_{\infty}\le 1$
for each $k=1,\dots ,m$, and that $\varphi$ has norm at most $2$ with respect to the $L^2$-norms,
we have
\begin{align*}
\bigg\|\varphi\bigg(\prod_{k=1}^m\alpha_{v_k}(f_k)\bigg)-\varphi\bigg(\prod_{k=1}^m\alpha_{v_k}(f'_k)\bigg)\bigg\|_2
&\le 2\bigg\|\prod_{k=1}^m\alpha_{v_k}(f_k)-\prod_{k=1}^m\alpha_{v_k}(f'_k)\bigg\|_2\\
&\le 2\sum_{k=1}^m\|\alpha_{v_k}(f_k)-\alpha_{v_k}(f'_k)\|_2 \\
&=2\sum_{k=1}^m\|f_k-f'_k\|_2<\frac{\delta}{3}
\end{align*}
and
\begin{align*}
\bigg\|\prod_{k=1}^m\varphi(\alpha_{v_k}(f'_k))-\prod_{k=1}^m\varphi(\alpha_{v_k}(f_k))\bigg\|_2
&\le \sum_{k=1}^m\|\varphi(\alpha_{v_k}(f'_k))-\varphi(\alpha_{v_k}(f_k))\|_2\\
&\le 2\sum_{k=1}^m\|\alpha_{v_k}(f'_k)-\alpha_{v_k}(f_k)\|_2 \\
&=2\sum_{k=1}^m\|f'_k-f_k\|_2<\frac{\delta}{3}.
\end{align*}
For any $(h_1, s_1), \dots, (h_m, s_m)\in \{q_1, \dots, q_n\}^n\times D^n$ we have
\begin{align*}
\lefteqn{\bigg\|\varphi\bigg(\prod_{k=1}^m\alpha_{v_k}\bigg(\prod_{j=1}^n
\alpha_{s_{k, j}}(h_{k,j})\bigg)\bigg)-\prod_{k=1}^m\varphi\bigg(\alpha_{v_k}\bigg(\prod_{j=1}^n
\alpha_{s_{k, j}}(h_{k,j})\bigg)\bigg)\bigg\|_2} \hspace*{15mm} \\
%&=&\bigg\|\varphi\bigg(\prod_{k=1}^m\prod_{j=1}^n\alpha_{v_ks_{k, j}}(h_{k,j})\bigg)-\prod_{k=1}^m\varphi\bigg(\prod_{j=1}^n\alpha_{v_ks_{k, %j}}(h_{k,j})\bigg)\bigg\|_2\\
\hspace*{15mm} &\leq \bigg\|\varphi\bigg(\prod_{k=1}^m\prod_{j=1}^n\alpha_{v_ks_{k, j}}(h_{k,j})\bigg)-
\prod_{k=1}^m\prod_{j=1}^n\varphi(\alpha_{v_ks_{k, j}}(h_{k,j}))\bigg\|_2\\
&\hspace*{20mm} \ +\bigg\|\prod_{k=1}^m\prod_{j=1}^n
\varphi(\alpha_{v_ks_{k, j}}(h_{k,j}))-\prod_{k=1}^m\varphi\bigg(\prod_{j=1}^n
\alpha_{v_ks_{k, j}}(h_{k,j})\bigg)\bigg\|_2\\
&\leq \delta'+\sum_{k=1}^m\bigg\|\prod_{j=1}^n
\varphi(\alpha_{v_ks_{k, j}}(h_{k,j}))-\varphi\bigg(\prod_{j=1}^n\alpha_{v_ks_{k, j}}(h_{k,j})\bigg)\bigg\|_2\\
&\leq (m+1)\delta',
\end{align*}
and hence
\begin{align*}
\lefteqn{\bigg\|\varphi\bigg(\prod_{k=1}^m\alpha_{v_k}(f'_k)\bigg)-\prod_{k=1}^m\varphi(\alpha_{v_k}(f'_k))\bigg\|_2}
\hspace*{10mm}\\
\hspace*{10mm} &\le \sum_{g_1\in \cT_{D, n}}\dots \sum_{g_m\in \cT_{D, n}}
\bigg(\prod_{k=1}^m|c_{f_k, g_k}|\bigg)\bigg\|\varphi\bigg(\prod_{k=1}^m\alpha_{v_k}(g_k)\bigg)
-\prod_{k=1}^m\varphi(\alpha_{v_k}(g_k))\bigg\|_2\\
&\leq M_1^m \sum_{g_1\in \cT_{D, n}}\dots \sum_{g_m\in \cT_{D, n}}
\bigg\|\varphi\bigg(\prod_{k=1}^m\alpha_{v_k}(g_k)\bigg)
-\prod_{k=1}^m\varphi(\alpha_{v_k}(g_k))\bigg\|_2\\
%\sum_{(h_1,s_1)\in \cT^n\times D^n}\dots \sum_{(h_m,s_m)\in \cT^n\times D^n}M_1^m\\
%%\bigg(\prod_{k=1}^m|c_{f_k, h_k, s_k}|\bigg)\\
%& &\bigg\|\varphi\bigg(\prod_{k=1}^m\prod_{j=1}^n\alpha_{v_ks_{k, j}}(h_{k,j})\bigg)-\prod_{k=1}^m\varphi\bigg(\prod_{j=1}^n\alpha_{v_ks_{k, %j}}(h_{k,j})\bigg)\bigg\|_2\\
&\leq (m+1)n^{mn}|D|^{mn}M_1^m\delta'
%\sum_{(h_1,s_1)\in \cT^n\times D^n}\dots \sum_{(h_m,s_m)\in \cT^n\times D^n}\bigg(\prod_{k=1}^m|c_{f_k, h_k, s_k}|\bigg)\\
\leq \frac{\delta}{3}.
\end{align*}
Therefore
\begin{align*}
%\sum_{(g, v)\in \cS^m\times F^m}
\bigg\|\varphi\bigg(\prod_{k=1}^m\alpha_{v_k}(f_k)\bigg)-\prod_{k=1}^m\varphi(\alpha_{v_k}(f_k))\bigg\|_2
&\leq
%\sum_{(g, v)\in \cS^m\timesF^m}
\bigg\|\varphi\bigg(\prod_{k=1}^m\alpha_{v_k}(f_k)\bigg)-\varphi\bigg(\prod_{k=1}^m\alpha_{v_k}(f'_k)\bigg)\bigg\|_2\\
&\hspace*{5mm}\ +\bigg\|\varphi\bigg(\prod_{k=1}^m\alpha_{v_k}(f'_k)\bigg)-
\prod_{k=1}^m\varphi(\alpha_{v_k}(f'_k))\bigg\|_2 \\
&\hspace*{5mm}\ +\bigg\|\prod_{k=1}^m\varphi(\alpha_{v_k}(f'_k))-\prod_{k=1}^m\varphi(\alpha_{v_k}(f_k))\bigg\|_2\\
&< \frac{\delta}{3}+\frac{\delta}{3}+\frac{\delta}{3}=\delta.
\end{align*}

Given an $f\in \cS_{F, m}$, since $q_1=1$ and $e\in F$, we have
\begin{align*}
|\zeta\circ \varphi(f')-\mu(f')| &\le \sum_{g\in \cT_{D, n}}|c_{f, g}|\cdot |\zeta\circ \varphi(g)-
%\bigg(\prod_{k=1}^n \alpha_{s_k} (h_k )\bigg)-
%\mu\bigg(\prod_{k=1}^n \alpha_{s_k} (h_k )\bigg)|\\
\mu(g)|\\
&\le M_1 \sum_{g\in \cT_{D, n}}|\zeta\circ \varphi(g)-\mu(g)|\\
%\delta' \sum_{(h, s)\in \cT^n\times D^n}|w_{f, h, s}|\\
&\leq n^n|D|^nM_1\delta'<\frac{\delta}{2},
\end{align*}
and thus, using that $\varphi$ has norm at most $2$ with respect to the $L^2$-norms,
\begin{align*}
%\sum_{f\in \cS_{F, m}}
|\zeta\circ \varphi(f)-\mu(f)|
&\le
%\sum_{f\in \cS_{F, m}}(
|\zeta \circ \varphi(f)-\zeta\circ \varphi(f')|+|\zeta\circ \varphi(f')-\mu(f')|+|\mu(f')-\mu(f)|\\
&<
%\sum_{f\in \cS_{F, m}}(
\|\varphi(f)-\varphi(f')\|_2+\frac{\delta}{2}+\|f-f'\|_2\\
&\le
%3\sum_{f\in \cS_{F, m}}c
3\|f-f'\|_2+\frac{\delta}{2}< \delta.
\end{align*}

Let $t\in F$.  For $(h,s)\in \{q_1, \dots, q_n\}^n \times D^n$ we have,
using the almost multiplicativity of $\varphi$ and our assumption that $F$ contains $e$,
\begin{align*}
\lefteqn{\bigg\| \varphi\circ\alpha_t \bigg(\prod_{k=1}^n \alpha_{s_k} (h_k )\bigg)
- \sigma_t \circ\varphi \bigg(\prod_{k=1}^n \alpha_{s_k} (h_k )\bigg) \bigg\|_2}\hspace*{20mm} \\
\hspace*{20mm} &\le \bigg\| \varphi\bigg(\prod_{k=1}^n \alpha_{ts_k} (h_k )\bigg)-
\prod_{k=1}^n \varphi(\alpha_{ts_k} (h_k ))\bigg\|_2\\
&\hspace*{15mm}\ +\bigg\| \prod_{k=1}^n \varphi\circ\alpha_t (\alpha_{s_k} (h_k ))
- \prod_{k=1}^n \sigma_t \circ\varphi (\alpha_{s_k} (h_k )) \bigg\|_2 \\
&\hspace*{15mm}\ + \bigg\|\sigma_t \bigg(\prod_{k=1}^n \varphi (\alpha_{s_k} (h_k ))\bigg)-
\sigma_t \circ\varphi \bigg(\prod_{k=1}^n \alpha_{s_k} (h_k )\bigg) \bigg\|_2 \\
&\leq \delta'+\sum_{k=1}^n \| \varphi \circ\alpha_t (\alpha_{s_k} (h_k ))
- \sigma_t \circ\varphi (\alpha_{s_k} (h_k )) \|_2 +\delta'\\
&\leq 2\delta'+\sum_{k=1}^n \big( \| \varphi\circ\alpha_{ts_k} (h_{s_k} )
- \sigma_{ts_k} \circ\varphi (h_{s_k} ) \|_2 \\
&\hspace*{25mm}\ + \| (\sigma_{ts_k} - \sigma_t \circ\sigma_{s_k} )(\varphi (h_{s_k} )) \|_2 \\
&\hspace*{25mm}\ +
\| \sigma_t (\sigma_{s_k} \circ\varphi (h_{s_k} ) - \varphi\circ\alpha_{s_k} (h_{s_k} )) \|_2 \big) \\
&\leq (2+3n)\delta'
\end{align*}
assuming that $\sigma$ is a good enough sofic approximation.
Thus given a $p\in\{p_1, \dots, p_m\}$, since $p\in \cS_{F, m}$ and $q_1=1$ we have
\begin{align*}
\| \varphi\circ\alpha_t (p' ) - \sigma_t \circ\varphi (p' ) \|_2
&\leq \sum_{g\in \cT_{D, n}} |c_{p, g}| \cdot \|\varphi\circ\alpha_t(g)-\sigma_t \circ\varphi(g)\|_2\\
%\sum_{(h,s)\in \{q_1, \dots, q_n\}^n \times D^n} | c_{p,h, s} |
%\bigg\| \varphi\circ\alpha_t \bigg(\prod_{k=1}^n \alpha_{s_k} (h_k )\bigg)
%- \sigma_t \circ\varphi \bigg(\prod_{k=1}^n \alpha_{s_k} (h_k )\bigg) \bigg\|_2 \\
&\le M_1\sum_{g\in \cT_{D, n}} \|\varphi\circ\alpha_t(g)-\sigma_t \circ\varphi(g)\|_2\\
%&< (2+3n)\delta' \sum_{(h,s)\in \cT^n \times D^n} | c_{p,h,s} | \\
&\leq n^n|D|^nM_1(2+3n)\delta'<\frac{\delta}{3},
\end{align*}
and hence, using that $\varphi$ has norm at most $2$ with respect to the $L^2$-norms,
\begin{align*}
\| \varphi\circ\alpha_t (p) - \sigma_t \circ\varphi (p) \|_2
&\leq  \| \varphi\circ\alpha_t (p-p' ) \|_2 + \| \varphi\circ\alpha_t (p') - \sigma_t \circ\varphi (p') \|_2 \\
&\hspace*{15mm} \ + \| \sigma_t \circ\varphi (p'- p) \|_2 \\
&< 4\| p-p' \|_2 + \frac{\delta}{3} \\
&< \frac{2\delta}{3} + \frac{\delta}{3} = \delta .
\end{align*}
Therefore
$\varphi\in\UP_{\mu, 2}(\cS ,F, m, \delta , \sigma )$, and so
$\UP_{\mu, 2}(\cT ,FD,mn, \delta' , \sigma )\subseteq\UP_{\mu, 2} (\cS ,F, m, \delta , \sigma )$.

Let $\varphi$ and $\psi$ be elements of $\UP_{\mu, 2}(\cT ,FD,mn, \delta' , \sigma )$ such that
$\rho_\cS (\varphi , \psi ) \le \varepsilon'$.
Then for $(h,s)\in \{p_1, \dots, p_\ell\}^\ell \times F^\ell$ we have
\begin{align*}
\lefteqn{\bigg\| \varphi \bigg( \prod_{k=1}^\ell \alpha_{s_k} (h_k )\bigg)
- \psi \bigg( \prod_{k=1}^\ell \alpha_{s_k} (h_k )\bigg) \bigg\|_2}\hspace*{10mm} \\
\hspace*{10mm} &\leq \bigg\|\varphi \bigg( \prod_{k=1}^\ell \alpha_{s_k} (h_k )\bigg)-
\prod_{k=1}^\ell \varphi (\alpha_{s_k} (h_{s_k} ))\bigg\|_2 \\
&\hspace*{20mm} \ + \bigg\| \prod_{k=1}^\ell \varphi (\alpha_{s_k} (h_{s_k} ))
- \prod_{k=1}^\ell \psi (\alpha_{s_k} (h_{s_k} )) \bigg\|_2  \\
&\hspace*{20mm} \ +\bigg\|\prod_{k=1}^\ell \psi (\alpha_{s_k} (h_{s_k} ))-
\psi \bigg( \prod_{k=1}^\ell \alpha_{s_k} (h_k )\bigg) \bigg\|_2\\
&\leq \delta +\sum_{k=1}^\ell \| \varphi (\alpha_{s_k} (h_{s_k} )) - \psi (\alpha_{s_k} (h_{s_k} )) \|_2 +\delta\\
&\leq 2\delta+\sum_{k=1}^\ell \big( \| \varphi (\alpha_{s_k} (h_{s_k} )) - \sigma_{s_k} (\varphi (h_{s_k} )) \|_2
+ \| \sigma_{s_k} (\varphi (h_{s_k} ) - \psi (h_{s_k} )) \|_2 \\
&\hspace*{40mm}\ + \| \sigma_{s_k} (\psi (h_{s_k} )) - \psi (\alpha_{s_k} (h_{s_k} )) \|_2 \big) \\
&\leq 2\delta+(2\delta + 2^\ell\varepsilon' )\ell \leq 2^{\ell+2}\varepsilon'\ell,
\end{align*}
so that for $q\in\{q_1, \dots, q_R\}$, using the fact that $F\supseteq E$ and $p_1=1$,
\begin{align*}
\| \varphi (q' ) - \psi (q' ) \|_2
&\leq
%\sum_{(h,s)\in\cS^\ell \times E^\ell} | d_{q,h,s} |
%\bigg\| \varphi \bigg( \prod_{k=1}^\ell \alpha_{s_k} (h_k )\bigg) - \psi \bigg( \prod_{k=1}^\ell \alpha_{s_k} (h_k )\bigg) \bigg\|_2 \\
\sum_{g\in \cS_{E, \ell}}|d_{q, g}|\cdot \|\varphi(g)-\psi(g)\|_2\\
&\leq M \sum_{g\in \cS_{E, \ell}}\|\varphi(g)-\psi(g)\|_2\\
%\sum_{(h,s)\in\cS^\ell \times E^\ell} \bigg\| \varphi \bigg( \prod_{k=1}^\ell \alpha_{s_k} (h_k )\bigg)
%- \psi \bigg( \prod_{k=1}^\ell \alpha_{s_k} (h_k )\bigg) \bigg\|_2 \\
&\leq M\ell^\ell |E|^\ell 2^{\ell+2}\varepsilon' \ell \\
&= \frac{\varepsilon}{4R} .
\end{align*}
Since $\varphi$ and $\psi$ have norms at most $2$ with respect to the $L^2$-norms, we thus obtain
\begin{align*}
\rho_\cT (\varphi , \psi ) &= \sum_{j=1}^{\infty} \frac{1}{2^j}\| \varphi (q_j) - \psi (q_j) \|_2 \\
&\le \sum_{j=1}^{R} \| \varphi (q_j) - \psi (q_j) \|_2 +2^{-(R-1)}\\
&\leq \sum_{j=1}^{R} \big( \| \varphi (q_j - q'_j ) \|_2
+ \| \varphi (q'_j ) - \psi (q'_j ) \|_2 + \| \psi (q'_j - q_j) \|_2 \big) +2^{-(R-1)}\\
&\leq 4\sum_{j=1}^{R} \| q_j - q'_j \|_2 + \frac{\varepsilon}{4} +2^{-(R-1)}\\
%&\leq 2|\cT |\delta + \frac{\varepsilon}{2} \\
&< \frac{\varepsilon}{3} + \frac{\varepsilon}{4}+\frac{\varepsilon}{3}< \varepsilon .
\end{align*}
Thus any subset of $\UP_{\mu, 2}(\cT ,FD, mn, \delta' ,\sigma )$ which is $\varepsilon$-separated
with respect to $\rho_\cT$ is $\varepsilon'$-separated with respect to $\rho_\cS$,
and so
\begin{align*}
N_\varepsilon (\UP_{\mu, 2}(\cT ,FD, mn, \delta' ,\sigma ),\rho_\cT )\leq N_{\varepsilon'} (\UP_{\mu, 2}(\cS ,F, m, \delta , \sigma ),\rho_\cS ).
\end{align*}
Consequently \eqref{E-gen comparison1} holds,
%$h^{\varepsilon}_{\Sigma, \mu} (\cT, FD, mn, \delta') \leq h^{\varepsilon'}_{\Sigma, \mu} (\cS, F, m, \delta)$,
as desired.
\end{proof}

In view of Theorem~\ref{T-gen comparison} we can define the measure entropy
of our system with respect to $\Sigma$ as follows.

\begin{definition}\label{D-global}
The measure entropy $h_{\Sigma ,\mu} (X,G)$ of the system $(X,\mu ,G)$ with respect to $\Sigma$ is defined as
the common value of $h_{\Sigma ,\mu} (\cS )$ over all dynamically generating
sequences $\cS$ in the unit ball of $L^\infty_{\Rb} (X,\mu )$.
\end{definition}

It follows from Theorem~\ref{T-gen comparison}, or even directly from Definition~\ref{D-entropy},
that $h_{\Sigma ,\mu} (\cS )$ depends only on the image of $\cS$ as a function on $\Nb$.
We can thus define the entropy $h_{\Sigma ,\mu} (\cP )$ of a countable subset $\cP$ of the unit ball of
$L^\infty_\Rb (X,\mu )$ as the common value of $h_{\Sigma ,\mu} (\cS )$ over all sequences $\cS$
whose image as a function on $\Nb$ is equal to $\cP$.
%Our final goal in this section is to demonstrate in Proposition~\ref{P-Hom} that when $\cP$ is a finite
%partition of unity consisting of projections we can calculate $h_{\Sigma ,\mu} (\cP )$ by counting homomorphisms.
%This will enable us to show in Section~\ref{S-comparison} that our definition of measure entropy
%coincides with that of Bowen in the presence of a generating measurable partition with finite entropy.
For a finite partition of unity $\cP\subseteq L^\infty (X,\mu )$,
we do not need the sequential formalism to define $h_{\Sigma ,\mu} (\cP )$ and can proceed more
simply as follows. For a nonempty finite set $F\subseteq G$ and $m\in\Nb$, we write $\cP_{F,m}$ for the set of
all products of the form $\alpha_{s_1} (p_1 )\cdots \alpha_{s_j} (p_j )$ where $1\leq j\leq m$,
$p_1 , \dots p_j \in\cP$, and $s_1 , \dots ,s_j \in F$. We write $\cP_F$ for the set
of all products of the form $\prod_{s\in F} \alpha_s (p_s )$ for $p\in \cP^F$.
For a $d\in\Nb$ we define on the set of unital positive maps
from some unital self-adjoint linear subspace of $L^\infty (X,\mu )$ containing $\spn (\cP )$ to $\Cb^d$ the pseudometric
\begin{align*}
\rho_{\cP} (\varphi , \psi ) &= \max_{p\in\cP} \| \varphi (p) - \psi (p) \|_2 .
\end{align*}

\begin{definition}\label{D-finite}
Let $\sigma$ be a map from $G$ to $\Sym (d)$ for some $d\in\Nb$.
Let $F$ be a nonempty finite subset of $G$ and $\delta > 0$.
Let $\cP$ be a finite partition of unity in $L^\infty (X,\mu )$.
Define $\UP_\mu (\cP , F,m,\delta ,\sigma )$ to be the set of all unital positive linear maps
$\varphi : L^\infty (X,\mu ) \to \Cb^d$ such that
\begin{enumerate}
\item[(i)] $\| \varphi (\alpha_{s_1} (f_1 ) \cdots \alpha_{s_m} (f_m ))
- \varphi (\alpha_{s_1} (f_1 ))\cdots\varphi (\alpha_{s_m} (f_m )) \|_2 < \delta$
for all $f_1 , \dots ,f_m \in \cP$ and $s_1 , \dots ,s_m \in F$,

\item[(ii)] $| \zeta\circ\varphi (f) - \mu (f) | < \delta$ for all $f\in\cP_{F,m}$,

\item[(iii)] $\| \varphi\circ\alpha_s (f) - \sigma_s \circ\varphi (f) \|_2 < \delta$ for all $f\in\cP$ and $s\in F$.
\end{enumerate}
In the case of a finite partition of unity $\cP\subseteq L^\infty (X,\mu )$ consisting of projections we define
$\Hom_\mu (\cP , F,\delta ,\sigma )$ to be the set of all unital homomorphisms
$\varphi : \spn (\cP_F ) \to \Cb^d$ such that
\begin{enumerate}
\item[(i)] $| \zeta\circ\varphi (f) - \mu (f) | < \delta$ for all $f\in\cP_F$,

\item[(ii)] $\| \varphi\circ\alpha_s (f) - \sigma_s \circ\varphi (f) \|_2 < \delta$ for all $f\in\cP$ and $s\in F$.
\end{enumerate}
We define $h_\Sigma^\varepsilon (\cP ,F,m,\delta )$,
$h_\Sigma^\varepsilon (\cP ,F,m)$, $h_\Sigma^\varepsilon (\cP ,F)$, $h_\Sigma^\varepsilon (\cP )$, and
$h_\Sigma (\cP )$ by formally substituting $\cP$ for $\cS$ in Definition~\ref{D-entropy}.
\end{definition}

It is readily verified that $h_\Sigma (\cP )$ as defined above is equal to $h_\Sigma (\cS )$ for any sequence $\cS$
whose image as a function on $\Nb$ is equal to $\cP$, and so the notation $h_\Sigma (\cP )$ is unambiguous.

\section{Comparison with Bowen's measure entropy}\label{S-comparison}

Here we show that the measure entropy in Section~\ref{S-measure} agrees with that defined by Bowen in \cite{Bowen10}
when there exists a generating measurable partition with finite entropy.
Recall that the entropy $H_{\mu}(\cP)$ of a measurable partition $\cP$ of $X$ is defined as
$-\sum_{p\in\cP} \mu (p) \log \mu (p)$.

We write $\AP (\cP ,F,\delta ,\sigma )$ for the set of approximating ordered partitions as in \cite{Bowen10},
and $h_{\Sigma ,\mu}' (\cP , F, \delta )$, $h_{\Sigma ,\mu}' (\cP , F)$, and $h_{\Sigma ,\mu}' (\cP )$,
for the entropy quantities in \cite{Bowen10}. Bowen proved that the entropy $h_{\Sigma ,\mu}' (\cP )$
takes a common value over all generating measurable partitions $\cP$ of $X$ with $H_{\mu}(\cP) < +\infty$.
The entropy of the system with respect to $\Sigma$, which we will denote here by $h_{\Sigma ,\mu}' (X,G)$,
is defined as this common value
in the case that there exists a generating measurable partition $\cP$ of $X$ with $H_{\mu}(\cP) < +\infty$.
Other notation is carried over from the previous section. In particular, for a finite partition of
unity $\cP$ consisting of projections and
a nonempty finite set $F\subseteq G$, $\cP_F$ denotes the set of all products of the form
$\prod_{s\in F} \alpha_s (p_s )$ where $p_s \in\cP$ for each $s\in F$.

%As usual $\Sigma = \{ \sigma_i : G\to\Sym (d_i ) \}_{i=1}^\infty$ will be a fixed sofic approximation sequence.

\begin{lemma}\label{L-prime}
Let $\cP$ be a finite measurable partition of $X$ and $F$ a finite subset of $G$ containing $e$.
Then
\[
h_{\Sigma ,\mu}' (\cP ,F) = \inf_{\delta > 0} \limsup_{i\to\infty} \frac{1}{d_i} N_0 (\Hom_\mu (\cP , F,\delta ,\sigma_i ),\rho_{\cP})  .
\]
\end{lemma}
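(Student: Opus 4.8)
The plan is to show that Bowen's entropy quantity $h_{\Sigma,\mu}'(\cP,F)$, which counts (up to a normalization) the number of approximating ordered partitions in $\AP(\cP,F,\delta,\sigma_i)$, agrees with the count of unital homomorphisms in $\Hom_\mu(\cP,F,\delta,\sigma_i)$ modulo zero $\rho_\cP$-distance. The key observation is that a partition of unity $\cP$ consisting of projections in $L^\infty(X,\mu)$ spans a finite-dimensional commutative $C^*$-algebra $\spn(\cP) \cong \Cb^{|\cP|}$, and a unital homomorphism $\varphi : \spn(\cP_F) \to \Cb^d$ is determined by where it sends the minimal projections of $\spn(\cP_F)$; since its image is a commutative $C^*$-subalgebra of $\Cb^d$, such a homomorphism corresponds precisely to a map assigning to each point $k \in \{1,\dots,d\}$ a minimal projection of $\spn(\cP_F)$, i.e. an atom of the partition $\bigvee_{s\in F}\alpha_s\cP$ — equivalently an ordered $\cP_F$-name. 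This is exactly the combinatorial datum of an element of $\AP(\cP,F,\delta,\sigma)$.

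First I would make this dictionary precise: given $\varphi \in \Hom_\mu(\cP,F,\delta,\sigma)$, for each $k$ let $\omega(k) \in \cP^F$ be the unique tuple with $\varphi\big(\prod_{s\in F}\alpha_s(p_s)\big)(k) = 1$; conversely an ordered partition of $\{1,\dots,d\}$ indexed by $\cP^F$ determines such a $\varphi$. Then I would check that condition (i) of $\Hom_\mu$ ($|\zeta\circ\varphi(f)-\mu(f)|<\delta$ for $f\in\cP_F$) translates exactly into the statement that the pushforward of $\zeta$ under $\omega$ is within $\delta$ (in the appropriate sense) of the distribution of the partition $\bigvee_{s\in F}\alpha_s\cP$, and that condition (ii) ($\|\varphi\circ\alpha_s(f)-\sigma_s\circ\varphi(f)\|_2 < \delta$ for $f\in\cP$, $s\in F$) translates into the approximate equivariance condition on $\omega$ in Bowen's definition — here the $L^2$-norm $\|\cdot\|_2$ over $\Cb^d$ with respect to $\zeta$ is precisely $(d^{-1}\cdot\#\{\text{disagreements}\})^{1/2}$, which is the normalized Hamming-type quantity Bowen uses, so the $\delta$'s may need to be adjusted by squaring but this does not affect the $\inf_{\delta>0}$. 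The final point is that two homomorphisms $\varphi,\psi$ have $\rho_\cP(\varphi,\psi)=0$ iff $\varphi(p)=\psi(p)$ for all $p\in\cP$ iff the associated ordered partitions of $\{1,\dots,d\}$ by $\cP$ (not $\cP^F$) coincide, so $N_0(\Hom_\mu(\cP,F,\delta,\sigma_i),\rho_\cP)$ counts exactly the distinct $\cP$-labelled partitions arising, which is what $\AP$ counts after Bowen's identification; hence the $\limsup_i \frac{1}{d_i}\log(\cdot)$ (note the $\log$ should be present — I would follow the normalization in Definition~\ref{D-entropy}) of the two sides agree, and taking $\inf_{\delta>0}$ gives the claim.

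The main obstacle I expect is the careful bookkeeping of the two slightly different forms of the approximation conditions: Bowen's $\AP(\cP,F,\delta,\sigma)$ is phrased in terms of a single parameter controlling both the measure-matching on $\cP_F$ and the equivariance, whereas $\Hom_\mu$ uses an $L^2$-norm that is the square root of a Hamming distance, so one must verify that shrinking $\delta$ on one side can be absorbed by shrinking $\delta$ on the other (a standard $\delta \leftrightarrow \delta^2$ and constant-factor comparison), and that the restriction to $F \ni e$ ensures $\cP \subseteq \cP_{F}$ and $\cP \subseteq \cP_F$-refinements behave as expected. A secondary subtlety is ensuring the homomorphism property is automatic rather than an extra constraint: since $\varphi$ is defined only on the finite-dimensional $\spn(\cP_F)$ and is required a priori to be a unital homomorphism, there is no approximate-multiplicativity parameter as in Definition~\ref{D-UP}, which is precisely why the $m$ parameter drops out and we get the clean two-parameter form $\inf_{\delta>0}\limsup_i$; I would remark on this to justify why the statement has $h_{\Sigma,\mu}'(\cP,F)$ and not $h_{\Sigma,\mu}'(\cP,F,m)$ on the left.
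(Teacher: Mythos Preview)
Your intuition that unital homomorphisms $\spn(\cP_F)\to\Cb^d$ correspond to labelings of $\{1,\dots,d\}$ by atoms of $\cP_F$ is the right starting point, and the paper's proof does use this correspondence. However, your proposal has two genuine gaps that prevent the argument from going through as the clean bijection you describe.

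First, Bowen's condition defining $\AP(\cP,F,\delta,\sigma)$ is a \emph{single} summed measure-matching condition
\[
\sum_{r\in\cP^F}\bigg|\zeta\bigg(\prod_{s\in F}\sigma_s(q_{r(s)})\bigg)-\mu\bigg(\prod_{s\in F}\alpha_s(p_{r(s)})\bigg)\bigg|<\delta;
\]
there is no separate approximate-equivariance clause against which to match condition~(ii) of $\Hom_\mu$. In the ``$\geq$'' direction the paper therefore combines (i) and (ii) via a triangle inequality passing through $\prod_s\varphi\circ\alpha_s(p_{r(s)})$, using the multiplicativity of $\varphi$, and picks up a factor $|\cP|^{|F|}(|F|+1)$ in $\delta$. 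Your claim that the two conditions ``translate exactly'' to Bowen's is not correct.

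Second, and more seriously, the reverse direction does not yield a bijection. Given $\cQ\in\AP(\cP,F,\delta',\sigma)$, the natural candidate $\varphi\big(\prod_s\alpha_s(p_{\gamma(s)})\big)=\prod_s\sigma_s(q_{\gamma(s)})$ is not a well-defined homomorphism whenever some atom $\prod_s\alpha_s(p_{\gamma(s)})$ vanishes in $L^\infty(X,\mu)$ while its $\sigma$-counterpart does not; moreover $\sigma_e$ need not be the identity, so even without null atoms one has only $\varphi(p_i)=\sigma_e(q_i)$ rather than $\varphi(p_i)=q_i$. The paper repairs the first issue by lumping the residue $r=\sum_{\gamma\in W}\prod_s\sigma_s(q_{\gamma(s)})$ onto one nonzero atom, but then the resulting $\varphi$ satisfies only $\|\varphi(p_i)-q_i\|_2$ small, not zero. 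Hence the map $\Gamma:\AP(\delta')\to\Hom_\mu(\delta)$ is not injective modulo $\rho_\cP=0$: distinct $\cQ,\cQ'$ can produce homomorphisms with the same restriction to $\cP$. The paper bounds the fiber size by $\big(\binom{d}{d\delta^2}2^{d\delta^2}\big)^{|\cP|}$ and uses Stirling's approximation to show this contributes only a term $\kappa\to 0$ as $\delta\to 0$ to the exponential growth rate. Without this fiber-counting step your argument cannot establish the ``$\leq$'' direction.
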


\begin{proof}
Let us first show that
\[
h_{\Sigma ,\mu}' (\cP ,F)
\geq \inf_{\delta > 0} \limsup_{i\to\infty} \frac{1}{d_i} N_0 (\Hom_\mu (\cP , F,\delta ,\sigma_i ),\rho_{\cP}) .
\]
Let $\sigma$ be a map from $G$ to $\Sym (d)$ for some $d\in\Nb$.
Let $\delta > 0$ and $\varphi\in\Hom_\mu (\cP , F,\delta ,\sigma )$. Write
$F = \{ s_1 , \dots ,s_\ell \}$. Then for every $r\in\cP^F$ we have
%writing $r_s$ for the value of $r$ at $s$
\begin{align*}
\lefteqn{\bigg| \zeta \bigg( \prod_{k=1}^\ell \sigma_{s_k} \circ\varphi (r_{s_k} )
- \prod_{k=1}^\ell \varphi\circ\alpha_{s_k} (r_{s_k} )\bigg) \bigg|}\hspace*{20mm} \\
\hspace*{20mm} &\leq \bigg\| \prod_{k=1}^\ell \sigma_{s_k} \circ\varphi (r_{s_k} )
- \prod_{k=1}^\ell \varphi\circ\alpha_{s_k} (r_{s_k} ) \bigg\|_2 \\
&\leq \sum_{k=1}^\ell \| \varphi\circ\alpha_{s_1} (r_{s_1} ) \cdots\varphi\circ\alpha_{s_{k-1}} (r_{s_{k-1}} )
(\sigma_{s_k} \circ\varphi (r_{s_k} ) - \varphi\circ\alpha_{s_k} (r_{s_k} )) \\
&\hspace*{40mm} \ \times\sigma_{s_{k+1}} \circ\varphi (r_{s_{k+1}} ) \cdots \sigma_{s_l} \circ\varphi (r_{s_\ell} ) \|_2 \\
&\leq \sum_{k=1}^\ell \| \sigma_{s_k} \circ\varphi (r_{s_k} ) - \varphi\circ\alpha_{s_k} (r_{s_k} ) \|_2 \\
&< |F| \delta
\end{align*}
and hence
\begin{align*}
\lefteqn{\sum_{r\in\cP^F} \bigg| \zeta \bigg( \prod_{k=1}^\ell \sigma_{s_k} (\varphi (r_{s_k} ))\bigg) -
\mu \bigg( \prod_{k=1}^\ell \alpha_{s_k} (r_{s_k} )\bigg) \bigg|}
\hspace*{20mm} \\
\hspace*{20mm} &\leq \sum_{r\in\cP^F} \bigg( \bigg| \zeta \bigg( \prod_{k=1}^\ell \sigma_s \circ\varphi (r_{s_k} )
- \prod_{k=1}^\ell \varphi\circ\alpha_{s_k} (r_{s_k} )\bigg) \bigg| \\
&\hspace*{30mm} \ + \bigg| \zeta\circ\varphi \bigg( \prod_{k=1}^\ell \alpha_{s_k} (r_{s_k} )\bigg)
- \mu \bigg( \prod_{k=1}^\ell \alpha_{s_k} (r_{s_k} )\bigg) \bigg| \bigg) \\
&< |\cP |^{|F|} (|F| + 1)\delta ,
\end{align*}
so that the partition $\varphi (\cP )$, ordered so as to reflect a fixed ordering of $\cP$, lies in
$\AP (\cP ,F,|\cP |^{|F|} (|F| + 1)\delta ,\sigma )$. Since for any $\varphi ,\psi\in\Hom_\mu (\cP , F,\delta ,\sigma )$
with $\rho_{\cP} (\varphi , \psi ) > 0$ the partitions $\varphi (\cP )$ and $\psi (\cP )$ are distinct, it follows that
\[
N_0 (\Hom_\mu (\cP , F,\delta ,\sigma ), \rho_\cP ) \leq | \AP (\cP ,F,|\cP |^{|F|} (|F| + 1)\delta ,\sigma ) |
\]
and hence
%for our given sofic approximation sequence $\Sigma = \{ \sigma_i : G\to\Sym (d_i ) \}_{i=1}^\infty$ we obtain
\[
\limsup_{i\to\infty} \frac{1}{d_i} N_0 (\Hom_\mu (\cP , F,\delta ,\sigma_i ),\rho_\cP )
\leq h_{\Sigma ,\mu}' (\cP ,F,|\cP |^{|F|} (|F| + 1)\delta ) .
\]
Taking infima over all $\delta > 0$ then yields the desired inequality.

For the reverse inequality, let $\delta > 0$ and write $\cP = \{ p_1 , \dots ,p_n \}$.
Let $\sigma$ be a map from $G$ to $\Sym (d)$ for some $d\in\Nb$ which is good enough sofic approximation
for our purposes below.
Let $\delta'$ be a positive number less than $\delta/3$ which will be further
specified below as a function of $\delta$.
Let $\cQ = \{ q_1 , \dots , q_n \} \in\AP (\cP ,F,\delta' ,\sigma )$.
Define a unital homomorphism $\varphi : \spn (\cP_F )\to\Cb^d$ as follows. First we set
\[
\varphi \bigg( \prod_{k=1}^\ell \alpha_{s_k} (p_{\gamma (k)} )\bigg) =
\prod_{k=1}^\ell \sigma_{s_k} (q_{\gamma (k)} ) \]
for all $\gamma\in \{ 1, \dots ,n\}^{\{ 1,\dots ,\ell \}}$ such that
$\prod_{k=1}^\ell \alpha_{s_k} (p_{\gamma (k)} ) \neq 0$. Write $W$ for the set of all
$\gamma\in \{ 1, \dots ,n\}^{\{ 1,\dots ,\ell \}}$ such that
$\prod_{k=1}^\ell \alpha_{s_k} (p_{\gamma (k)} ) = 0$ but $\prod_{k=1}^\ell \sigma_{s_k} (q_{\gamma (k)} ) \neq 0$.
%and write $W^\comp$ for the complement of $W$ in $\{ 1, \dots ,n\}^{\{ 1,\dots ,l \}}$.
Set $r = \sum_{\gamma\in W} \prod_{k=1}^\ell \sigma_{s_k} (q_{\gamma (k)} )$.
It is easy to see that by shrinking $\delta'$ if necessary
we can arrange that $\| r \|_2 < (12\ell )^{-1} \delta$. In the case that $W\neq\emptyset$
we take a $\gamma_0 \in \{ 1, \dots ,n\}^{\{ 1,\dots ,\ell \}} \setminus W$ and redefine $\varphi$ on
$\prod_{k=1}^\ell \alpha_{s_k} (p_{\gamma_0 (k)})$ to be $r + \prod_{k=1}^\ell \sigma_{s_k} (q_{\gamma (k)} )$.
This produces the desired $\varphi$.
%We have thus defined a unital homomorphism from $\spn (\cP_F )$ to
%$\Cb^d$, and we extend it arbitrarily to $L^\infty (X,\mu )$ to produce the desired $\varphi$.

Now let $s\in F\setminus\{ e \}$ and $1\leq i\leq n$. By relabeling the elements of $F$ we may assume
that $s_1 = s$ and $s_\ell = e$. Then
\begin{align*}
\| \varphi (p_i ) - q_i \|_2
&\leq \bigg\| \sum_{\gamma\in\{ 1, \dots ,n\}^{\{ 1,\dots ,\ell - 1 \}}}
\bigg[ \varphi \bigg( \alpha_e(p_i) \prod_{k=1}^{\ell-1} \alpha_{s_k} (p_{\gamma (k)} ) \bigg) -
\sigma_e (q_i )\prod_{k=1}^{\ell-1} \sigma_{s_k} (q_{\gamma (k)} ) \bigg] \bigg\|_2 \\
&\hspace*{40mm} \ + \| \sigma_e (q_i ) - q_i \|_2 \\
&\leq \| r \|_2 + \frac{\delta}{6 \ell}
\end{align*}
assuming that $\sigma$ is a good enough sofic approximation to ensure that $\sigma_e$ is sufficiently
close to the identity permutation, and hence
\begin{align*}
\lefteqn{\| \varphi\circ\alpha_s (p_i ) - \sigma_s \circ\varphi (p_i ) \|_2} \hspace*{6mm} \\
&\leq \| \varphi\circ\alpha_s (p_i ) - \sigma_s (q_i ) \|_2 + \| \sigma_s (q_i - \varphi (p_i )) \|_2 \\
\hspace*{5mm} &\leq \bigg\| \sum_{\gamma\in\{ 1, \dots ,n\}^{\{ 2,\dots ,\ell \}}}
\bigg[ \varphi \bigg( \alpha_s (p_i ) \prod_{k=2}^\ell \alpha_{s_k} (p_{\gamma (k)} ) \bigg) -
\sigma_s (q_i ) \prod_{k=2}^\ell \sigma_{s_k} (q_{\gamma (k)} )\bigg] \bigg\|_2 + \| r \|_2 + \frac{\delta}{6 \ell}\\
&\leq 2\| r \|_2 + \frac{\delta}{6 \ell} < \frac{\delta}{3\ell} .
\end{align*}
Assuming that $\sigma$ is a good enough sofic approximation, we also have
\begin{align*}
\| \varphi\circ\alpha_e (p_i ) - \sigma_e \circ\varphi (p_i ) \|_2<\frac{\delta}{3\ell}.
\end{align*}
Moreover, for every $\gamma \in \{1, \dots, n\}^{\{1, \dots, \ell \}}$,
\begin{align*}
\lefteqn{\bigg| \zeta\circ\varphi \bigg( \prod_{k=1}^\ell \alpha_{s_k} (p_{\gamma (k)} )\bigg) -
\mu \bigg( \prod_{k=1}^\ell \alpha_{s_k} (p_{\gamma (k)} ) \bigg) \bigg|}\hspace*{10mm} \\
\hspace*{15mm} &\leq \bigg| \zeta \bigg( \prod_{k=1}^\ell \varphi \circ\alpha_{s_k} (p_{\gamma (k)} )  -
\prod_{k=1}^\ell \sigma_{s_k} \circ\varphi (p_{\gamma (k)} )\bigg) \bigg| \\
&\hspace*{20mm} \ + \bigg| \zeta \bigg( \prod_{k=1}^\ell \sigma_{s_k} (\varphi (p_{\gamma (k)} ))-\prod_{k=1}^\ell\sigma_{s_k}(q_{\gamma (k)} )\bigg) \bigg| \\
&\hspace*{20mm} \ + \bigg| \zeta \bigg( \prod_{k=1}^\ell \sigma_{s_k} (q_{\gamma (k)} ) \bigg)
- \mu \bigg( \prod_{k=1}^\ell \alpha_{s_k} (p_{\gamma (k)} ) \bigg) \bigg| \\
&\leq \sum_{k=1}^\ell \| \varphi\circ\alpha_{s_k} (p_{\gamma (k)} ) - \sigma_{s_k} \circ\varphi (p_{\gamma (k)} ) \|_2
+ \sum_{k=1}^\ell \| \varphi (p_{\gamma (k)} ) - q_{\gamma (k)} ) \|_2 + \delta' \\
&< \frac{\delta}{3} + \ell (\| r \|_2+\frac{\delta}{6\ell}) + \frac{\delta}{3} < \delta .
\end{align*}
Thus $\varphi\in\Hom_\mu (\cP , F,\delta ,\sigma )$.

We define a map $\Gamma : \AP (\cP ,F,\delta' ,\sigma )\to\Hom_\mu (\cP , F,\delta ,\sigma )$
by declaring $\Gamma (\cQ )$ to be the element $\varphi$ we constructed above. Given a
$\varphi\in\Hom_\mu (\cP , F,\delta ,\sigma )$, we wish to obtain an upper bound on
the number of partitions in $\AP (\cP ,F,\delta' ,\sigma )$ whose image under $\Gamma$
agrees with $\varphi$ on $\cP$.
Suppose that $\cQ = \{ q_1 , \dots , q_n \}$ and $\cQ' = \{ q_1' , \dots ,q_n' \}$ are two such partitions.
Then for each $i=1,\dots ,n$ we have
\[
\| q_i - q_i' \|_2  \leq \| q_i - \varphi (p_i ) \|_2 + \| \varphi (p_i ) - q_i' \|_2 \leq 2(\| r \|_2+\frac{\delta}{6\ell})
< \delta
\]
so that $q_i$ and $q_i'$ differ at at most $d\delta^2$ coordinates.
It follows that the number of partitions in
$\AP (\cP ,F,\delta' ,\sigma )$ whose image under $\Gamma$ agrees with $\varphi$ on $\cP$
is at most the $n$th power of $\binom{d}{d\delta^2} 2^{d\delta^2}$.
By Stirling's approximation this number
is bounded above by $a\exp(\kappa d)$ for some $a,\kappa > 0$ not depending on $d$ with $\kappa\to 0$ as
$\delta\to 0$. Consequently
\[ |\AP (\cP ,F,\delta' ,\sigma )| \leq a\exp(\kappa d) N_0 (\Hom_\mu (\cP , F,\delta ,\sigma ),\rho_{\cP} ) \]
and thus
\[
h_{\Sigma ,\mu}' (\cP ,F,\delta' ) \leq
\limsup_{i\to\infty} \frac{1}{d_i} N_0 (\Hom_\mu (\cP , F,\delta ,\sigma_i ),\rho_{\cP} ) + \kappa .
\]
Taking an infimum over all $\delta > 0$ then yields
\[
h_{\Sigma ,\mu}' (\cP ,F)
\leq \inf_{\delta > 0} \limsup_{i\to\infty} \frac{1}{d_i} N_0 (\Hom_\mu (\cP , F,\delta ,\sigma_i ),\rho_{\cP}) ,
\]
completing the proof.
\end{proof}

%Combining Lemma~\ref{L-prime} and Proposition~\ref{P-Hom} yields the equality of the two entropies
%on finite measurable partitions:
%
%\begin{proposition}\label{P-prime}
%Let $\cP$ be a finite measurable partition of $X$.
%Then
%\[ h_{\Sigma ,\mu} (\cP ) =  h_{\Sigma ,\mu}' (\cP ) . \]
%\end{proposition}
%
%From the above local result we immediately obtain the following.
%
%\begin{proposition}\label{P-prime global}
%Suppose that there is finite measurable partition of $X$ which is generating for the action.
%Then
%\[ h_{\Sigma ,\mu} (X,G) =  h_{\Sigma ,\mu}' (X,G) . \]
%\end{proposition}

Let $\cP$ be a countable measurable partition of $X$ with $H_{\mu}(\cP)<+\infty$.
We fix an enumeration $p_1 ,p_2 , \dots$ of the elements of $\cP$ and thereby regard
$\cP$ as a sequence in the unit ball of $L^{\infty}_{\Rb}(X, \mu)$. In the case that $\cP$ is finite
we take the tail of this enumeration to be constantly zero after we have exhausted the elements of $\cP$.
%Recall that a {\it chain} of $\cP$ is an increasing sequence $\{\cP_n\}_{n\in \Nb}$ of finite partitions such that $\bigvee_{n\in \Nb}\cP_n=\cP$.
For each $n\in \Nb$, denote by $\cP_n$ the finite partition of $X$ consisting of $p_1, \dots, p_{n-1}$, and $\bigcup_{k=n}^{\infty}p_k$.
Then $\cP_1\le \cP_2\le \dots$ and $\bigvee_{n\in \Nb}\cP_n=\cP$. Thus $\{\cP_n\}_{n=1}^\infty$ is a chain of $\cP$
in the sense of \cite[Defn.\ 13]{Bowen10}.

\begin{lemma} \label{L-infinite proj}
Let $\cP = \{ p_n \}_{n=1}^\infty$ be a countable measurable partition of $X$ with $H_{\mu}(\cP)<+\infty$.
For every $\kappa > 0$ there is an $\varepsilon > 0$ such that
\[
\limsup_{n\to \infty}\inf_{\delta'>0}\limsup_{i\to\infty} \frac{1}{d_i} \log N_0 (\Hom_\mu (\cP_n , F, \delta' ,\sigma_i ),\rho_{\cP_n})
\leq h_{\Sigma ,\mu}^\varepsilon (\cP ,F,m, \delta ) + \kappa
\]
for all finite set $F\subseteq G$ containing $e$, $m\in \Nb$, and $\delta > 0$.
\end{lemma}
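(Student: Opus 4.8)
The plan is to connect a homomorphism on $\spn((\cP_n)_F)$ to a unital positive linear map defined on all of $L^\infty(X,\mu)$ that lands in $\UP_\mu(\cP, F, m, \delta, \sigma_i)$, in such a way that the $\rho_{\cP_n}$-separation of a family of homomorphisms is not collapsed too much under the passage. Since $\cP$ has finite entropy, for large $n$ the tail set $p_n' := \bigcup_{k\geq n} p_k$ has small measure, hence small $L^2$-norm, so $\cP_n$ and $\cP$ are $L^2$-close in the appropriate sense. The key numerical input is that $H_\mu(\cP) < \infty$ forces $\mu(p_n') \to 0$; choosing $n$ large makes the tail negligible, and this is what allows an arbitrarily small $\kappa$.

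\medskip

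\textbf{Step 1.} Fix $\kappa > 0$. Using $H_\mu(\cP) < \infty$, choose $\varepsilon > 0$ small enough that a $\delta$-perturbation bound of size controlled by $\varepsilon$ (together with Stirling's approximation, exactly as in the last part of the proof of Lemma~\ref{L-prime}) contributes at most $\kappa$ to the exponential count. Concretely, the point is that if two homomorphisms $\varphi, \psi \in \Hom_\mu(\cP_n, F, \delta', \sigma)$ have $\varphi(p_i)$ and $\psi(p_i)$ differing at at most $d\varepsilon^2$ coordinates for each $i$, then the number of such $\psi$ agreeing with a fixed $\varphi$ up to $\varepsilon/2$-perturbation is at most $a\exp(\kappa d)$ for suitable $a$; this is the same Stirling estimate already used in Lemma~\ref{L-prime}.

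\medskip

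\textbf{Step 2.} Given $F \ni e$, $m \in \Nb$, $\delta > 0$, and $n$ large, take $\varphi \in \Hom_\mu(\cP_n, F, \delta', \sigma)$ with $\delta'$ small. Extend $\varphi$ by Hahn--Banach to a unital positive linear map $\tilde\varphi : L^\infty(X,\mu) \to \Cb^d$ (as in the discussion of Hahn--Banach extensions in the introduction). Since $\varphi$ is a \emph{homomorphism} on $\spn((\cP_n)_F) \supseteq \spn((\cP_n)_{F,m})$, and since for $n > m$ (say) and $F$ containing the given set, the products occurring in conditions (i), (ii) of Definition~\ref{D-UP} for $\cS = \cP$ are $L^2$-approximated by the corresponding products for $\cP_n$ (replacing each $p_k$, $k \geq n$, by $0$ costs only $\|p_k\|_2 \leq \mu(p_n')^{1/2}$, which is small), one checks $\tilde\varphi \in \UP_\mu(\cP, F, m, \delta, \sigma)$ provided $\delta'$ and $\mu(p_n')$ are small enough relative to $\delta$. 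The equivariance condition (iii) transfers directly since it only involves single elements $p_j$ with $j \leq m < n$.

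\medskip

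\textbf{Step 3.} Estimate how $\rho_{\cP_n}$-separation degrades. If $\varphi, \psi \in \Hom_\mu(\cP_n, F, \delta', \sigma)$ satisfy $\rho_{\cP}(\tilde\varphi, \tilde\psi) \leq \varepsilon$ (the separation scale used on the $\cP$-side), we must bound $\rho_{\cP_n}(\varphi, \psi) = \max_{i < n}\|\varphi(p_i) - \psi(p_i)\|_2$ together with the tail term. Here $\rho_\cP$ is the weighted sum $\sum 2^{-k}\|\cdot(p_k)\|_2$; from $\rho_\cP(\tilde\varphi,\tilde\psi) \leq \varepsilon$ one extracts that each $\|\varphi(p_i) - \psi(p_i)\|_2 \leq 2^i\varepsilon$ for $i < n$. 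This is where the argument needs care: the weights $2^{-k}$ mean separation at index $i$ only implies separation of order $2^i\varepsilon$, so one cannot hope for a clean bound \emph{uniformly in $n$}; instead one should argue index by index, exactly as in the proof of Theorem~\ref{T-gen comparison}, and use that a $\|\cdot\|_2$-ball of small radius in $\Cb^d$ centered at $\varphi(p_i)$ — which is $\{0,1\}$-valued up to the image structure — contains few homomorphism values, by the Stirling count of Step 1. Combining, one sees $N_0(\Hom_\mu(\cP_n, F, \delta', \sigma), \rho_{\cP_n})$ is bounded by $(a\exp(\kappa d))^n$ times $N_\varepsilon(\UP_\mu(\cP, F, m, \delta, \sigma), \rho_\cP)$, and after taking $\frac{1}{d_i}\log$, $\limsup_i$, and then $\limsup_n$, the factor $n\kappa/d_i$ — \emph{this is the subtle point} — is handled by noting that the extension map from homomorphisms on $\spn((\cP_n)_F)$ into $\UP_\mu(\cP, \dots)$ can be taken \emph{injective} on the relevant separated family so that the counting multiplicity is bounded independently of $n$ once $\varepsilon$ is fixed, not by $(\cdot)^n$.

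\medskip

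\textbf{The hard part} will be getting the dependence on $n$ right: a naive extension produces a multiplicity that is an $n$-th power, which after $\limsup_{n\to\infty}$ would be useless. The fix — and the real content of the lemma — is that once $\varepsilon$ is chosen small in Step 1, the perturbation argument should identify a homomorphism $\varphi$ on $\cP_n$ with its extension $\tilde\varphi$ on the \emph{full} algebra in a way that records the data of $\varphi$ on all of $\{p_1, \dots, p_n\}$ inside $\tilde\varphi$ up to a $\rho_\cP$-error governed solely by $\mu(p_n')$, not by $n$; then $\rho_\cP(\tilde\varphi, \tilde\psi) \leq \varepsilon$ genuinely forces $\varphi$ and $\psi$ to be $\rho_{\cP_n}$-close, so $\Gamma$ is essentially injective on separated sets and the multiplicity factor is $a\exp(\kappa d)$ (a single Stirling factor, absorbing the reindexing of the finitely many atoms $p_i$, $i < n$, into coordinates), independent of $n$. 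Taking $\limsup_{n\to\infty}$ then leaves precisely $h_{\Sigma,\mu}^\varepsilon(\cP, F, m, \delta) + \kappa$.
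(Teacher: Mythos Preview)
Your proposal has a genuine gap in the ``hard part.'' You assert that $\rho_\cP(\tilde\varphi,\tilde\psi)\le\varepsilon$ ``genuinely forces $\varphi$ and $\psi$ to be $\rho_{\cP_n}$-close,'' but this is false: since $\rho_\cP$ weights the $k$th term by $2^{-k}$, the bound $\rho_\cP\le\varepsilon$ gives essentially no control on $\|\varphi(p_k)-\psi(p_k)\|_2$ for $k$ near $n$. So $\Gamma$ is far from injective on the tail coordinates, and your proposed single Stirling factor independent of $n$ cannot be justified this way. Your attempt to repair this via $\mu(p_n')\to 0$ does not help either: what matters is not that the tail has small \emph{measure} but that it has small \emph{entropy}, and these are different constraints on the count of admissible partitions.

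The paper's proof handles exactly this point by splitting the index set, and this is the missing idea. One first uses $H_\mu(\cP)<\infty$ to fix an $\ell$ (depending only on $\kappa$) with $\sum_{k>\ell}\xi(\mu(p_k))+\xi(1-\sum_{k>\ell}\mu(p_k))<\kappa/4$, where $\xi(t)=-t\log t$. Then $\varepsilon$ is chosen small relative to $\ell$ so that $\rho_\cP\le\varepsilon$ pins down $\varphi(p_1),\dots,\varphi(p_\ell)$ up to $4^\ell\varepsilon^2 d$ coordinates each, yielding a factor $M_1\le a_1\exp(\kappa d/2)$ via Stirling. For the remaining indices $\ell+1,\dots,n$ one does \emph{not} use $\varepsilon$-closeness at all; instead one uses that any $\varphi\in\Hom_\mu(\cP_n,F,\delta',\sigma)$ has $|\zeta\circ\varphi(p_k)-\mu(p_k)|<\delta'$, so the tail projections $\varphi(p_{\ell+1}),\dots,\varphi(p_{n-1}),\varphi(\bigcup_{k\ge n}p_k)$ form a sub-partition of $\{1,\dots,d\}$ with prescribed approximate sizes, and the number of such sub-partitions is bounded by a multinomial whose logarithm is governed by the tail entropy, giving $M_2\le a_2(2\delta'd)^{n-\ell}\exp(\kappa d/2)$. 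The polynomial-in-$d$ prefactor (which does depend on $n$) disappears after $\frac{1}{d}\log$ and $\limsup_{i\to\infty}$. The extension to $L^\infty(X,\mu)$ is done by conditional expectation onto $\spn((\cP_n)_F)$ rather than Hahn--Banach, which makes verifying membership in $\UP_\mu(\cP,F,m,\delta,\sigma)$ immediate.
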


\begin{proof}
%We will assume $\cP$ to be countably infinite. For finite $\cP$ on can proceed by a similar argument
%which dispenses with the constant $M_2$ below.
%
Set $\xi(t)=-t\log t$ for all $0\le t\le 1$.
Since $H_{\mu}(\cP)<+\infty$, we can find
an $\ell\in \Nb$ such that $\sum_{k=\ell+1}^{\infty}\xi(\mu(p_k))+\xi(1-\sum_{k=\ell+1}^{\infty}\mu(p_k))<\kappa/4$.
%Let $\eta>0$ be a small number which we will determine in a moment.
%Let $\cP$ be a finite partition of unity in $L^\infty (X,\mu )$ consisting of projections.
Let $\varepsilon$ be a positive number to be determined in a moment.
Let $F$ be a finite subset of $G$ containing $e$, $m\in \Nb$,
and $\delta > 0$.
%Let $\varepsilon > 0$.

%Let $n\in \Nb$ be sufficiently large. For each $p$ in $\cP$ denote by $\tilde{p}$ the item of $\cP_n$ satisfying $\tilde{p}\ge p$.
%When $n$ is sufficiently large, we have $\max_{1\le j\le m}\mu(\widetilde{p_j}-p_j)$ arbitrarily small, and hence
%$\max_{1\le j\le m}\|\Eb(p_j|\spn(\cP_n))-\widetilde{p_j}\|_{\infty}<\eta$.
%For every $s\in F$ and every $p\in \{p_1, \dots, p_m\}$, we have
%\begin{eqnarray*}
%& &\|\Eb(\alpha_s(p)|\spn((\cP_n)_F))-\alpha_s(\tilde{p})\|_{\infty}\\
%&=& \|\Eb(\Eb(\alpha_s(p)|\spn(\alpha_s(\cP_n)))|\spn((\cP_n)_F))-\Eb(\alpha_s(\tilde{p})|\spn((\cP_n)_F))\|_{\infty}\\
%&\le& \|\Eb(\alpha_s(p)|\spn(\alpha_s(\cP_n)))-\alpha_s(\tilde{p})\|_{\infty}\\
%&=& \|\alpha_s(\Eb(p|\spn(\cP_n)))-\alpha_s(\tilde{p})\|_{\infty}\\
%&=& \|\Eb(p|\spn(\cP_n))-\tilde{p}\|_{\infty}\\
%&<& \eta.
%\end{eqnarray*}
%For any $1\le j\le m$, $f_1,\dots, f_j\in \{p_1, \dots, p_m\}$ and $s_1, \dots, s_j\in F$, we then have
%\begin{eqnarray*}
%& &\bigg\|\Eb\bigg(\prod_{k=1}^j\alpha_{s_k}(f_k)\bigg|\spn((\cP_n)_F)\bigg)-\prod_{k=1}^j\alpha_{s_k}(\widetilde{f_k})\bigg\|_{\infty}\\
%&\le& \bigg\|\Eb\bigg(\prod_{k=1}^j\alpha_{s_k}(f_k)\bigg|\spn((\cP_n)_F)\bigg)-\prod_{k=1}^j\alpha_{s_k}(\widetilde{f_k})\bigg\|_{\infty}\\
%\end{eqnarray*}

Let $n\in \Nb$ be such that $n>\max(m, \ell)$.
Note that $\spn((\cP_n)_F)\supseteq \spn(\cP_{F, m})$ and $\{p_1, \dots, p_{\max(m, \ell)}\}\subseteq \cP_n$.
Let $\delta' \in (0,\delta ]$ be a small positive number depending on $n$ which we will determine in a moment.
Let $\sigma$ be a map from $G$ to $\Sym (d)$ for some $d\in\Nb$. Note that
for each $\varphi\in \Hom_\mu (\cP_n ,F,\delta'/n^{|F|} ,\sigma )$ the map
$\Gamma(\varphi):=\varphi\circ \Eb(\cdot|\spn((\cP_n)_F))$ is in $\UP_{\mu}(\cP, F, m, \delta, \sigma)$, where
$\Eb(\cdot|\spn((\cP_n)_F))$ denotes the conditional expectation from $L^{\infty}(X, \mu)$ to $\spn((\cP_n)_F)$.
Thus we have a map $\Gamma:  \Hom_\mu (\cP_n ,F,\delta'/n^{|F|} ,\sigma )\rightarrow \UP_{\mu}(\cP, F, m, \delta, \sigma)$
sending $\varphi$ to $\Gamma(\varphi)$.

If $\varphi$ and $\psi$ are elements of $\Hom_\mu (\cP_n ,F,\delta'/n^{|F|} ,\sigma )$ satisfying
$\rho_{\cP} (\Gamma(\varphi) , \Gamma(\psi) ) < \varepsilon$,
then for each $j=1,\dots ,\ell$ we have $\| \varphi (p_j) - \psi (p_j) \|_2 < 2^\ell\varepsilon$ so that the projections
$\varphi (p_j)$ and $\psi (p_j)$ differ at at most $4^\ell\varepsilon^2 d$ places.
Set $c_j=\mu(p_j)$ for $\ell+1\le j\le n-1$ and $c_n=\mu(\bigcup_{k=n}^{\infty}p_k)$.
%Since the function $\xi$ is continuous and $\xi(t_1+t_2)\le \xi(t_1)+\xi(t_2)$ for all $t_1, t_2 \geq$
%with $t_1+t_2\le 1$, we have
%\begin{align*}
%\sum_{k=\ell+1}^n\xi(c_k)+\xi\bigg(1-\sum_{k=\ell+1}^nc_k\bigg)\le \sum_{k=\ell+1}^{\infty}\xi(\mu(p_k))+
%\xi\bigg(1-\sum_{k=\ell+1}^{\infty}\mu(p_k)\bigg)<\kappa/4.
%\end{align*}
%When $\delta'$ is small enough, for every $\varphi\in \Hom_\mu (\cP_n ,F,\delta' ,\sigma )$ and
%$q\in \{p_{\ell+1}, \dots, p_{n-1}, \bigcup_{k=n}^{\infty}p_k\}$
%one has $|\zeta\circ \varphi(q)-\mu(q)|<\eta$.
Note that for every $\varphi\in \Hom_\mu (\cP_n ,F,\delta'/n^{|F|} ,\sigma )$ one has
$|\zeta \circ \varphi(p_k)-c_k|<\delta'$ for all $\ell+1\le k\le n-1$ and
$|\zeta \circ \varphi(\bigcup_{k=n}^{\infty}p_k)-c_n|<\delta'$.
%Now we require that $\delta'<\min(c_{\ell+1}, \dots, c_n)/2$
%and $\sum_{k=\ell+1}^n(c_k+\delta')<1$.
Then the $(\rho_\cP, \varepsilon)$-neighbourhood
of $\Gamma(\varphi)$ for any element $\varphi$ of $\Hom_\mu (\cP_n ,F,\delta'/n^{|F|} ,\sigma )$ contains the images of at most
$M_1M_2$
elements modulo the relation of zero $\rho_{\cP_n}$-distance,
where
\[
M_1=\bigg(\binom{d}{4^\ell\varepsilon^2 d} 2^{4^\ell\varepsilon^2 d}\bigg)^\ell,
\]
and
%\[
%M_2=\sum_{j_{\ell+1}=\lceil(c_{\ell+1}-\delta')d\rceil}^{\lfloor (c_{\ell+1}+\delta')d\rfloor}\cdots
%\sum_{j_n=\lceil(c_n-\delta')d\rceil}^{\lfloor (c_n+\delta')d\rfloor}\binom{d}{j_{\ell+1}}\binom{d-j_{\ell+1}}{j_{\ell+2}}\cdots
%\binom{d-\sum_{k=\ell+1}^{n-1}j_k}{j_n}.
%\]
\[
M_2=\sum_{j_{\ell+1}, \dots, j_n}\binom{d}{j_{\ell+1}}\binom{d-j_{\ell+1}}{j_{\ell+2}}\cdots
\binom{d-\sum_{k=\ell+1}^{n-1}j_k}{j_n}
\]
with the sum ranging over all nonnegative integers $j_{\ell+1}, \dots, j_n$ such that $|j_k/d-c_k|<\delta'$ for all $\ell+1\le k\le n$
and $\sum_{k=\ell+1}^nj_k\le d$.
By Stirling's approximation, when $\varepsilon$ is small enough depending only on $\kappa$ and $\ell$,
one has $M_1\le  a_1\exp(\kappa d/2)$
for some $a_1> 0$ independent of $d$.
Also, %when $|j_k/d-c_k|<\delta'$ for all $\ell+1\le k\le n$, one has $j_{\ell+1}/d, \dots, j_n/d>\min(c_{\ell+1}, \dots, c_n)/2$ and  %$1-\sum_{k=\ell+1}^nj_k/d>1-\sum_{k=\ell+1}^n(c_k+\delta')$,
by Stirling's approximation, for above $j_{\ell+1}, \dots, j_n$ one has
\begin{align*}
\lefteqn{\binom{d}{j_{\ell+1}}\binom{d-j_{\ell+1}}{j_{\ell+2}}\cdots \binom{d-\sum_{k=\ell+1}^{n-1}j_k}{j_n}} \hspace*{30mm}\\
\hspace*{30mm} &\le a_2 \exp\bigg(\bigg(\sum_{k=\ell+1}^n\xi(j_k/d)+\xi\bigg(1-\sum_{k=\ell+1}^nj_k/d\bigg)+\kappa/8\bigg)d\bigg)
\end{align*}
for some $a_2>0$ independent of $d$ and $j_{\ell+1}, \dots, j_n$.
Since the function $\xi$ is continuous and $\xi(t_1+t_2)\le \xi(t_1)+\xi(t_2)$ for all $t_1, t_2 \geq 0$
with $t_1+t_2\le 1$, we have
\begin{align*}
\sum_{k=\ell+1}^n\xi(c_k)+\xi\bigg(1-\sum_{k=\ell+1}^nc_k\bigg)\le \sum_{k=\ell+1}^{\infty}\xi(\mu(p_k))+
\xi\bigg(1-\sum_{k=\ell+1}^{\infty}\mu(p_k)\bigg)<\kappa/4.
\end{align*}
When $\delta'$ is small enough, one has $$\sum_{k=\ell+1}^n\xi(t_k)+\xi\bigg(1-\sum_{k=\ell+1}^nt_k\bigg)<\sum_{k=\ell+1}^n\xi(c_k)+\xi\bigg(1-\sum_{k=\ell+1}^nc_k\bigg)+\kappa/8$$ whenever
$|t_k-c_k|<\delta'$ for all $\ell+1\le k\le n$. Then
\begin{align*}
M_2&\le a_2(2\delta' d)^{n-\ell}\exp\bigg(\bigg(\sum_{k=\ell+1}^n\xi(c_k)+\xi\bigg(1-\sum_{k=\ell+1}^nc_k\bigg)
+\kappa/4\bigg)d\bigg)\\
&\le a_2(2\delta' d)^{n-\ell}\exp(\kappa d/2).
\end{align*}
Consequently
we obtain
\begin{align*}
N_0 (\Hom_\mu (\cP_n , F,\delta'/n^{|F|} ,\sigma ),\rho_{\cP_n})
\leq a_1a_2(2\delta' d)^{n-\ell} \exp(\kappa d) N_\varepsilon (\UP_\mu (\cP , F, m, \delta ,\sigma ),\rho_{\cP} ) ,
%&\leq ae^{\kappa d} N_\varepsilon (\UP_\mu (\cP , F,m, \delta ,\sigma ),\rho_{\cP} ) ,
\end{align*}
from which the lemma follows.
\end{proof}

\begin{lemma}\label{L-mult perturbation}
Let $A$ be a unital commutative $C^*$-algebra, $\Omega$ a nonempty finite subset of $A$, and $\varepsilon > 0$.
Then there is a $\delta > 0$ such that whenever $d\in\Nb$ and
$\varphi : A\to\Cb^d$ is a unital positive linear map satisfying
$\| \varphi (f^* f) - \varphi (f)^*\varphi (f) \|_2 < \delta$
for all $f\in\Omega$ there exists a unital homomorphism $\tilde{\varphi} : A\to\Cb^d$
such that $\| \tilde{\varphi} (f) - \varphi (f)\|_2 < \varepsilon$ for all $f\in\Omega$.
\end{lemma}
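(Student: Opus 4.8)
The plan is to pass to the Gelfand picture $A=C(K)$, where $K$ is the (compact Hausdorff) spectrum of $A$, and to read the hypothesis on $\varphi$ as a smallness statement for the ``variances'' of certain measures. Concretely, for each $a\in\{1,\dots,d\}$ let $\mu_a$ be the state on $A$ obtained by composing $\varphi$ with the $a$th coordinate functional on $\Cb^d$; by the Riesz representation theorem we view $\mu_a$ as a regular Borel probability measure on $K$. For $f\in A$ the $a$th coordinate of $\varphi(f^*f)-\varphi(f)^*\varphi(f)$ is $\mu_a(|f|^2)-|\mu_a(f)|^2=\int_K|f-\mu_a(f)|^2\,d\mu_a=:v_a(f)\ge 0$, so $\|\varphi(f^*f)-\varphi(f)^*\varphi(f)\|_2<\delta$ says exactly that $\frac1d\sum_{a=1}^d v_a(f)^2<\delta^2$, whence $\frac1d\sum_{a=1}^d v_a(f)<\delta$ by the Cauchy--Schwarz inequality on the uniform probability space $\{1,\dots,d\}$, for every $f\in\Omega$.

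Given $\varepsilon>0$ I would simply take $\delta=\varepsilon^2/|\Omega|$. For each $a$ set $h_a=\sum_{f\in\Omega}|f-\mu_a(f)|^2\in C(K)$; this is a nonnegative continuous function with $\int_K h_a\,d\mu_a=\sum_{f\in\Omega}v_a(f)$, so if $x_a\in K$ is chosen to minimise $h_a$ over $K$ then $h_a(x_a)\le\sum_{f\in\Omega}v_a(f)$. Define $\tilde{\varphi}:A\to\Cb^d$ by $\tilde{\varphi}(f)=(f(x_1),\dots,f(x_d))$, which is manifestly a unital homomorphism (a $d$-tuple of point evaluations). For any $f\in\Omega$,
\[
\|\tilde{\varphi}(f)-\varphi(f)\|_2^2=\frac1d\sum_{a=1}^d|f(x_a)-\mu_a(f)|^2\le\frac1d\sum_{a=1}^d h_a(x_a)\le\sum_{g\in\Omega}\Bigl(\frac1d\sum_{a=1}^d v_a(g)\Bigr)<|\Omega|\delta=\varepsilon^2 ,
\]
so $\|\tilde{\varphi}(f)-\varphi(f)\|_2<\varepsilon$, and note that $\delta$ was chosen independently of $d$.

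The one idea to highlight is that approximate multiplicativity of a unital positive map into $\Cb^d$ is, coordinatewise, precisely the statement that the states $\mu_a$ have small variance on $\Omega$, and that a unital homomorphism into $\Cb^d$ is just a tuple of point masses; a measure of small variance is $L^2$-close to the point mass at a near-minimiser of the relevant spread function. The only mild subtlety---and the reason for averaging over all coordinates at once rather than correcting $\varphi$ coordinate by coordinate---is that the hypothesis controls only the $\ell^2$-average of the defects $v_a(f)$ over $a$, not each one individually, so one cannot make $\varphi$ multiplicative in a fixed coordinate; but the desired conclusion is itself an $\ell^2$-average statement, so summing the pointwise bounds $h_a(x_a)\le\sum_{g\in\Omega}v_a(g)$ over $a$ does the job. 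I do not expect a genuine obstacle here; in particular there is no need to replace $A$ by a separable subalgebra, since all one uses is that a continuous function attains its minimum on the compact space $K$.
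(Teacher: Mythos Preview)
Your argument is correct, and it is in fact cleaner than the paper's. Both proofs rest on the same observation---that the $a$th coordinate of $\varphi(f^*f)-\varphi(f)^*\varphi(f)$ is the variance $v_a(f)=\mu_a(|f-\mu_a(f)|^2)$ of $f$ under the state $\mu_a$---but they extract a nearby homomorphism in different ways. The paper proceeds in two stages: first it uses a Chebyshev-type bound to isolate a large set $J\subseteq\{1,\dots,d\}$ of coordinates on which all the variances $v_a(f)$ are uniformly small, redefines $\varphi$ arbitrarily on the small complement (incurring a controlled $\|\cdot\|_2$-error), and thereby reduces to the case $d=1$; then for a single state it argues, again via a concentration-of-measure step, that small variance on $\Omega$ forces the measure to live mostly on a small-diameter set, from which a suitable point mass is read off. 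Your route bypasses both reductions by introducing the single nonnegative function $h_a=\sum_{f\in\Omega}|f-\mu_a(f)|^2$ and choosing $x_a$ to minimise it, so that $h_a(x_a)\le\mu_a(h_a)=\sum_{f}v_a(f)$; summing over $a$ and bounding one term of the sum by the whole gives the result directly, with the explicit choice $\delta=\varepsilon^2/|\Omega|$. What this buys you is a shorter proof with no case splits and a quantitative $\delta$; what the paper's decomposition buys is perhaps a clearer separation between the ``bad coordinates are few'' step and the ``good states are near point masses'' step, but at the cost of a somewhat informal treatment of the $d=1$ case. Incidentally, you do not even need the Riesz representation theorem: since $h_a\ge(\min_K h_a)\cdot 1$ in $C(K)$ and $\mu_a$ is a state, positivity alone gives $\min_K h_a\le\mu_a(h_a)$.
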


\begin{proof}
First observe that, for every $\eta\in (0,1)$
%we can find a $\delta > 0$ such that for
and every unital positive linear map $\varphi : A\to\Cb^d \cong C( \{ 1,\dots ,d \} )$ satisfying
$\| \varphi (f^* f) - \varphi (f)^* \varphi (f) \|_2 < \eta$
for all $f\in\Omega$, there exists $J\subseteq \{ 1,\dots ,d \}$ with $|J| \geq (1-|\Omega|\eta )d$ such that
$|\varphi (f^* f)(a) - \varphi (f)^* \varphi (f)(a)| < \sqrt{\eta}$ for all $f\in\Omega$ and $a\in J$.
If $\eta$ is small enough
then, denoting by $P_I$ the canonical projection $\Cb^d \to \Cb^I$ for a set $I\subseteq \{ 1,\dots ,d\}$,
any unital positive linear map $\tilde{\varphi} : A\to\Cb^d$ such that $P_J \circ\tilde{\varphi} = P_J\circ \varphi$ and
$P_{\{1,\dots ,d\} \setminus J} \circ\tilde{\varphi}$ is a unital homomorphism will satisfy
$\| \tilde{\varphi} (f) - \varphi (f)\|_2 < \varepsilon /2$ for all $f\in\Omega$. Thus if we redefine
$\tilde{\varphi}$ so that for every $a\in J$ the state $f\mapsto \tilde{\varphi} (f)(a)$ on $C(X)$ is multiplicative
and $| \tilde{\varphi} (f)(a) - \varphi (f)(a) | < \varepsilon /2$ for all $f\in\Omega$, we will have
$\| \tilde{\varphi} (f) - \varphi (f)\|_2 < \varepsilon$ for all $f\in\Omega$, as desired. This reduces the
problem to proving the lemma statement for states, i.e., the case $d=1$.

Say $A=C(X)$ for some compact Hausdorff space $X$.
Suppose that for some $\delta > 0$ we have a state $\varphi : C(X)\to\Cb$ satisfying
$| \varphi (f^* f) - |\varphi (f)|^2 | < \delta$ for all $f\in\Omega$. The state $\varphi$
corresponds to a regular Borel probability measure $\mu$ on $X$, and the approximate multiplicativity
condition is easily seen to imply the existence of an $\eta > 0$ with $\eta\to 0$ as $\delta\to 0$ such that
for each $f\in C(X)$ there exists a set $A_f \subseteq\Cb$ of diameter at most $\eta$
for which $\mu (f^{-1} (A_f )) \geq 1-\eta /|\Omega |$, in which case
$\mu ( \bigcap_{f\in\Omega} f^{-1} (A_f )) \geq 1 - \eta$. Thus if $\eta$ is small enough, which can ensure
by assuming $\delta$ to be sufficiently small, any multiplicative state $\tilde{\varphi} : C(X)\to\Cb$
defined by evaluation at some point in $\bigcap_{f\in\Omega} f^{-1} (A_f )$ will satisfy
$| \tilde{\varphi} (f) - \varphi (f) | < \varepsilon$ for all $f\in\Omega$, as desired.
\end{proof}

\begin{lemma} \label{L-infinite proj2}
Let $\cP = \{ p_n \}_{n=1}^\infty$ be a countable measurable partition of $X$.
Let $F$ be a finite subset of $G$ containing $e$ and let $\varepsilon>0$. Then
%there exists an $m\in \Nb$
%such that
\[
\liminf_{n\to \infty}\inf_{\delta>0}\limsup_{i\to\infty} \frac{1}{d_i} \log N_{\varepsilon/2} (\Hom_\mu (\cP_n , F, \delta ,\sigma_i ),\rho_{\cP_n})
\geq h_{\Sigma ,\mu}^\varepsilon (\cP ,F).
\]
%for all nonempty finite sets $F\subseteq G$, $m\in \Nb$, and $\delta > 0$.
%where we use $\rho_{\cP_n}(\varphi, %\psi)=\sum_{j=1}^{n-1}\frac{1}{2^j}\|\varphi(p_j)-\psi(p_j)\|_2+\frac{1}{2^n}\|\varphi(\bigcup_{k=n}^{\infty}p_k)-\psi(\bigcup_{k=n}^{\infty}p_k)\|_2$.
\end{lemma}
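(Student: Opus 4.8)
The plan is to pass from approximately-multiplicative unital positive maps on $\spn(\cS)$ (or rather on the finite-dimensional subspace appearing in the definition of $h_{\Sigma,\mu}^\varepsilon(\cP,F)$) to genuine unital homomorphisms on $\spn((\cP_n)_F)$ for $n$ large, in a way that is isometric enough with respect to $\rho_\cP$ to preserve $\varepsilon$-separation up to shrinking $\varepsilon$ by a factor of $2$. The key point is that once $n$ exceeds $m$, the subspace $\spn((\cP_n)_F)$ contains $\spn(\cP_{F,m})$ and in particular all of $p_1,\dots,p_m$, so that restricting and then correcting a map in $\UP_\mu(\cP,F,m,\delta,\sigma_i)$ to be multiplicative on $\spn((\cP_n)_F)$ changes its values on the $p_j$ by only a controlled amount. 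This is essentially the converse direction to the map $\Gamma$ used in Lemma~\ref{L-infinite proj}, and Lemma~\ref{L-mult perturbation} is exactly the tool that makes the correction possible.

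In detail, I would fix $\varepsilon>0$ and $F\ni e$, and fix $m\in\Nb$ and $\delta>0$ witnessing $h_{\Sigma,\mu}^\varepsilon(\cP,F)$ closely, i.e.\ with $h_{\Sigma,\mu}^\varepsilon(\cP,F,m,\delta)$ near $h_{\Sigma,\mu}^\varepsilon(\cP,F)$. Now let $\varphi\in\UP_\mu(\cP,F,m,\delta,\sigma_i)$ (so $\varphi$ is defined on $L^\infty(X,\mu)$ but only its restriction to the finite-dimensional self-adjoint subspace spanned by $\cP_{F,m}$ and the relevant $\alpha_s(p_j)$ matters). Choose $n$ large (larger than $m$, and larger than some threshold depending on $\varepsilon$ and the finitely many data $F$, $m$, $\cP$ via Lemma~\ref{L-mult perturbation}), and consider $\psi:=\varphi\circ\Eb(\cdot\mid\spn((\cP_n)_F))$, a unital positive map on $L^\infty(X,\mu)$. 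The almost-multiplicativity of $\varphi$ on $\cP_{F,m}$, together with the fact that $\spn((\cP_n)_F)$ is a $\ast$-subalgebra on which $\Eb$ is a conditional expectation, should give that $\psi$ restricted to $\spn((\cP_n)_F)$ is approximately multiplicative on the finite generating set $(\cP_n)_F$ (here one uses $n>m$ and Lemma~\ref{L-mult perturbation}'s hypothesis form $\|\psi(g^*g)-\psi(g)^*\psi(g)\|_2$ small); then Lemma~\ref{L-mult perturbation} produces a genuine unital homomorphism $\tilde\psi:\spn((\cP_n)_F)\to\Cb^{d_i}$ with $\|\tilde\psi(g)-\psi(g)\|_2<\varepsilon/4$, say, for all $g\in(\cP_n)_F$, and in particular for $g$ each of $p_1,\dots,p_{n-1}$ and $\bigcup_{k\ge n}p_k$. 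One then checks $\tilde\psi\in\Hom_\mu(\cP_n,F,\delta'',\sigma_i)$ for suitable $\delta''\to 0$ (conditions (i) and (ii) of the $\Hom$-definition follow from (ii), (iii) of the $\UP$-definition for $\varphi$ together with the approximations just made).

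The final bookkeeping is to observe that the assignment $\varphi\mapsto\tilde\psi$ is $\rho$-quasi-isometric in the relevant sense: since $p_1,\dots,p_m\in(\cP_n)_F$ and $\|\tilde\psi(p_j)-\varphi(p_j)\|_2\le\|\tilde\psi(p_j)-\psi(p_j)\|_2+\|\psi(p_j)-\varphi(p_j)\|_2$, and the second term vanishes because $\Eb(p_j\mid\spn((\cP_n)_F))=p_j$ for $j\le n-1$, we get that if two maps $\varphi,\varphi'$ have $\rho_\cP(\varphi,\varphi')\ge\varepsilon$ then their images differ by at least $\varepsilon/2$ in the $\rho_{\cP_n}$-pseudometric (here one must be a little careful because $\rho_\cP$ weights coordinate $p_n$ but the correction only controls $p_1,\dots,p_{n-1}$; this is handled by taking $n$ large enough that the tail $\sum_{j\ge n}2^{-j}<\varepsilon/4$, so the $p_n$-discrepancy is negligible, as is done verbatim in the proof of Lemma~\ref{L-infinite proj}). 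Hence $N_\varepsilon(\UP_\mu(\cP,F,m,\delta,\sigma_i),\rho_\cP)\le N_{\varepsilon/2}(\Hom_\mu(\cP_n,F,\delta'',\sigma_i),\rho_{\cP_n})$ for all large $i$ and all large $n$, and taking $\limsup_i$, then $\inf_{\delta''}$, then $\liminf_n$ gives the inequality. The main obstacle I anticipate is the step showing that restricting $\varphi$ (which is only approximately multiplicative along products of length $\le m$ of the generators) followed by $\Eb(\cdot\mid\spn((\cP_n)_F))$ yields a map approximately multiplicative on \emph{all} of the finite generating set $(\cP_n)_F$ in the precise hypothesis form required by Lemma~\ref{L-mult perturbation}; making this quantitative, and choosing $m$ versus $\delta$ versus $n$ in the right dependency order so the $\delta''$ can be driven to $0$, is the delicate part.
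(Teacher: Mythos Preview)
Your overall strategy---perturb a map in $\UP_\mu$ to a genuine homomorphism on $\spn((\cP_n)_F)$ via Lemma~\ref{L-mult perturbation}, then check that $\varepsilon$-separation survives as $\varepsilon/2$-separation---is exactly the paper's. But the dependency order you propose is backwards, and this is not a bookkeeping detail: with $m$ fixed first and $n>m$ chosen afterwards, condition~(i) in $\UP_\mu(\cP,F,m,\delta,\sigma)$ only constrains products of $\alpha_s(p_j)$ with $j\le m$, whereas elements of $(\cP_n)_F$ involve $p_j$ for $m<j\le n-1$. There is then no way to verify the hypothesis of Lemma~\ref{L-mult perturbation} on $\Omega=(\cP_n)_F$, and precomposing with $\Eb(\cdot\mid\spn((\cP_n)_F))$ does not help, since on $\spn((\cP_n)_F)$ that conditional expectation is the identity and you are back to the same unconstrained $\varphi$.

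The paper resolves this by reversing the order of choice. The left side is $\liminf_n\inf_\delta\limsup_i(\cdots)$, so one must prove the inequality for each fixed large $n$ and each target $\delta>0$; the right side is $\inf_m\inf_{\delta'}h^\varepsilon_{\Sigma,\mu}(\cP,F,m,\delta')$, so $m$ and $\delta'$ may be chosen \emph{after} $n$ and $\delta$. The paper takes $n$ with $2^{-(n-2)}<\varepsilon/4$, sets $m=\max(|F|,n-1)$ so that every element of $(\cP_n)_F$ lies in $\spn(\{1\}\cup\cP_{F,m})$, and works in $\UP_\mu(\cP,F,m^2,\delta',\sigma)$ (the parameter $m^2$ rather than $m$ supplies enough approximate multiplicativity to bound $\|\varphi(f^*f)-\varphi(f)^*\varphi(f)\|_2$ for $f\in(\cP_n)_F$, which is precisely the hypothesis Lemma~\ref{L-mult perturbation} needs). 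With this order, $\delta'$ can be taken small enough that the perturbed homomorphism lies in $\Hom_\mu(\cP_n,F,\delta,\sigma)$, and your separation bookkeeping then goes through verbatim. So the concern you flag at the end is exactly the crux, and the fix is to invert the $m$-versus-$n$ dependency rather than to push harder in the direction you set up.
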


\begin{proof}
Let $n\in \Nb$ be such that $2^{-(n-2)}<\varepsilon/4$.
Let $\delta>0$. We will show
\begin{eqnarray} \label{E-proj Hom}
\limsup_{i\to\infty} \frac{1}{d_i} \log N_{\varepsilon/2} (\Hom_\mu (\cP_n , F, \delta ,\sigma_i ),\rho_{\cP_n})
\geq h_{\Sigma ,\mu}^\varepsilon (\cP ,F).
\end{eqnarray}

Set $m=\max(|F|, n-1)$.
Note that $(\cP_n)_F\subseteq \spn(1\cup \cP_{F, m})$.
Let $\sigma$ be a map from $G$ to $\Sym (d)$ for some $d\in\Nb$.
Given an $\eta > 0$,
by Lemma~\ref{L-mult perturbation} there is a $\delta' > 0$ not depending on $d$ and $\sigma$
such that for every $\varphi\in\UP_\mu (\cP ,F, m^2,\delta' ,\sigma )$
there is a unital homomorphism $\tilde{\varphi} : \spn ((\cP_n)_F ) \to\Cb^d$ for which
$\| \tilde{\varphi} (f) - \varphi (f) \|_2 < \min (\eta , \varepsilon /(8(n-1)))$ for all $f\in(\cP_n)_F$.
By taking $\eta$ and $\delta'$ small enough this will imply that
$\tilde{\varphi} \in \Hom_\mu (\cP_n ,F,\delta ,\sigma )$.
Define a map $\Gamma : \UP_\mu (\cP ,F, m^2, \delta' ,\sigma )\to\Hom_\mu (\cP ,F,\delta ,\sigma )$
by $\Gamma (\varphi ) = \tilde{\varphi}$.

%Let $K$ be an $(\varepsilon /6)$-net for $\Hom_\mu (\cP ,F,\delta ,\sigma )$ of minimal cardinality.
%By taking the elements of $K$ within distance $\varepsilon /3$ to the image of $\Gamma$ and perturbing them
%to lie in the image of $\Gamma$, we create a set $L\subseteq\UP_\mu (\cP ,F,\delta' ,\sigma )$
%such that $|L|\leq |K|$ and $\Gamma (L)$ is an $(\varepsilon /3)$-net for
%$\Gamma (\UP_\mu (\cP ,F,\delta' ,\sigma ))$.
%Then $L$ is an $\varepsilon$-net for $\UP_\mu (\cP ,F,\delta' ,\sigma )$, since
For all
$\varphi , \psi\in \UP_\mu (\cP ,F, m^2, \delta' ,\sigma )$ we have
\begin{align*}
\rho_\cP(\varphi, \psi)&=\sum_{j=1}^{\infty} \frac{1}{2^j} \| \varphi (p_j) - \psi (p_j) \|_2 \\
&\leq \sum_{j=1}^{n-1}  \frac{1}{2^j}\| \varphi (p_j) - \psi (p_j) \|_2 +\frac{1}{2^{n-2}}\\
&\leq \sum_{j=1}^{n-1} \frac{1}{2^j}\big(\| \varphi (p_j) - \tilde{\varphi} (p_j) \|_2 + \| \tilde{\varphi} (p_j) - \tilde{\psi} (p_j) \|_2
+ \| \tilde{\psi} (p_j) - \psi (p_j) \|_2\big) +\frac{\varepsilon}{4}\\
&<\frac{\varepsilon}{2} + \rho_{\cP_n}(\Gamma(\varphi), \Gamma(\psi)).
%\sum_{f\in \cP}\| \tilde{\varphi} (f) - \tilde{\psi} (f) \|_2 .
\end{align*}
Thus for any $(\rho_{\cP}, \varepsilon)$-separated subset $L$ of $\UP_\mu (\cP ,F, m^2, \delta' ,\sigma )$,
the set $\Gamma(L)$ is $(\rho_{\cP_n}, \varepsilon/2)$-separated.
Consequently
\[
N_{\varepsilon /2} (\Hom_\mu (\cP_n ,F,\delta ,\sigma ),\rho_{\cP_n} )
\ge N_\varepsilon (\UP_\mu (\cP ,F, m^2, \delta' ,\sigma ),\rho_\cP ).
\]
Therefore \eqref{E-proj Hom} holds.
\end{proof}

\begin{proposition}\label{P-infinite prime}
Let $\cP = \{ p_n \}_{n=1}^\infty$ a countable measurable partition of $X$ with $H_{\mu}(\cP)<+\infty$.
% and let $\{\cP_n\}_{n\in \Nb}$  be a chain
%of $\cP$. For any finite subset $F$ of $G$ containing $e$, one has
Then
\[
h'_{\Sigma, \mu}(\cP)=h_{\Sigma, \mu}(\cP).
\]
\end{proposition}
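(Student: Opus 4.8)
The plan is to prove the two inequalities $h'_{\Sigma,\mu}(\cP)\le h_{\Sigma,\mu}(\cP)$ and $h_{\Sigma,\mu}(\cP)\le h'_{\Sigma,\mu}(\cP)$ separately, using Lemma~\ref{L-infinite proj} for the first and Lemma~\ref{L-infinite proj2} for the second, with Lemma~\ref{L-prime} serving throughout as the dictionary that rewrites Bowen's quantity $h'_{\Sigma,\mu}(\cP_n,F)$ for the finite partitions $\cP_n$ of the chain $\{\cP_n\}$ in terms of the numbers $N_0(\Hom_\mu(\cP_n,F,\delta,\sigma_i),\rho_{\cP_n})$ appearing in those lemmas. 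I regard $\cP$ as a sequence as set up before Lemma~\ref{L-infinite proj}, so that $h_{\Sigma,\mu}(\cP)$ is meaningful, and I use that $\{\cP_n\}$ is a chain with $\bigvee_n\cP_n=\cP$, so that by \cite{Bowen10} the sequence $h'_{\Sigma,\mu}(\cP_n)$ converges to $h'_{\Sigma,\mu}(\cP)$. Since $h^\varepsilon_{\Sigma,\mu}(\cP,F)$ and $h'_{\Sigma,\mu}(\cP_n,F)$ are monotone under enlarging $F$, all infima over finite subsets of $G$ may be taken over those containing $e$, matching the hypotheses of Lemmas~\ref{L-prime}–\ref{L-infinite proj2}.

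For $h'_{\Sigma,\mu}(\cP)\le h_{\Sigma,\mu}(\cP)$: fix $\kappa>0$ and let $\varepsilon>0$ be the number produced by Lemma~\ref{L-infinite proj}. For every finite $F\ni e$, Lemmas~\ref{L-infinite proj} and \ref{L-prime} give $\limsup_n h'_{\Sigma,\mu}(\cP_n,F)\le h^\varepsilon_{\Sigma,\mu}(\cP,F,m,\delta)+\kappa$ for all $m$ and $\delta$, hence $\limsup_n h'_{\Sigma,\mu}(\cP_n,F)\le h^\varepsilon_{\Sigma,\mu}(\cP,F)+\kappa$ upon taking the infimum over $m$ and $\delta$. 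Since $h'_{\Sigma,\mu}(\cP_n,F)\ge h'_{\Sigma,\mu}(\cP_n)$ and $h'_{\Sigma,\mu}(\cP_n)\to h'_{\Sigma,\mu}(\cP)$, the left side is at least $h'_{\Sigma,\mu}(\cP)$, so $h'_{\Sigma,\mu}(\cP)\le h^\varepsilon_{\Sigma,\mu}(\cP,F)+\kappa$ for every such $F$; taking the infimum over $F$ yields $h'_{\Sigma,\mu}(\cP)\le h^\varepsilon_{\Sigma,\mu}(\cP)+\kappa\le h_{\Sigma,\mu}(\cP)+\kappa$, and letting $\kappa\downarrow 0$ finishes this inequality. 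The point here is that the infimum over $F$ only ever has to be applied to the side that is being bounded above, so no interchange of limits is involved.

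For $h_{\Sigma,\mu}(\cP)\le h'_{\Sigma,\mu}(\cP)$ it suffices to show $h^\varepsilon_{\Sigma,\mu}(\cP)\le h'_{\Sigma,\mu}(\cP)$ for each fixed $\varepsilon>0$. For a finite $F\ni e$, Lemma~\ref{L-infinite proj2}, the trivial bound $N_{\varepsilon/2}(\cdot,\rho_{\cP_n})\le N_0(\cdot,\rho_{\cP_n})$, and Lemma~\ref{L-prime} combine to give $h^\varepsilon_{\Sigma,\mu}(\cP,F)\le\liminf_n h'_{\Sigma,\mu}(\cP_n,F)$. Taking the infimum over $F$ then gives $h^\varepsilon_{\Sigma,\mu}(\cP)\le\inf_F\liminf_n h'_{\Sigma,\mu}(\cP_n,F)$, and it remains to identify the right-hand side with $h'_{\Sigma,\mu}(\cP)$. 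A supremum over $\varepsilon$ then concludes.

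The main obstacle is precisely this last identification: $h'_{\Sigma,\mu}(\cP)$ is manifestly $\lim_n\inf_F h'_{\Sigma,\mu}(\cP_n,F)$, and interchanging the infimum over $F$ with the limit along the chain is not formal — for a general doubly indexed family $\inf_F\liminf_n a_{n,F}$ can exceed $\lim_n\inf_F a_{n,F}$. The way around is to feed in the stability of the chain for each fixed $F$: since $\cP_m$ refines $\cP_n$ for $m\ge n$ with $H_\mu(\cP_m\mid\cP_n)\le\sum_{k\ge n}\xi(\mu(p_k))\to 0$ as $n\to\infty$, Bowen's Rokhlin-type bound gives $h'_{\Sigma,\mu}(\cP_m,F)\le h'_{\Sigma,\mu}(\cP_n,F)+\sum_{k\ge n}\xi(\mu(p_k))$ for all $m\ge n$ and all $F$. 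Given $\kappa>0$, choose $n$ so large that this tail is $<\kappa$ and $h'_{\Sigma,\mu}(\cP_n)<h'_{\Sigma,\mu}(\cP)+\kappa$, and then (using $h'_{\Sigma,\mu}(\cP_n)=\inf_F h'_{\Sigma,\mu}(\cP_n,F)$) choose $F\ni e$ with $h'_{\Sigma,\mu}(\cP_n,F)<h'_{\Sigma,\mu}(\cP_n)+\kappa$. For this single $F$ one gets $h'_{\Sigma,\mu}(\cP_m,F)<h'_{\Sigma,\mu}(\cP)+3\kappa$ for all $m\ge n$, hence $\liminf_m h'_{\Sigma,\mu}(\cP_m,F)<h'_{\Sigma,\mu}(\cP)+3\kappa$, and therefore $h^\varepsilon_{\Sigma,\mu}(\cP)\le h^\varepsilon_{\Sigma,\mu}(\cP,F)\le\liminf_m h'_{\Sigma,\mu}(\cP_m,F)<h'_{\Sigma,\mu}(\cP)+3\kappa$; letting $\kappa\downarrow 0$ gives $h^\varepsilon_{\Sigma,\mu}(\cP)\le h'_{\Sigma,\mu}(\cP)$, as needed. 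Thus the essential content of the proposition is carried by Lemmas~\ref{L-infinite proj} and \ref{L-infinite proj2} together with this chain bookkeeping, rather than by any new estimate.
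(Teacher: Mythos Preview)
Your proof is correct and follows essentially the same route as the paper: both arguments combine Lemma~\ref{L-prime} with Lemmas~\ref{L-infinite proj} and \ref{L-infinite proj2} to sandwich $h_{\Sigma,\mu}(\cP)$ between $\inf_F\limsup_n h'_{\Sigma,\mu}(\cP_n,F)$ and $\inf_F\liminf_n h'_{\Sigma,\mu}(\cP_n,F)$. The only real difference is how the identification with $h'_{\Sigma,\mu}(\cP)$ is made. The paper simply cites \cite[Prop.~6.2]{Bowen10}, which already gives the formula $h'_{\Sigma,\mu}(\cP)=\inf_F\lim_{n\to\infty}h'_{\Sigma,\mu}(\cP_n,F)$ and so dispatches both the existence of the limit in $n$ and the interchange with $\inf_F$ in one stroke. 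You instead invoke the chain convergence $h'_{\Sigma,\mu}(\cP_n)\to h'_{\Sigma,\mu}(\cP)$ for the first inequality and, for the second, rebuild the interchange from Bowen's Rokhlin-type bound $h'_{\Sigma,\mu}(\cP_m,F)\le h'_{\Sigma,\mu}(\cP_n,F)+H_\mu(\cP_m\mid\cP_n)$; this is precisely the machinery underlying Proposition~6.2, so you are effectively reproving a fragment of it. The paper's version is shorter; yours is slightly more self-contained and makes explicit where the interchange is needed, but there is no substantive divergence in strategy.
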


\begin{proof}
By Lemmas~\ref{L-infinite proj} and \ref{L-prime}, we have
\[
\inf_F\limsup_{n\to \infty}h'_{\Sigma, \mu}(\cP_n, F)\le h_{\Sigma, \mu}(\cP),
\]
where $F$ ranges over the nonempty finite subsets of $G$.
By Lemmas~\ref{L-infinite proj2} and \ref{L-prime}, for any finite subset $F$ of $G$ containing $e$ we have
\[
\liminf_{n\to \infty}h'_{\Sigma, \mu}(\cP_n, F)\ge h_{\Sigma, \mu}(\cP, F).
\]
Since
\[
h'_{\Sigma, \mu}(\cP)=\inf_F\lim_{n\to \infty}h'_{\Sigma, \mu}(\cP_n, F)
\]
where $F$ ranges over the nonempty finite subsets of $G$ \cite[Prop.\ 6.2]{Bowen10},
we obtain $h'_{\Sigma, \mu}(\cP)=h_{\Sigma, \mu}(\cP)$.
\end{proof}

In view of the definitions of $h_{\Sigma ,\mu} (X,G)$ and $h_{\Sigma ,\mu}' (X,G)$,
we obtain the following from the above local result.

\begin{theorem}\label{T-prime global}
Suppose that there is a generating measurable partition $\cP$ of $X$ with $H_{\mu}(\cP)<+\infty$.
Then
\[ h_{\Sigma ,\mu} (X,G) =  h_{\Sigma ,\mu}' (X,G) . \]
\end{theorem}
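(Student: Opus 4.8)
The plan is to reduce the global equality to the local comparison already established in Proposition~\ref{P-infinite prime}, by producing a single partition that is simultaneously admissible in both definitions. First I would invoke the hypothesis to fix a generating measurable partition $\cP$ of $X$ with $H_{\mu}(\cP)<+\infty$; finiteness of the entropy forces $\cP$ to be (effectively) countable, so as in the discussion preceding Lemma~\ref{L-infinite proj} I would fix an enumeration of its elements, padding the tail with zeros if $\cP$ is finite, and thereby regard $\cP$ as a sequence $\cS$ in the unit ball of $L^{\infty}_{\Rb}(X,\mu)$.

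Next I would check that $\cS$ is dynamically generating in the sense of Definition~\ref{D-global}. This is immediate from the translation recorded in Section~\ref{S-measure}: viewing $\cP$ as a partition of unity consisting of projections, the statement that $\cP$ is a generating partition is precisely the statement that the corresponding set of functions is not contained in any proper $G$-invariant von Neumann subalgebra of $L^{\infty}(X,\mu)$. Consequently, by Definition~\ref{D-global} (whose well-definedness is Theorem~\ref{T-gen comparison}), we have $h_{\Sigma,\mu}(X,G)=h_{\Sigma,\mu}(\cS)$; and $h_{\Sigma,\mu}(\cS)$ equals $h_{\Sigma,\mu}(\cP)$ — this is built into the sequential formalism used to define $h_{\Sigma,\mu}(\cP)$ for a countable partition, and coincides with the finite-partition quantity of Definition~\ref{D-finite} in the finite case as noted immediately after that definition.

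On Bowen's side, the same $\cP$ is a generating measurable partition with $H_{\mu}(\cP)<+\infty$, so by Bowen's invariance theorem and the definition of $h'_{\Sigma,\mu}(X,G)$ recalled at the start of Section~\ref{S-comparison} we have $h'_{\Sigma,\mu}(X,G)=h'_{\Sigma,\mu}(\cP)$. Now Proposition~\ref{P-infinite prime} supplies $h_{\Sigma,\mu}(\cP)=h'_{\Sigma,\mu}(\cP)$, and chaining the three identities $h_{\Sigma,\mu}(X,G)=h_{\Sigma,\mu}(\cP)=h'_{\Sigma,\mu}(\cP)=h'_{\Sigma,\mu}(X,G)$ completes the argument.

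I do not expect a genuine obstacle here: all the analytic content lives in Proposition~\ref{P-infinite prime} and the results feeding it, so the remaining work is purely bookkeeping — matching the finite-versus-countable enumeration conventions on the two sides, and confirming that "generating partition" in the measure-theoretic sense agrees with "dynamically generating" in the von Neumann-algebraic sense. Both points were settled in Section~\ref{S-measure}, so the theorem is essentially the global repackaging of the local comparison.
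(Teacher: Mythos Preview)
Your proposal is correct and follows exactly the same approach as the paper: the theorem is derived immediately from Proposition~\ref{P-infinite prime} together with the definitions of $h_{\Sigma,\mu}(X,G)$ and $h'_{\Sigma,\mu}(X,G)$. You have simply spelled out the bookkeeping (the enumeration conventions and the equivalence of ``generating partition'' with ``dynamically generating'') that the paper leaves implicit.
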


\begin{remark}\label{R-hom}
It follows from Lemmas~\ref{L-infinite proj} and \ref{L-infinite proj2}
that for a countably measurable partition $\cP = \{ p_n \}_{n=1}^\infty$ of $X$ with $H_{\mu}(\cP)<+\infty$
we can compute $h_{\Sigma ,\mu} (\cP )$ by counting unital homomorphisms, i.e.,
\[
h_{\Sigma ,\mu} (\cP ) = \inf_{F} \limsup_{n\to\infty} \inf_{\delta > 0}
\limsup_{i\to\infty} \frac{1}{d_i} \log N_0 (\Hom_\mu (\cP_n ,F,\delta ,\sigma_i ),\rho_{\cP_n} )
\]
where $F$ ranges over all nonempty finite subsets of $G$. In particular, when $\cP$ is a finite
measurable partition of $X$ we have
\[
h_{\Sigma ,\mu} (\cP ) = \inf_{F} \inf_{\delta > 0}
\limsup_{i\to\infty} \frac{1}{d_i} \log N_0 (\Hom_\mu (\cP ,F,\delta ,\sigma_i ),\rho_{\cP} )
\]
where $F$ ranges over all nonempty finite subsets of $G$.
\end{remark}

\section{Topological entropy}\label{S-top}

Throughout this section $X$ is a compact metrizable space
and $\alpha$ a continuous action of a countable sofic group $G$ on $X$.

By the Gelfand theory mentioned in the introduction, the unital $C^*$-subalgebras of $C(X)$
(i.e., the unital $^*$-subalgebras which are closed in the supremum norm)
correspond to the continuous quotients of $X$ via composition of functions.
The $G$-invariant unital $C^*$-subalgebras of $C(X)$ thus correspond to the dynamical factors of $X$.
A subset of $C(X)$ is said to be {\it dynamically generating}
if it is not contained in any proper $G$-invariant unital $C^*$-subalgebra of $C(X)$.

As in the measurable case, we will begin by defining the entropy $h_\Sigma (\cS)$
of a sequence $\cS=\{p_n\}_{n\in \Nb}$ in the unit ball of $C_{\Rb}(X)$. Given
a nonempty finite set $F\subseteq G$ and an $m\in \Nb$ we write $\cS_{F,m}$ for the set of all
products of the form $\alpha_{s_1} (f_1 ) \cdots \alpha_{s_j} (f_j )$ where $1\le j\le m$ and
$f_1 , \dots ,f_j \in \{p_1, \dots, p_m\}$ and $s_1 , \dots ,s_j \in F$.
For a given $d\in\Nb$ we define on the set of unital positive linear maps
from some unital self-adjoint linear subspace of $C(X)$ containing $\spn (\cS )$ to $\Cb^d$ the pseudometric
\begin{align*}
\rho_{\cS} (\varphi , \psi ) &= \sum_{n=1}^{\infty} \frac{1}{2^n}\| \varphi (p_n) - \psi (p_n) \|_2 .
\end{align*}

%In order to avoid confusion stemming from the double use of the notation $\Hom (\cdot )$ in the following
%definition, we will consistently label sequences in the unit ball of $C_{\Rb}(X)$ by $\cS$ or $\cT$ and
%finite partitions of unity in $C(X)$ by $\cP$ or $\cQ$.

\begin{definition}\label{D-UP Hom top}
Let $\sigma$ be a map from $G$ to $\Sym (d)$ for some $d\in\Nb$.
Let $\cS=\{p_n\}_{n\in \Nb}$ be a sequence in the unit ball of $C_{\Rb}(X)$. Let $F$ be a nonempty finite subset
of $G$, $m\in\Nb$, and $\delta > 0$.
%Define $\UP(\cS , F,m,\delta ,\sigma )$ to be the set of all unital positive linear maps
%$\varphi : C(X) \to \Cb^d$ such that
%\begin{enumerate}
%\item[(i)] $\| \varphi (\alpha_{s_1} (f_1 ) \cdots \alpha_{s_j} (f_j ))
%- \varphi (\alpha_{s_1} (f_1 ))\cdots\varphi (\alpha_{s_j} (f_j )) \|_2 < \delta$
%for all $1\le j\le m$, $f_1 , \dots ,f_j \in \{p_1, \dots, p_m\}$ and $s_1 , \dots ,s_j \in F$,
%
%\item[(iii)] $\| \varphi\circ\alpha_s (f) - \sigma_s \circ\varphi (f) \|_2 < \delta$ for all $s\in F$ and $f\in \{p_1, \dots, p_m\}$.
%\end{enumerate}
Define $\Hom (\cS , F,\delta ,\sigma )$ to be
the set of all unital homomorphisms $\varphi : C(X) \to \Cb^d$ such that
\[
\sum_{n=1}^{\infty} \frac{1}{2^n}\| \varphi\circ\alpha_s (p_n) - \sigma_s \circ\varphi (p_n) \|_2 < \delta
\]
for all $s\in F$.
\end{definition}

As before $N_\varepsilon (\cdot ,\rho )$ denotes the maximal cardinality of a finite $\varepsilon$-separated subset
with respect to the pseudometric $\rho$.
As in the case of measure entropy, for a sequence $\cS$ in the unit ball of $C_\Rb (X)$ we have
$N_\varepsilon (\Hom (\cS ,F,\delta ,\sigma ), \rho_{\cS} )
\geq N_{\varepsilon'} (\Hom (\cS ,F',\delta' ,\sigma), \rho_{\cS} )$
whenever $F\subseteq F'$, $\delta\geq\delta'$, and $\varepsilon\leq \varepsilon'$.

As usual $\Sigma = \{ \sigma_i : G \to \Sym (d_i ) \}_{i=1}^\infty$ is a fixed sofic approximation sequence.

\begin{definition}\label{D-entropy top}
Let $\cS$ be a sequence in the unit ball of $C_\Rb (X)$, $\varepsilon > 0$, $F$ a nonempty finite subset of $G$,
and $\delta > 0$. Define
\begin{align*}
h_\Sigma^\varepsilon (\cS ,F,\delta )
&= \limsup_{i\to\infty} \frac{1}{d_i} \log N_\varepsilon (\Hom (\cS ,F,\delta ,\sigma_i ), \rho_{\cS} ) ,\\
h_\Sigma^\varepsilon (\cS ,F) &= \inf_{\delta > 0} h_\Sigma^\varepsilon (\cS ,F,\delta ) ,\\
h_\Sigma^\varepsilon (\cS ) &= \inf_{F} h_\Sigma^\varepsilon (\cS ,F) ,\\
h_\Sigma (\cS ) &= \sup_{\varepsilon > 0} h_\Sigma^\varepsilon (\cS )
\end{align*}
where the infimum in the second last line is over all nonempty finite subsets of $G$.
If $\Hom (\cS ,F,\delta ,\sigma_i )$ is empty for all sufficiently large $i$, we set
$h_\Sigma^\varepsilon (\cS ,F,\delta ) = -\infty$.
\end{definition}

\begin{remark}\label{R-pou top}
As in the measurable case (Remark~\ref{R-pou}),
if we add $1$ to $\cS$ by setting $p'_1=1$ and $p'_{j+1}=p_j$ for all $j\in \Nb$, then
%for any nonempty finite subset
%$F$ of $G$, any $m\in \Nb$, any $\delta>0$ and any map $\sigma$ from $G$ to $\Sym(d)$ for some $d\in \Nb$,  we have $\UP_{\mu}(\cP, F, m,\delta, %\sigma)\supseteq \UP_{\mu}(\cP', F, m+1,\delta, \sigma)\supseteq\UP_{\mu}(\cP, F, m+1,\delta, \sigma)$. Thus
$h_\Sigma(\cS)=h_\Sigma(\cS')$.
%Similarly, $h_{\Sigma, \mu, \lambda}(\cP)=h_{\Sigma, \mu, \lambda}(\cP')$ for all $\lambda>1$.
\end{remark}

\begin{remark}\label{R-embedded sofic approx}
One can reformulate our definition of topological entropy at the space level as follows.
A unital homomorphism from $C(X)$ to $\Cb^d$ is given by a set of point evaluations indexed by
$\{ 1,\dots ,d \}$, and hence corresponds to a map from $\{ 1,\dots ,d \}$ to $X$.
Thus in the definition we are measuring the maximal cardinality of an $\varepsilon$-separated subset of the
set of maps $\{ 1,\dots ,d \} \to X$ which are approximately equivariant
with respect to the sofic approximation of $G$ on $\{ 1,\dots ,d \}$, where distance between these
maps is measured in an $\ell^2$ sense relative to a fixed continuous pseudometric $\rho$ on $X$
which is dynamically generating in the sense that for any distinct $x,y\in X$ one has $\rho (sx,sy) > 0$
for some $s\in G$.
This viewpoint also applies in the measure-theoretic context: in the unital positive linear
map framework of Section~\ref{S-measure} one is effectively dealing with approximately equivariant copies
of a sofic approximation inside the space of probability measures,
while in the next section we will show how to formulate measure entropy via homomorphisms
and hence by tracking points as in the topological case. Approximately equivariant maps from $\{ 1,\dots ,d \}$ to $X$
can be regarded as systems of interlocking approximate partial orbits, and in case of amenable $G$ they approximately
decompose into partial orbits over F{\o}lner sets \cite{KerLi10}.
%We have adopted the linear perspective throughout this paper because of the various technical advantages it affords.
%as demonstrated for instance in Section~\ref{S-measure}.

We also remark that one could equivalently measure the distance between approximately equivariant maps
from  $\{ 1,\dots ,d \}$ to $X$ in an
$\ell^\infty$ sense, as Proposition~\ref{P-infinity norm} shows, but since sofic approximations
are statistical anyway it is more consistent to think
entirely in $\ell^2$ terms (or some other similar type of weak approximation) unless forced to do otherwise.
See also Section~4 of \cite{Li10} for the equivalence of these kinds of approximations
for the purpose of expressing classical dynamical entropy in the amenable case.
%Note however that when $G$ is residually finite (e.g., $G=\Zb$) one has sofic approximation
%sequences arising from finite quotients, and for such sequences the local conditions in the definition of sofic
%approximation are verified by genuine multiplicativity and freeness,
%which permits one to work without reference to any kind of $\ell^2$-type approximation.
%Indeed this is directly reflective of Rufus Bowen's
%definition of topological entropy for $\Zb$-actions in terms of $(n,\varepsilon )$-separated sets \cite{Bow71}.
%See \cite{KerLi10} for details concerning this relationship, and compare also Section~4 of \cite{Li10}.
\end{remark}

%It is easily seen from the definition that $h_\Sigma (\cS )$ depends only on the image of $\cS$, and so we
%make the following definition.
%
%\begin{definition}\label{D-set top}
%For a countable subset $\cP$ of the unit ball of $C_\Rb (X)$ we define $h_\Sigma (\cP )$ as the common value
%of $h_\Sigma (\cS )$ over all sequences $\cS$ whose image is $\cP$.
%\end{definition}

%As in the case of measure entropy, we have
%$h_\Sigma^\varepsilon (\cP ,F,m,\delta ) \leq h_\Sigma^{\varepsilon /2} (\cP ,F' ,m' ,\delta' )$
%whenever $F\subseteq F'$, $m\leq m'$, and $\delta\geq\delta'$.

% $\cP$ and $\cQ$ be finite partitions of unity in $C(X)$. We say that $\cP$ {\it refines} $\cQ$
%if every element of $\cQ$ is a sum of elements of $\cP$. Note that the set of finite partitions of
%unity in $C(X)$ is directed by refinement. Indeed if $\cP$ and $\cQ$ are any two finite partitions of unity
%then $\{ pq : p\in\cP \text{ and } q\in\cQ \}$ is a common refinement.

%\begin{definition}
%We define the topological entropy of the system $(X,G)$ with respect to the $\Sigma$ by
%\[
%h_\Sigma (X,G) = \limsup_\cP h_\Sigma (\cP )
%\]
%where the limit supremum is taken over the set of finite partitions of unity of $C(X)$ directed
%by refinement.
%\end{definition}

\begin{theorem}\label{T-gen comparison top}
Let $\cS = \{ p_n \}_{n=1}^\infty$ and $\cT = \{ q_n \}_{n=1}^\infty$ be dynamically generating sequences
in the unit ball of  $C_{\Rb}(X)$. Then $h_\Sigma (\cT ) = h_\Sigma (\cS )$.
\end{theorem}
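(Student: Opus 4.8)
The plan is to imitate the proof of Theorem~\ref{T-gen comparison}, which becomes considerably shorter here: since the maps in $\Hom(\cS,F,\delta,\sigma)$ are genuine unital homomorphisms $C(X)\to\Cb^d$, the only surviving defining condition is approximate equivariance measured in $\rho_\cS$ — conditions (i) and (ii) of the measurable definition, and the $L^2$-boundedness of Proposition~\ref{P-bounded}, have no analogues — and, crucially, $\varphi$ sends a product $\alpha_{s_1}(p_{n_1})\cdots\alpha_{s_r}(p_{n_r})$ exactly to $\varphi(\alpha_{s_1}(p_{n_1}))\cdots\varphi(\alpha_{s_r}(p_{n_r}))$, with the analogous identity holding for each $\sigma_s$ on $\Cb^d$. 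By symmetry it suffices to show $h_\Sigma(\cT)\le h_\Sigma(\cS)$, and by Remark~\ref{R-pou top} we may assume $p_1=q_1=1$. Fix $\varepsilon>0$; I will show $h_\Sigma^\varepsilon(\cT)\le h_\Sigma(\cS)$. First choose $R$ with $2^{-(R-1)}<\varepsilon/3$. Since $\cS$ is dynamically generating and $p_1=1$, the Stone--Weierstrass theorem gives a finite $E\ni e$, an $\ell\in\Nb$ and coefficients $d_{j,g}$ (for $j\le R$, $g\in\cS_{E,\ell}$) with $q_j':=\sum_g d_{j,g}g$ as close to $q_j$ in $\|\cdot\|_\infty$ as we wish; set $M=\max|d_{j,g}|$ and let $\varepsilon'>0$ be a small multiple of $\varepsilon$ depending on $M,\ell,|E|,R$.

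Next I would carry out the inclusion step. Fix $\cS$-side data: a finite $F'\supseteq E\cup\{e\}$ and $\delta'\in(0,\varepsilon']$; choose $N$ with $2^{-(N-1)}<\delta'/2$. Using that $\cT$ is dynamically generating and $q_1=1$, approximate each $p_n$ for $n\le N$ by some $p_n'\in\spn(\cT_{D,k})$ in $\|\cdot\|_\infty$ for a common finite $D\ni e$ and $k\in\Nb$, and put $F''=F'D$ and $\delta''>0$ small. The claim is that for all sufficiently good sofic approximations $\sigma$ one has $\Hom(\cT,F'',\delta'',\sigma)\subseteq\Hom(\cS,F',\delta',\sigma)$. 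Given $\varphi$ in the former, $s\in F'$ and $n\le N$, expanding $p_n'$ over $\cT_{D,k}$ and using the two homomorphism identities above reduces the estimate of $\|\varphi\circ\alpha_s(p_n')-\sigma_s\circ\varphi(p_n')\|_2$ to controlling, for each factor $\alpha_{t}(q_{m})$ that appears ($t\in D$, $m\le k$), the three discrepancies $\varphi(\alpha_{st}(q_m))\approx\sigma_{st}\varphi(q_m)$, $\sigma_{st}\varphi(q_m)\approx\sigma_s\sigma_{t}\varphi(q_m)$, and $\sigma_s\sigma_{t}\varphi(q_m)\approx\sigma_s\varphi(\alpha_{t}(q_m))$: the first and third come from approximate $\cT$-equivariance at $st\in F'D=F''$ and at $t\in D\subseteq F''$ (each bounded by $2^k\delta''$ after stripping the $\rho_\cT$-weights), the middle one from $\sigma_s\sigma_t\approx\sigma_{st}$ for good $\sigma$ (valid since $\|\varphi(q_m)\|_\infty\le1$). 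Telescoping the products — legitimate because all factors have sup-norm $\le1$, so multiplication is nonexpansive for $\|\cdot\|_2$ on $\Cb^d$ — and then replacing $p_n'$ by $p_n$ on both sides (using that $\varphi$ is $\|\cdot\|_\infty$-contractive and $\sigma_s$ is $\|\cdot\|_2$-isometric) yields $\|\varphi\circ\alpha_s(p_n)-\sigma_s\circ\varphi(p_n)\|_2<\delta'/2$ once $\delta''$ is small and $\sigma$ good; adding the weighted tail $\sum_{n>N}2^{-n}\cdot 2<\delta'/2$ gives the $\Hom(\cS,F',\delta',\sigma)$ condition.

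Finally the separation step and assembly. For $\varphi,\psi\in\Hom(\cT,F'',\delta'',\sigma)$ with $\rho_\cS(\varphi,\psi)\le\varepsilon'$ — which by the inclusion also lie in $\Hom(\cS,F',\delta',\sigma)$, so that $\|\varphi\circ\alpha_s(p_n)-\sigma_s\varphi(p_n)\|_2<2^n\delta'$ for $s\in F'$ and $\|\varphi(p_n)-\psi(p_n)\|_2<2^n\varepsilon'$ for all $n$ — I would estimate $\|\varphi(q_j')-\psi(q_j')\|_2\le M\sum_{g\in\cS_{E,\ell}}\|\varphi(g)-\psi(g)\|_2$ and bound each $\|\varphi(g)-\psi(g)\|_2$ by telescoping $g$ into single factors $\alpha_{s_i}(p_{n_i})$ ($s_i\in E\subseteq F'$, $n_i\le\ell$) and comparing $\varphi(\alpha_{s_i}(p_{n_i}))$ successively to $\sigma_{s_i}\varphi(p_{n_i})$, to $\sigma_{s_i}\psi(p_{n_i})$, to $\psi(\alpha_{s_i}(p_{n_i}))$, obtaining a bound of order $\ell^{\ell+1}|E|^\ell 2^\ell\varepsilon'$ (using $\delta'\le\varepsilon'$). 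Combining with $\|\varphi(q_j)-\psi(q_j)\|_2\le 2\|q_j-q_j'\|_\infty+\|\varphi(q_j')-\psi(q_j')\|_2$ and summing with weights $2^{-j}$ over $j\le R$ plus the tail $2^{-(R-1)}<\varepsilon/3$ yields $\rho_\cT(\varphi,\psi)<\varepsilon$, provided $\|q_j-q_j'\|_\infty$ and $\varepsilon'$ were fixed small enough. Hence every $(\rho_\cT,\varepsilon)$-separated subset of $\Hom(\cT,F'',\delta'',\sigma)$ is a $(\rho_\cS,\varepsilon')$-separated subset of $\Hom(\cS,F',\delta',\sigma)$, so $N_\varepsilon(\Hom(\cT,F'',\delta'',\sigma),\rho_\cT)\le N_{\varepsilon'}(\Hom(\cS,F',\delta',\sigma),\rho_\cS)$ for good $\sigma$; passing to $\limsup_i$ and then to infima — over $F'',\delta''$ on the left, and over $F',\delta'\in(0,\varepsilon']$ on the right, the latter being unrestricted since the quantity is monotone in $\delta'$ — gives $h_\Sigma^\varepsilon(\cT)\le h_\Sigma^{\varepsilon'}(\cS)\le h_\Sigma(\cS)$, and letting $\varepsilon\to0$ finishes one direction, with symmetry doing the rest. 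I expect no genuine obstacle: the content is quantifier bookkeeping, and the one thing to watch is that approximate equivariance lives in the weighted $\ell^2$-pseudometric $\rho$, so passing to per-coordinate bounds costs a factor $2^n$, which dictates the order in which $R,E,\ell,N,D,k,\delta',\delta''$ must be chosen — precisely why the topological statement is cheaper than Theorem~\ref{T-gen comparison}.
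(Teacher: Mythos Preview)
Your proposal is correct and follows essentially the same route as the paper's proof: the same symmetry reduction, the same two-stage choice of approximations (first $q_j\approx q_j'\in\spn(\cS_{E,\ell})$, then $p_n\approx p_n'\in\spn(\cT_{D,k})$), the same inclusion $\Hom(\cT,F'D,\delta'',\sigma)\subseteq\Hom(\cS,F',\delta',\sigma)$ established via the three-term telescoping comparison $\varphi\alpha_{st}\approx\sigma_{st}\varphi\approx\sigma_s\sigma_t\varphi\approx\sigma_s\varphi\alpha_t$, and the same separation transfer. One cosmetic remark: at the end you write ``letting $\varepsilon\to0$'', but what you actually do is take the supremum over $\varepsilon>0$; since you have already shown $h_\Sigma^\varepsilon(\cT)\le h_\Sigma(\cS)$ for every $\varepsilon>0$, that supremum immediately gives $h_\Sigma(\cT)\le h_\Sigma(\cS)$.
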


\begin{proof}
It suffices by symmetry to prove that $h_\Sigma (\cT ) \leq h_\Sigma (\cS )$. By Remark~\ref{R-pou top}
we may assume that $p_1=q_1=1$.

%Let $\kappa > 0$. We aim to show that $h_\Sigma (\cT ) \leq h_\Sigma (\cS ) + \kappa$.
%
%Take an $\varepsilon > 0$ such that $h_\Sigma (\cT ) \leq h_\Sigma^\varepsilon (\cT ) + \kappa /2$.
%Take a $\delta > 0$ such that $2|\cT |\delta < \varepsilon /2$.
Let $\varepsilon>0$. Take an $R\in \Nb$ with $2^{-(R-1)}<\varepsilon/3$.
Since $\cS$ is dynamically generating and $p_1=1$, there are a nonempty finite set $E\subseteq G$ and an
$\ell\in\Nb$ such that for each $q\in \{ q_1 , \dots ,q_R \}$ there exist $d_{q,g} \in\Cb$
for $g\in \cS_{E, \ell}$ such that the function
\[
q' = \sum_{g\in \cS_{E, \ell}} d_{q,g}g
%\alpha_{s_1} (h_1 ) \cdots \alpha_{s_\ell} (h_\ell )
\]
satisfies $\| q - q' \|_{\infty} < (6R)^{-1}\varepsilon$.
%, where $\cS^\ell$ is interpreted as $\cS^{\{ 1,\dots ,\ell\}}$ and $E^\ell$ likewise.
Set $M = \max_{1\le j\le R}\max_{g\in \cS_{E, \ell}} | d_{q_j,g} |$ and
$\varepsilon'=\varepsilon/(2^{\ell+3} \ell^{\ell+1} |E|^\ell MR)$.
We will show that $h^{\varepsilon}_\Sigma (\cT ) \leq h^{\varepsilon'}_\Sigma (\cS )$.
Since $\varepsilon$ is an arbitrary positive number, this implies that $h_\Sigma (\cT ) \leq h_\Sigma (\cS )$.

Let $F$ be a finite subset of $G$ containing $e$ and $E$, and let $0<\delta\leq\varepsilon'/2$.
Take an $m\in \Nb$ with $2^{-(m-1)}<\delta/3$.

As $\cT$ is dynamically generating and $q_1=1$, there are a nonempty finite set $D\subseteq G$ and an $n\in\Nb$
such that for each $p\in\{p_1, \dots, p_m\}$ there exist
$c_{p,g} \in\Cb$ for $g\in \cT_{D, n}$ such that the function
\[
p' = \sum_{g\in \cT_{D, n}} c_{p,g} g
%\alpha_{s_1} (h_1 ) \cdots \alpha_{s_n} (h_n )
\]
satisfies $\| p - p' \|_{\infty} < (6m)^{-1} \delta$.
Set $M_1=\max_{1\le j\le m}\max_{g\in \cT_{D, n}} |c_{p_j, g}|$.

Take a $\delta' > 0$ such that
$3n \cdot 2^nn^n|D|^nM_1\delta' < \delta/(3m)$.
We will show that $h^{\varepsilon}_\Sigma (\cT, FD, \delta') \leq h^{\varepsilon'}_\Sigma (\cS, F, \delta)$.
Since $F$ can be chosen so as to contain an arbitrary finite subset of $G$ and $\delta$ can be taken arbitrarily small,
this implies that $h^{\varepsilon}_\Sigma (\cT ) \leq h^{\varepsilon'}_\Sigma (\cS )$.

Let $\sigma$ be a map from $G$ to $\Sym (d)$ for some $d\in\Nb$ which we assume
to be a good enough sofic approximation to guarantee an estimate in the following paragraph, as will be indicated.
Let $\varphi\in\Hom (\cT ,FD,\delta', \sigma )$.
We will show that $\varphi \in\Hom (\cS ,F,\delta , \sigma )$.

Let $t\in F$. For $(h,s)\in \{q_1, \dots, q_n\}^n \times D^n$ we have,
%by the multiplicativity of $\varphi$ and repeated use of the triangle inequality,
%\begin{align*}
%\bigg\| \varphi \bigg( \prod_{k=1}^n \alpha_{ts_k} (h_k )\bigg) -
%\prod_{k=1}^n \varphi (\alpha_{ts_k} (h_k )) \bigg\|_2
%< \delta''
%\end{align*}
using the multiplicativity of $\varphi$ and our assumption that $F$ contains $e$,
\begin{align*}
\lefteqn{\bigg\| \varphi\circ\alpha_t \bigg(\prod_{k=1}^n \alpha_{s_k} (h_k )\bigg)
- \sigma_t \circ\varphi \bigg(\prod_{k=1}^n \alpha_{s_k} (h_k )\bigg) \bigg\|_2} \hspace*{25mm} \\
\hspace*{25mm} &= \bigg\| \prod_{k=1}^n \varphi\circ\alpha_t (\alpha_{s_k} (h_k ))
- \prod_{k=1}^n \sigma_t \circ\varphi (\alpha_{s_k} (h_k )) \bigg\|_2 \\
&\leq \sum_{k=1}^n \| \varphi \circ\alpha_t (\alpha_{s_k} (h_k ))
- \sigma_t \circ\varphi (\alpha_{s_k} (h_k )) \|_2 \\
&\leq \sum_{k=1}^n \big( \| \varphi\circ\alpha_{ts_k} (h_{s_k} )
- \sigma_{ts_k} \circ\varphi (h_{s_k} ) \|_2 \\
&\hspace*{30mm} \ + \| (\sigma_{ts_k} - \sigma_t \circ\sigma_{s_k} )(\varphi (h_{s_k} )) \|_2 \\
&\hspace*{30mm} \ + \| \sigma_t (\sigma_{s_k} \circ\varphi (h_{s_k} ) - \varphi\circ\alpha_{s_k} (h_{s_k} )) \|_2 \big) \\
&\leq 3n\cdot 2^n\delta'
\end{align*}
assuming that $\sigma$ is a good enough sofic approximation.
Thus given a $p\in\{p_1, \dots, p_m\}$, since
%$p\in \cS_{F, m}$ and
$q_1=1$ we have
\begin{align*}
\lefteqn{\| \varphi\circ\alpha_t (p' ) - \sigma_t \circ\varphi (p' ) \|_2}\hspace*{15mm} \\
\hspace*{15mm} &\leq \sum_{g\in \cT_{D, n}} | c_{p,g} |\cdot \|\varphi\circ \alpha_t(g)-\sigma_t\circ \varphi(g)\|_2\\
%\bigg\| \varphi\circ\alpha_t \bigg(\prod_{k=1}^n \alpha_{s_k} (h_k )\bigg)
%- \sigma_t \circ\varphi \bigg(\prod_{k=1}^n \alpha_{s_k} (h_k )\bigg) \bigg\|_2 \\
&\le M_1\sum_{g\in \cT_{D, n}} \|\varphi\circ \alpha_t(g)-\sigma_t\circ \varphi(g)\|_2\\
%3n\delta' \sum_{(h,s)\in \cT^n \times D^n} | c_{p,h,s} | \\
&\le n^n|D|^nM_1 3n \cdot 2^n\delta'
< \frac{\delta}{3m},
\end{align*}
whence
\begin{align*}
\lefteqn{\sum_{j=1}^{\infty}\frac{1}{2^j}\| \varphi\circ\alpha_t (p_j) - \sigma_t \circ\varphi (p_j) \|_2}
\hspace*{25mm} \\
\hspace*{30mm} &\leq \sum_{j=1}^m\frac{1}{2^j}\| \varphi\circ\alpha_t (p_j) - \sigma_t \circ\varphi (p_j) \|_2
+\frac{1}{2^{m-1}}\\
&\leq \sum_{j=1}^m \big(
\| \varphi\circ\alpha_t (p_j-p'_j ) \|_2 + \| \varphi\circ\alpha_t (p'_j ) - \sigma_t \circ\varphi (p'_j ) \|_2 \\
&\hspace*{25mm} \ + \| \sigma_t \circ\varphi (p'_j - p_j) \|_2 \big) +\frac{1}{2^{m-1}}\\
&< 2
\sum_{j=1}^m
\| p_j-p'_j \|_{\infty} + \frac{\delta}{3} +\frac{1}{2^{m-1}}\\
&< \frac{\delta}{3} + \frac{\delta}{3} +\frac{\delta}{3}= \delta .
\end{align*}
Therefore
$\varphi\in\Hom (\cS ,F, \delta , \sigma )$, and so
$\Hom (\cT ,FD,\delta' , \sigma )\subseteq\Hom (\cS ,F, \delta , \sigma )$.

%Let $K$ be an $\varepsilon'$-net in $\Hom (\cS ,F, \delta' ,\sigma )$
%with respect to $\rho_{\cS}$
%of minimal cardinality. By taking each element of $K$ within distance $\varepsilon'$ to
%$\Hom (\cT ,FD, \delta'' ,\sigma )$ and perturbing it to an
%element of $\Hom (\cT ,FD, \delta'' ,\sigma )$ less than distance $\varepsilon'$ away,
%we construct a set $L\subseteq\Hom (\cT ,FD, \delta'' ,\sigma )$ with $|L|\leq |K|$ such that $L$
%is a $2\varepsilon'$-net for $\Hom (\cT ,FD, \delta'' ,\sigma ))$ with respect to $\rho_\cS$.
Let $\varphi$ and $\psi$ be elements of $\Hom (\cT ,FD, \delta' ,\sigma )$ such that
$\rho_\cS (\varphi , \psi ) \le \varepsilon'$.
Then for $(h,s)\in \{p_1, \dots, p_\ell\}^\ell \times F^\ell$ we have
%estimating differences of products in the usual way
\begin{align*}
\lefteqn{\bigg\| \varphi \bigg( \prod_{k=1}^\ell \alpha_{s_k} (h_k )\bigg)
- \psi \bigg( \prod_{k=1}^\ell \alpha_{s_k} (h_k )\bigg) \bigg\|_2} \hspace*{20mm} \\
\hspace*{20mm} &= \bigg\| \prod_{k=1}^\ell \varphi (\alpha_{s_k} (h_{s_k} ))
- \prod_{k=1}^\ell \psi (\alpha_{s_k} (h_{s_k} )) \bigg\|_2 \\
&\leq \sum_{k=1}^\ell \| \varphi (\alpha_{s_k} (h_{s_k} )) - \psi (\alpha_{s_k} (h_{s_k} )) \|_2 \\
&\leq \sum_{k=1}^\ell \big( \| \varphi (\alpha_{s_k} (h_{s_k} )) - \sigma_{s_k} (\varphi (h_{s_k} )) \|_2
+ \| \sigma_{s_k} (\varphi (h_{s_k} ) - \psi (h_{s_k} )) \|_2 \\
&\hspace*{30mm} \ + \| \sigma_{s_k} (\psi (h_{s_k} )) - \psi (\alpha_{s_k} (h_{s_k} )) \|_2 \big) \\
&\leq (2\cdot 2^\ell\delta + 2^\ell\varepsilon' )\ell \leq 2^{\ell+1}\varepsilon'\ell,
\end{align*}
so that, for $q\in\{q_1, \dots, q_R\}$, since $F\supseteq E$ and $p_1=1$,
\begin{align*}
\| \varphi (q' ) - \psi (q' ) \|_2
&\leq \sum_{g\in\cS_{E, \ell}} | d_{q,g} |\cdot \|\varphi(g)-\psi(g)\|_2\\
%\bigg\| \varphi \bigg( \prod_{k=1}^\ell \alpha_{s_k} (h_k )\bigg) - \psi \bigg( \prod_{k=1}^\ell \alpha_{s_k} (h_k )\bigg) \bigg\|_2 \\
&\leq M \sum_{g\in\cS_{E, \ell}} \|\varphi(g)-\psi(g)\|_2\\
%\sum_{(h,s)\in\cS^\ell \times E^\ell} \bigg\| \varphi \bigg( \prod_{k=1}^\ell \alpha_{s_k} (h_k )\bigg)
%- \psi \bigg( \prod_{k=1}^\ell \alpha_{s_k} (h_k )\bigg) \bigg\|_2 \\
&\leq M\ell^\ell |E|^\ell \cdot 2^{\ell+1}\varepsilon' \ell \\
&= \frac{\varepsilon}{4R},
\end{align*}
and hence
\begin{align*}
\rho_\cT (\varphi , \psi ) &= \sum_{j=1}^{\infty} \frac{1}{2^j}\| \varphi (q_j) - \psi (q_j) \|_2 \\
&\leq \sum_{j=1}^R \| \varphi (q_j) - \psi (q_j) \|_2 +\frac{1}{2^{R-1}}\\
&\leq \sum_{j=1}^R \big( \| \varphi (q_j - q'_j ) \|_2
+ \| \varphi (q'_j ) - \psi (q'_j ) \|_2 + \| \psi (q'_j - q_j) \|_2 \big) +\frac{1}{2^{R-1}}\\
&\leq 2\sum_{j=1}^R \| q_j - q'_j \|_{\infty} + \frac{\varepsilon}{4} +\frac{1}{2^{R-1}}\\
%&\leq 2|\cT |\delta + \frac{\varepsilon}{2} \\
&< \frac{\varepsilon}{3} + \frac{\varepsilon}{4} +\frac{\varepsilon}{3}< \varepsilon .
\end{align*}
Thus any $\varepsilon$-separated subset of $\Hom (\cT ,FD, \delta' ,\sigma )$
with respect to $\rho_\cT$ is  $\varepsilon'$-separated with respect to $\rho_\cS$,
and so
\begin{align*}
N_\varepsilon (\Hom (\cT ,FD, \delta', \sigma ),\rho_\cT )\leq N_{\varepsilon'} (\Hom (\cS ,F, \delta, \sigma ),\rho_\cS ).
\end{align*}
Consequently
$h^{\varepsilon}_\Sigma (\cT, FD, \delta') \leq h^{\varepsilon'}_\Sigma (\cS, F, \delta)$, as desired.
%Thus $L$ is an $\varepsilon$-net for $\Hom (\cT ,FD, \delta'' ,\sigma )$
%with respect to $\rho_{\cT}$, and so
%\begin{align*}
%N_\varepsilon (\Hom (\cT ,FD, \delta'' ,\sigma ),\rho_\cT ) \leq |L|
%\leq |K|
%&= N_{\varepsilon'} (\Hom (\cS ,F, \delta' , \sigma ),\rho_\cS ) .
%%&\leq N_{\varepsilon'} (\Hom (\cS ,F, \delta , \sigma ),\rho_\cS )
%\end{align*}
%Consequently
%\begin{align*}
%h_\Sigma (\cT )
%\leq h_\Sigma^\varepsilon (\cT ) + \kappa /2
%&\leq h_\Sigma^\varepsilon (\cT ,FD,\delta'' ) + \kappa /2 \\
%&\leq h_\Sigma^{\varepsilon'} (\cS ,F,\delta' ) + \kappa /2 \\
%&\leq h_\Sigma^{\varepsilon'} (\cS ) + \kappa \\
%&\leq h_\Sigma (\cS ) + \kappa .
%\end{align*}
%Since $\kappa$ was an arbitrary positive number we conclude
%that $h_{\Sigma ,\mu} (\cT ) \leq h_{\Sigma ,\mu} (\cS )$, as desired.
\end{proof}

Note that the above theorem can also be established, less directly, by combining Theorem~\ref{T-gen comparison} with
the local formula established in the proof of the variational principle in Section~\ref{S-variational}.

Since we are assuming $X$ to be a compact metrizable space,
there always exists a sequence in the unit ball of $C_{\Rb}(X)$ that generates $C(X)$ as a unital $C^*$-algebra.
%dynamically generating finite partition of unity in $L^\infty (X,\mu )$. Even without any dynamical
%considerations we can construct a finite partition of unity that generates $L^\infty (X,\mu )$
%as a von Neumann algebra. Indeed we can identify $(X,\mu )$ with a subset of
%$[0,1]$ consisting of a subinterval with Lebesgue measure and at most countably many atoms and take our partition
%of unity to consist of the functions $x\mapsto x$ and $x\mapsto 1-x$.
In view of Theorem~\ref{T-gen comparison top} we can thus define the topological entropy
of our system with respect to $\Sigma$ as follows.

\begin{definition}\label{D-global topological}
The topological entropy $h_\Sigma (X,G)$ of the system $(X,G)$ with respect to $\Sigma$ is defined as
the common value of $h_\Sigma (\cS )$ over all dynamically generating sequences $\cS$ in the unit ball of $C_{\Rb}(X)$.
\end{definition}

%It is easily seen from the definition that $h_\Sigma (\cS )$ depends only on the image of $\cS$, and so we
%make the following definition.
%
%\begin{definition}\label{D-set top}
%For a countable subset $\cP$ of the unit ball of $C_\Rb (X)$ we define $h_\Sigma (\cP )$ as the common value
%of $h_\Sigma (\cS )$ over all sequences $\cS$ whose image is $\cP$.
%\end{definition}

Since $h_\Sigma (\cS )$ depends only on the image of $\cS$ by Definition~\ref{D-entropy top},
we can define $h_\Sigma (\cP )$
for a countable subset $\cP$ of the unit ball of $C_\Rb (X)$ as the common value of $h_\Sigma (\cS )$
over all sequences $\cS$ whose image is equal to $\cP$.

Suppose now that $\cP$ is a finite partition of unity in $C(X)$. Then, as in the measurable case,
we can proceed more simply as follows.
For a $d\in\Nb$ we define on the set of unital positive linear maps
from some unital self-adjoint linear subspace of $C(X)$ containing $\spn (\cP )$ to $\Cb^d$ the pseudometric
\begin{align*}
\rho_{\cP} (\varphi , \psi ) &= \max_{p\in\cP} \| \varphi (p) - \psi (p) \|_2 .
\end{align*}

\begin{definition}\label{D-finite top}
Let $\sigma$ be a map from $G$ to $\Sym (d)$ for some $d\in\Nb$.
Let $\cP$ be a finite partition of unity in $C(X)$, $F$ a nonempty finite subset of $G$,
and $\delta > 0$. Define $\Hom (\cP , F,\delta ,\sigma )$ to be
the set of all unital homomorphisms $\varphi : C(X) \to \Cb^d$ such that
\[
\| \varphi\circ\alpha_s (p) - \sigma_s \circ\varphi (p) \|_2 < \delta
\]
for all $p\in\cP$ and $s\in F$.
We then define $h_\Sigma^\varepsilon (\cP ,F,\delta )$,
$h_\Sigma^\varepsilon (\cP ,F)$, $h_\Sigma^\varepsilon (\cP )$, and
$h_\Sigma (\cP )$ by formally substituting $\cP$ for $\cS$ in Definition~\ref{D-entropy top}.
\end{definition}

It is easily seen that $h_\Sigma (\cP )$ as defined above is equal to $h_\Sigma (\cS )$ for any sequence $\cS$
whose image as a function on $\Nb$ is equal to $\cP$, and so the notation $h_\Sigma (\cP )$ is unambiguous.

We next observe that, for the purpose of defining $h_\Sigma (\cP )$,
as well as the prior sequential and measure versions of it,
it is possible to substitute the $\infty$-norm for the $2$-norm in the definition of $\rho_\cP$.
This will be used to estimate $h_\Sigma (\cP )$ in the proof of Lemma~\ref{L-algebraic lower bound},
which is the motivation for explaining this substitution here in the present topological context.
So for a given $d\in\Nb$ we define on the set of unital positive linear maps
from some unital self-adjoint linear subspace of $C(X)$ containing $\spn (\cP )$ to $\Cb^d$ the pseudometric
\begin{align*}
\rho_{\cP ,\infty} (\varphi , \psi ) &= \max_{p\in\cP} \| \varphi (p) - \psi (p) \|_\infty .
\end{align*}
and record the following.

\begin{proposition}\label{P-infinity norm}
Let $\cP$ be a finite partition of unity in $C(X)$. Then
\[
h_\Sigma (\cP ) = \sup_{\varepsilon > 0} \inf_{F} \inf_{\delta > 0}
\limsup_{i\to\infty} \frac{1}{d_i} \log N_\varepsilon (\Hom (\cP ,F,\delta ,\sigma_i ), \rho_{\cP ,\infty} )
\]
where $F$ ranges over the nonempty finite subsets of $G$.
\end{proposition}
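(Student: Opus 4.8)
The plan is to prove the two inequalities separately; the only real content is a purely metric comparison between $\rho_{\cP}$ and $\rho_{\cP,\infty}$ on subsets of $X^d$, with no dynamical input beyond what is already packaged into $\Hom(\cP,F,\delta,\sigma)$.

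\emph{The easy inequality.} Since $\|\cdot\|_2\leq\|\cdot\|_\infty$ on $\Cb^d$ we have $\rho_{\cP}\leq\rho_{\cP,\infty}$ pointwise, so any $\varepsilon$-separated set for $\rho_{\cP}$ is $\varepsilon$-separated for $\rho_{\cP,\infty}$, whence $N_\varepsilon(\Hom(\cP,F,\delta,\sigma),\rho_{\cP})\leq N_\varepsilon(\Hom(\cP,F,\delta,\sigma),\rho_{\cP,\infty})$ for all $F$, $\delta$, $\sigma$. Passing to $\limsup_i$ with $\sigma=\sigma_i$, then $\inf_F\inf_{\delta>0}$, then $\sup_{\varepsilon>0}$ shows that $h_\Sigma(\cP)$ is bounded above by the right-hand side of the asserted identity.

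\emph{The hard inequality.} Fix $\varepsilon>0$ and set $\rho^X(x,y)=\max_{p\in\cP}|p(x)-p(y)|$, a continuous pseudometric on $X$ which induces $\rho_{\cP,\infty}$ as a coordinatewise supremum and $\rho_{\cP}$ as a normalized $\ell^2$-average over the $d$ point evaluations defining a homomorphism $C(X)\to\Cb^d$. By compactness choose a finite cover $U_1,\dots,U_r$ of $X$ by sets of $\rho^X$-diameter $<\varepsilon/3$, with $r$ depending on $\varepsilon$, $X$, $\cP$ but not on $d$. Let $0<\varepsilon'<\varepsilon$, to be sent to $0$ at the end, and put $\beta=4|\cP|\varepsilon'^2/\varepsilon^2$. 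Fix $\sigma:G\to\Sym(d)$, a nonempty finite $F\subseteq G$, and $\delta>0$, and let $L'$ be a maximal $\rho_{\cP}$-$\varepsilon'$-separated subset of $\Hom(\cP,F,\delta,\sigma)$, so that $|L'|\leq N_{\varepsilon'}(\Hom(\cP,F,\delta,\sigma),\rho_{\cP})$ and $L'$ is $\rho_{\cP}$-$\varepsilon'$-spanning. Given any $\rho_{\cP,\infty}$-$\varepsilon$-separated subset $L$ of $\Hom(\cP,F,\delta,\sigma)$, I would attach to each $\varphi\in L$, identified with $(x_a)_{a=1}^d\in X^d$, a partner $\psi\in L'$ with $\rho_{\cP}(\varphi,\psi)\leq\varepsilon'$, and a code consisting of the exceptional set $B(\varphi)=\{a:\rho^X(x_a,y_a)\geq\varepsilon/2\}$, where $\psi\leftrightarrow(y_a)$, together with, for each $a\in B(\varphi)$, the index of some $U_j$ containing $x_a$. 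A Chebyshev estimate applied coordinatewise to each $p\in\cP$ gives $|B(\varphi)|\leq\beta d$, and if two elements $\varphi,\varphi'$ of $L$ have the same partner and the same code then $\rho_{\cP,\infty}(\varphi,\varphi')<\varepsilon$ --- on $B(\varphi)=B(\varphi')$ both points lie in a common $U_j$, and off this set both are $\rho^X$-within $\varepsilon/2$ of the common $y_a$ --- forcing $\varphi=\varphi'$. Hence $|L|\leq|L'|\sum_{b\leq\beta d}\binom{d}{b}r^b$, and by Stirling's approximation $\sum_{b\leq\beta d}\binom{d}{b}r^b\leq e^{\kappa(\varepsilon')d}$ up to a factor polynomial in $d$, where $\kappa(\varepsilon')=H(\beta)+\beta\log r$, with $H$ the binary entropy function, tends to $0$ as $\varepsilon'\to0$.

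\emph{Conclusion.} Taking $L$ of maximal cardinality and $\sigma=\sigma_i$, dividing by $d_i$, and passing to $\limsup_i$ absorbs the polynomial factor (since $d_i\to\infty$) and gives $\limsup_i\tfrac{1}{d_i}\log N_\varepsilon(\Hom(\cP,F,\delta,\sigma_i),\rho_{\cP,\infty})\leq\kappa(\varepsilon')+h_\Sigma^{\varepsilon'}(\cP,F,\delta)$. Taking $\inf_F\inf_{\delta>0}$ yields $\leq\kappa(\varepsilon')+h_\Sigma^{\varepsilon'}(\cP)\leq\kappa(\varepsilon')+h_\Sigma(\cP)$, and sending $\varepsilon'\to0$ removes $\kappa(\varepsilon')$; a final $\sup_{\varepsilon>0}$ shows the right-hand side of the identity is at most $h_\Sigma(\cP)$, completing the proof in combination with the easy inequality. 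I expect the one delicate point to be verifying that $\kappa(\varepsilon')\to0$, i.e. that the combinatorial cost of recording the exceptional coordinates vanishes as $\varepsilon'\to0$ with $\varepsilon$ (hence $r$) held fixed; everything else is routine bookkeeping, and in particular no use is made of the asymptotic multiplicativity or freeness of $\Sigma$ --- only the crude fact that $d_i\to\infty$.
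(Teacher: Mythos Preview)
Your proof is correct and follows essentially the same approach as the paper: both use a Chebyshev/Markov bound to show that two homomorphisms close in $\rho_{\cP}$ can differ significantly in $\rho^X$ on at most a small fraction of coordinates, and then count the number of possible configurations on this exceptional set. The only cosmetic difference is that the paper exploits the fact that each $p\in\cP$ takes values in $[0,1]$ to discretize $[0,1]^{|\cP|}$ at scale $\sqrt{\varepsilon}$ on the bad coordinates, whereas you use a finite $\rho^X$-cover of $X$ of mesh $\varepsilon/3$; your version is in fact slightly more robust since it uses only compactness of $X$ and not the $[0,1]$-valuedness of partition-of-unity elements.
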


\begin{proof}
Since $\| \cdot \|_\infty$ dominates $\| \cdot \|_2$ in $\Cb^d$, the right side of the equality dominates the left side.

For the reverse inequality, observe that, given a $\varphi\in\Hom (\cP ,F,\delta ,\sigma )$
for some nonempty finite set $F\subseteq G$, $\delta > 0$, and $\sigma : G\to\Sym (d)$,
every element of $\Hom (\cP ,F,\delta ,\sigma )$ in the $(\rho_{\cP} ,\varepsilon )$-neighbourhood
of $\varphi$ agrees with $\varphi$ on $\cP$ to within $\sqrt{\varepsilon}$ on a subset of $\{ 1,\dots ,d \}$
of cardinality at least $(1 - | \cP |\varepsilon )d$. Since $\psi (p)(a) \in [0,1]$ for all
$\psi\in\Hom (\cP ,F,\delta ,\sigma )$, $p\in\cP$, and $a\in \{ 1,\dots ,d\}$,
it follows that the maximal cardinality of a $(\rho_{\cP ,\infty} ,2\sqrt{\varepsilon})$-separated
subset of the $(\rho_{\cP} ,\varepsilon )$-neighbourhood of $\varphi$ is at most
$\sum_{k=0}^{\lfloor | \cP |\varepsilon d \rfloor} \binom{d}{k} \varepsilon^{-| \cP | k/2}$,
and by Stirling's approximation this number is bounded above by
$a\exp(\beta d) \varepsilon^{-| \cP |^2\varepsilon d/2}$
for some $a,\beta > 0$ not depending on $d$ with $\beta\to 0$ as $\varepsilon\to 0$. Consequently
\[
N_{2\sqrt{\varepsilon}} (\Hom (\cP ,F,\delta ,\sigma ),\rho_{\cP , \infty} )
\leq a\exp(\beta d) \varepsilon^{-| \cP |^2 \varepsilon d/2}
N_\varepsilon (\Hom (\cP ,F,\delta ,\sigma ),\rho_\cP ) .
\]
and hence
\begin{align*}
\limsup_{i\to\infty} \frac{1}{d_i} \log N_{2\sqrt{\varepsilon}} (\Hom (\cP ,F,\delta ,\sigma_i ),\rho_{\cP , \infty} )
&\leq h_\Sigma^\varepsilon (\cP ,F,\delta ) + \beta - | \cP |^2 \varepsilon \log \sqrt{\varepsilon}
\end{align*}
Since $\beta - | \cP |^2 \varepsilon \log \sqrt{\varepsilon} \to 0$ as $\varepsilon\to 0$,
we obtain the desired inequality.
\end{proof}

In the case that $\cP$ is a partition of unity in $C(X)$ consisting of projections, we can also express
$h_\Sigma (\cP )$ by dispensing with the $\varepsilon$
and simply counting unital homomorphisms, as we record below in Proposition~\ref{P-card top} (cf.\ Remark~\ref{R-hom}).
First we state the following topological version of Lemma~\ref{L-infinite proj}, which can be established
by a similar argument.

\begin{lemma}\label{L-proj top}
For every $\kappa > 0$ and $n\in\Nb$ there is an $\varepsilon > 0$ such that every partition of unity
$\cP\subseteq C(X)$ consisting of at most $n$ projections satisfies
\[
\limsup_{i\to\infty} \frac{1}{d_i} N_0 (\Hom (\cP , F,\delta ,\sigma ),\rho_{\cP})
\leq h_\Sigma^\varepsilon (\cP ,F,\delta ) + \kappa
\]
for all nonempty finite sets $F\subseteq G$ and $\delta > 0$.
\end{lemma}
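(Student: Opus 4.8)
The plan is to run, in this simple finite setting, the covering/counting estimate underlying Lemma~\ref{L-infinite proj}; no conditional expectation and no change of parameters is needed here, since the asserted inequality just says that $N_0$ is bounded by $N_\varepsilon$ times a factor $\exp(\kappa d)$ for the \emph{same} set $\Hom(\cP,F,\delta,\sigma)$.

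First I would identify what the pseudometric $\rho_\cP$ records. Write $\cP=\{p_1,\dots,p_k\}$ with $k\le n$. A unital homomorphism $\varphi\colon C(X)\to\Cb^d$ is a $d$-tuple of point evaluations $\varphi(f)(a)=f(x_a)$, and since each $p_j$ is a projection in $C(X)$, i.e.\ a $\{0,1\}$-valued function, with $\sum_j p_j=1$, the restriction of $\varphi$ to $\spn(\cP)$ is encoded by the pattern $c_\varphi\colon\{1,\dots,d\}\to\{1,\dots,k\}$ sending $a$ to the unique index $j$ with $p_j(x_a)=1$. Hence $N_0(\Hom(\cP,F,\delta,\sigma),\rho_\cP)$ is exactly the number of distinct patterns arising from elements of $\Hom(\cP,F,\delta,\sigma)$. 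If $\rho_\cP(\varphi,\psi)<\varepsilon$, then for each $j$ the functions $a\mapsto p_j(x_a^\varphi)$ and $a\mapsto p_j(x_a^\psi)$ disagree on fewer than $\varepsilon^2 d$ coordinates; since any $a$ with $c_\varphi(a)\ne c_\psi(a)$ is a disagreement coordinate for both $p_{c_\varphi(a)}$ and $p_{c_\psi(a)}$ (orthogonality of the $p_j$), the patterns $c_\varphi$ and $c_\psi$ differ on fewer than $n\varepsilon^2 d/2$ coordinates.

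Next I would do the counting. Fixing $\varphi\in\Hom(\cP,F,\delta,\sigma)$, the number of patterns $c_\psi$ with $\psi\in\Hom(\cP,F,\delta,\sigma)$ and $\rho_\cP(\varphi,\psi)<\varepsilon$ is at most $\sum_{j=0}^{\lfloor n\varepsilon^2 d/2\rfloor}\binom{d}{j}n^j$ (choose the differing coordinates, then reassign values in $\{1,\dots,k\}$ there). By the standard Stirling estimate this is at most $\exp(\gamma d)$ where $\gamma=\gamma(n,\varepsilon)$ depends only on $n$ and $\varepsilon$ and satisfies $\gamma\to 0$ as $\varepsilon\to 0$ for fixed $n$; so I fix once and for all an $\varepsilon>0$ depending only on $\kappa$ and $n$ with $\gamma<\kappa$. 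Taking a maximal $\varepsilon$-separated subset $E$ of $\Hom(\cP,F,\delta,\sigma)$ for $\rho_\cP$, every element of $\Hom(\cP,F,\delta,\sigma)$ lies within $\varepsilon$ of some member of $E$, so the total number of patterns is at most $|E|\exp(\kappa d)$, i.e.
\[
N_0(\Hom(\cP,F,\delta,\sigma),\rho_\cP)\le \exp(\kappa d)\,N_\varepsilon(\Hom(\cP,F,\delta,\sigma),\rho_\cP).
\]
Applying this with $\sigma=\sigma_i$, dividing by $d_i$, taking logarithms, and letting $i\to\infty$ then yields
\[
\limsup_{i\to\infty}\frac{1}{d_i}\log N_0(\Hom(\cP,F,\delta,\sigma_i),\rho_\cP)\le h_\Sigma^\varepsilon(\cP,F,\delta)+\kappa ,
\]
which is the assertion.

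There is no serious obstacle. The only points that need care are keeping the Stirling estimate uniform over all partitions of unity into at most $n$ projections and over all $F$ and $\delta$ — which it is, since the bound involves only $n$, $\varepsilon$, and $d$ — and the elementary covering inequality $N_0\le N_\varepsilon\cdot(\text{maximal number of patterns in an open }\varepsilon\text{-ball})$ that converts the ball count into the comparison with $N_\varepsilon$.
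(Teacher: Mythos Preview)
Your proposal is correct and is precisely the argument the paper has in mind: the paper does not give a separate proof but says Lemma~\ref{L-proj top} ``can be established by a similar argument'' to Lemma~\ref{L-infinite proj}, and what you have written is exactly the finite, topological simplification of that argument (no tail term $M_2$, no conditional-expectation map $\Gamma$, since both sides of the inequality refer to the same set $\Hom(\cP,F,\delta,\sigma)$). One cosmetic point: with a maximal $\varepsilon$-separated set you only get $\rho_\cP(\varphi,\psi)\le\varepsilon$ for the covering, so the pattern-difference bound should read ``at most $n\varepsilon^2 d/2$'' rather than ``fewer than''; this does not affect the Stirling estimate or the conclusion.
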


Lemma~\ref{L-proj top} readily yields the desired formula:
%and \ref{L-Hom ineq top}
%in the same way that
%Proposition~\ref{P-Hom} follows from Lemmas~\ref{L-proj} and \ref{L-Hom ineq}.

\begin{proposition}\label{P-card top}
Let $\cP$ be a finite partition of unity in $C(X)$ consisting of projections. Then
\[
h_\Sigma (\cP ) = \inf_{F} \inf_{\delta > 0}
\limsup_{i\to\infty} \frac{1}{d_i} N_0 (\Hom (\cP ,F,\delta ,\sigma_i ),\rho_{\cP} )
\]
where $F$ ranges over all nonempty finite subsets of $G$.
\end{proposition}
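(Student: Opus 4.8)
The proof will be short: it combines Lemma~\ref{L-proj top} (stated just above) with the elementary fact that, for $\varepsilon>0$, an $\varepsilon$-separated set consists of points at pairwise positive distance. Write $n=|\cP|$.

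First I would dispose of the inequality ``$\leq$''. For any $\varepsilon>0$, any nonempty finite $F\subseteq G$, any $\delta>0$, and any $i$, a subset of $\Hom(\cP,F,\delta,\sigma_i)$ that is $\varepsilon$-separated with respect to $\rho_\cP$ has all pairwise $\rho_\cP$-distances positive, hence its cardinality is at most $N_0(\Hom(\cP,F,\delta,\sigma_i),\rho_\cP)$; thus $N_\varepsilon(\Hom(\cP,F,\delta,\sigma_i),\rho_\cP)\le N_0(\Hom(\cP,F,\delta,\sigma_i),\rho_\cP)$. Applying $\tfrac{1}{d_i}\log(\cdot)$, taking $\limsup_{i\to\infty}$, and then the infima over $\delta>0$ and over $F$ gives $h_\Sigma^\varepsilon(\cP)\le\inf_F\inf_{\delta>0}\limsup_{i\to\infty}\tfrac{1}{d_i}\log N_0(\Hom(\cP,F,\delta,\sigma_i),\rho_\cP)$; since the right-hand side does not depend on $\varepsilon$, taking $\sup_{\varepsilon>0}$ yields $h_\Sigma(\cP)\le\inf_F\inf_{\delta>0}\limsup_{i\to\infty}\tfrac{1}{d_i}\log N_0(\Hom(\cP,F,\delta,\sigma_i),\rho_\cP)$. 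For the reverse inequality I would fix $\kappa>0$ and apply Lemma~\ref{L-proj top} with this $\kappa$ and with $n=|\cP|$, obtaining an $\varepsilon>0$ (depending only on $\kappa$ and $n$, hence uniform in $F$ and $\delta$) with $\limsup_{i\to\infty}\tfrac{1}{d_i}\log N_0(\Hom(\cP,F,\delta,\sigma_i),\rho_\cP)\le h_\Sigma^\varepsilon(\cP,F,\delta)+\kappa$ for all nonempty finite $F\subseteq G$ and all $\delta>0$. Taking $\inf_{\delta>0}$ and then $\inf_F$ on both sides, and using $h_\Sigma^\varepsilon(\cP)\le h_\Sigma(\cP)$, gives $\inf_F\inf_{\delta>0}\limsup_{i\to\infty}\tfrac{1}{d_i}\log N_0(\Hom(\cP,F,\delta,\sigma_i),\rho_\cP)\le h_\Sigma(\cP)+\kappa$, and letting $\kappa\to 0$ finishes the proof.

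Thus the entire substance of the proposition is already contained in Lemma~\ref{L-proj top}, and that is where I expect the one genuine obstacle to sit; the lemma itself is proved by copying the scheme of Lemma~\ref{L-infinite proj}. Concretely, one shows that every element of a $(\rho_\cP,\varepsilon)$-neighbourhood of a fixed $\varphi_0\in\Hom(\cP,F,\delta,\sigma_i)$ is determined, modulo zero $\rho_\cP$-distance, by a bounded amount of combinatorial data: for each $p\in\cP$ the projection $\varphi(p)\in\Cb^{d_i}$ is $0/1$-valued coordinatewise, and $\|\varphi(p)-\varphi_0(p)\|_2<\varepsilon$ forces $\varphi(p)$ and $\varphi_0(p)$ to disagree at fewer than $c\varepsilon^2 d_i$ coordinates (with $c$ depending only on the weighting, hence only on $n$). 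Hence the number of points of such a neighbourhood, counted modulo zero distance, is at most $\bigl(\binom{d_i}{\lfloor c\varepsilon^2 d_i\rfloor}2^{\lfloor c\varepsilon^2 d_i\rfloor}\bigr)^{n}$, which by Stirling's approximation is $\le a\exp(\kappa d_i)$ once $\varepsilon$ is small enough in terms of $\kappa$ and $n$, for some $a>0$ independent of $d_i$. Because the elements of $\cP$ are projections, no accounting of a measure-like distribution across the coordinates is required — this is precisely the step where Lemma~\ref{L-infinite proj} must introduce its factor $M_2$ — so the argument here is correspondingly shorter, and the main care needed is simply to verify that the exponential rate $\kappa$ produced by Stirling's formula tends to $0$ as $\varepsilon\to 0$ uniformly over all partitions of unity into at most $n$ projections.
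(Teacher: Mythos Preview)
Your proposal is correct and follows exactly the approach the paper indicates: the paper simply states that ``Lemma~\ref{L-proj top} readily yields the desired formula'' without further detail, and you have correctly filled in the routine argument (the trivial inequality $N_\varepsilon\le N_0$ for one direction, and taking infima and letting $\kappa\to 0$ in Lemma~\ref{L-proj top} for the other). Your sketch of why Lemma~\ref{L-proj top} holds is also accurate, though note that for a finite $\cP$ the pseudometric $\rho_\cP$ is the maximum rather than a weighted sum, so the constant $c$ is simply $1$.
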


\begin{example}\label{E-Bernoulli}
Consider the Bernoulli action of $G$ on $X=\{ 1,\dots ,k \}^G$ by left translation
for some $k\in\Nb$. Then $h_\Sigma (X,G) = \log k$ for any sofic approxmation sequence $\Sigma$,
which can be seen as follows.
Set $\cP = \{ p_1 , \dots ,p_k \}$ where $p_i$ is the
characteristic function of the set of all $(x_s )_{s\in G}$ such that $x_e = i$.
Then $\cP$ is a dynamically generating partition of unity in $C(X)$ consisting of projections.
Let $\sigma$ be a map from $G$ to $\Sym (d)$ for some $d\in\Nb$. Let $F$ be a nonempty finite subset of $G$
containing $e$ and let $\delta > 0$. Note that there are $k^d$ unital
homomorphisms from $\spn (\cP ) \cong \Cb^k$ to $\Cb^d$.
Let $\varphi$ be such a homomorphism.
For every $\omega\in \{ 1,\dots ,k \}^F$ the projection $\prod_{s\in F} \alpha_s (p_{\omega (s)} )$
is nonzero and so we can set
\[
\tilde{\varphi} \bigg( \prod_{s\in F} \alpha_s (p_{\omega (s)} ) \bigg)
= \prod_{s\in F} \sigma_s (\varphi (p_{\omega (s)} ))
\]
and extend linearly to define a unital homomorphism $\tilde{\varphi}$ from the unital
$C^*$-subalgebra $\spn (\cP_F )$ of $C(X)$ into $\Cb^d$,
where $\cP_F$ denotes the set of all products of the form $\prod_{s\in F} \alpha_s (p_{\omega (s)} )$
for $\omega\in \{ 1,\dots ,k \}^F$. We furthermore extend $\tilde{\varphi}$ arbitrarily to a unital
homomorphism $C(X)\to\Cb^d$, which we again denote by $\tilde{\varphi}$ (this can be done using
the Gelfand theory of commutative $C^*$-algebras mentioned in the introduction).
%and the fact that unital homomorphisms from a unital commutative $C^*$-algebra of continuous functions
%into $\Cb^d$ consist of $d$-tuples of point evaluations).
It is then readily checked that
$\tilde{\varphi} \circ\alpha_s (f) = \sigma_s \circ\tilde{\varphi} (f)$ for all $f\in \spn (\cP )$.
Therefore $N_0 (\Hom (\cP ,F,\delta ,\sigma ),\rho_\cP ) = k^d$, and so we conclude in view of
Proposition~\ref{P-card top} that $h_\Sigma (X,G) = h_\Sigma (\cP ) = \log k$.
\end{example}

A problem of Gottschalk asks which countable groups $G$ are surjunctive, i.e., have the property that
for every finite nonempty set $A$ the action of $G$ on $A^G$ by left translation is surjunctive,
which means that every injective $G$-equivariant continuous map $A^G \to A^G$ is surjective \cite{Got73}.
As observed by Gromov \cite[Subsect.\ 5.M$'''$]{Gro99} (see also Section~1 of \cite{Wei00}), the surjunctivity of
amenable $G$ follows from the fact that the classical topological entropy of a proper subshift is
strictly less than that of the full shift.
Using different means, Gromov showed more generally in \cite{Gro99} that
all countable sofic groups are surjunctive (see also Section~3 of \cite{Wei00}). In fact it is in \cite{Gro99}
that the concept of a sofic group originates, with the terminology being coined by Weiss in \cite{Wei00}.
Now that we have a definition of topological entropy for actions of any countable sofic group, we can
give an entropy proof of Gromov's result like in the amenable case. In view of Example~\ref{E-Bernoulli},
it suffices to observe the following.

\begin{theorem}
Let $G$ be a countable sofic group and let $\Sigma = \{ \sigma_i : G\to\Sym (d_i ) \}_{i=1}^\infty$
be a sofic approximation sequence for $G$.
Let $A$ be a nonempty finite set and let $\alpha$ be the restriction of the left shift action of $G$ on $A^G$ to some closed
$G$-invariant proper subset $X$. Then $h_{\Sigma}(X, G)<\log |A|$.
\end{theorem}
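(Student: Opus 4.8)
The plan is to work with the canonical clopen partition of $X$ and reduce the problem to a combinatorial count. Enumerate $A=\{1,\dots,k\}$ and let $\cP=\{p_1,\dots,p_k\}$, where $p_a\in C(X)$ is the characteristic function of $\{x\in X:x_e=a\}$. Then $\cP$ is a dynamically generating partition of unity in $C(X)$ consisting of projections, so by Definition~\ref{D-global topological} (together with Theorem~\ref{T-gen comparison top}) we have $h_\Sigma(X,G)=h_\Sigma(\cP)$, and by Proposition~\ref{P-card top},
\[
h_\Sigma(\cP)=\inf_F\inf_{\delta>0}\limsup_{i\to\infty}\frac{1}{d_i}\log N_0(\Hom(\cP,F,\delta,\sigma_i),\rho_\cP),
\]
the infimum being over all nonempty finite $F\subseteq G$. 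A unital homomorphism $\varphi:C(X)\to\Cb^d$ is given by a tuple of points $x^{(1)},\dots,x^{(d)}\in X$ via $\varphi(f)(a)=f(x^{(a)})$, and $\rho_\cP(\varphi,\psi)=0$ precisely when the ``label functions'' $a\mapsto x^{(a)}_e$ coincide; hence $N_0(\Hom(\cP,F,\delta,\sigma),\rho_\cP)$ is the number of distinct functions $\psi:\{1,\dots,d\}\to A$ arising as $a\mapsto x^{(a)}_e$ from some approximately equivariant tuple. It therefore suffices to produce a finite $F\ni e$, a small $\delta>0$, and a fixed $c>0$ so that this count is at most $|A|^d e^{-cd}$ whenever $\sigma$ is a good enough sofic approximation.

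Since $X$ is a proper closed subset of $A^G$, its complement contains a nonempty basic clopen cylinder, so there are a finite $F_0\subseteq G$, which we enlarge so that $e\in F_0$, and $\omega_0\in A^{F_0}$ such that no $x\in X$ satisfies $x|_{F_0}=\omega_0$. Fix $\varphi\in\Hom(\cP,F,\delta,\sigma)$ with $F\supseteq F_0$, corresponding to $x^{(1)},\dots,x^{(d)}$ and label function $\psi$. Unwinding $\|\varphi\circ\alpha_s(p_b)-\sigma_s\circ\varphi(p_b)\|_2<\delta$: for $s\in F_0$, $b\in A$ one has $\varphi(\alpha_s p_b)(a)=[x^{(a)}_s=b]$ and $(\sigma_s\circ\varphi(p_b))(a)=[\psi(\sigma_s^{-1}(a))=b]$, so summing over $b\in A$ and then over $s\in F_0$ shows that the set of $a$ with $x^{(a)}_t\neq\psi(\sigma_t^{-1}(a))$ for some $t\in F_0$ has cardinality less than $\delta'd$, where $\delta':=|A|\,|F_0|\,\delta^2$. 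For every other $a$, the pattern $\theta_\psi(a):=(\psi(\sigma_t^{-1}(a)))_{t\in F_0}\in A^{F_0}$ equals $x^{(a)}|_{F_0}$, hence differs from $\omega_0$ because $x^{(a)}\in X$. Thus every $\psi$ arising from $\Hom(\cP,F,\delta,\sigma)$ lies in the set $\Psi_\delta$ of functions $\{1,\dots,d\}\to A$ for which $\theta_\psi(a)=\omega_0$ for fewer than $\delta'd$ values of $a$.

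It remains to bound $|\Psi_\delta|$. Assuming $\sigma$ is free enough on $F_0$, for all but $\eta d$ of the $a$ the indices $\{\sigma_t^{-1}(a):t\in F_0\}$ are pairwise distinct; call this set of $a$ by $G_0$. On $G_0$ form the graph in which $a\sim a'$ iff $a'=\sigma_t\sigma_s^{-1}(a)$ for some $s,t\in F_0$ (equivalently, the blocks $B_a:=\{\sigma_t^{-1}(a):t\in F_0\}$ and $B_{a'}$ meet); its maximum degree is at most $|F_0|^2$, so a greedy choice yields an independent set $W\subseteq G_0$ with $|W|\geq(1-\eta)d/(|F_0|^2+1)\geq c_0 d$ for a constant $c_0>0$ depending only on $|F_0|$, once $\eta$ is small. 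For $a\in W$ the blocks $B_a$ are pairwise disjoint, each of size $|F_0|$; set $V=\bigcup_{a\in W}B_a$, so $|V|=|F_0|\,|W|$. A function in $\Psi_\delta$ is determined by its restriction to $V$ together with arbitrary values at the $d-|V|$ coordinates outside $V$; the restriction to $V$ is an independent choice of $\psi|_{B_a}\in A^{B_a}$ over $a\in W$, and for fixed $a$ exactly one of the $|A|^{|F_0|}$ choices makes the block ``bad'' (i.e.\ $\theta_\psi(a)=\omega_0$), while the $\Psi_\delta$ constraint forces at most $\delta'd\leq(\delta'/c_0)|W|$ of the blocks to be bad. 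A standard binomial estimate then bounds the number of admissible $\psi|_V$ by $\mathrm{poly}(d)\cdot\binom{|W|}{\lfloor(\delta'/c_0)|W|\rfloor}(|A|^{|F_0|}-1)^{|W|}$, and since $(|A|^{|F_0|}-1)^{|W|}=|A|^{|F_0|\,|W|}e^{-\gamma|W|}$ with $\gamma:=-\log(1-|A|^{-|F_0|})>0$ depending only on $|A|,|F_0|$, choosing $\delta$ small enough that the binomial factor is at most $e^{(\gamma/2)|W|}$ gives $|\Psi_\delta|\leq\mathrm{poly}(d)\cdot|A|^d e^{-(\gamma/2)|W|}\leq\mathrm{poly}(d)\cdot|A|^d e^{-(\gamma c_0/2)d}$. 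Taking $\tfrac1{d_i}\log$ and $\limsup_{i\to\infty}$ yields $h_\Sigma(\cP)\leq\log|A|-\gamma c_0/2<\log|A|$, hence $h_\Sigma(X,G)<\log|A|$. The main obstacle is this last paragraph: one must set up the almost-disjoint ``tiling'' of $\{1,\dots,d\}$ by $F_0$-blocks correctly and run a Chernoff/entropy estimate producing a quantitative gap strictly below $\log|A|$; the reduction to counting label functions and the extraction of the forbidden pattern are routine.
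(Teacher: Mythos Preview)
Your argument is correct and follows essentially the same route as the paper's proof: both use the canonical clopen partition $\cP$, invoke Proposition~\ref{P-card top} to reduce to counting label functions, extract a forbidden $F_0$-pattern from the properness of $X$, use approximate equivariance to show each label function displays that pattern on fewer than $\delta' d$ indices, build a family of pairwise disjoint $F_0$-blocks from sofic freeness (you via a bounded-degree independence argument, the paper via a maximal disjoint family), and finish with the same Stirling/binomial count; your final bound $\log|A|-\gamma c_0/2$ is the student-friendly repackaging of the paper's $\log|A|+(1/|F|^2)\log\big((|A|^{|F|}-1)/|A|^{|F|}\big)$.
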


\begin{proof}
For each $a\in A$, denote the characteristic function of $\{x\in X: x_e=a\}$ by $p_a$.
Then $\cP=\{p_a: a\in A\}$ is a dynamically generating finite partition of unity in $C(X)$.
We may assume that $p_a \neq 0$ for each $a\in A$ by discarding all elements of $A$ which do not appear in
the coordinate description of any element of $X$.

Since $X$ is a proper subset of $A^G$, there exists some nonempty finite subset $F$ of $G$ such that $X_F\neq A^F$,
where $X_F$ denotes the set of restrictions of elements of $X$ to $F$.
To establish the theorem it enough to show that
\[
\inf_{\delta>0}\limsup_{i\to \infty}\frac{1}{d_i}\log N_0(\Hom(\cP, F, \delta, \sigma_i), \rho_{\cP})\le
\log |A|+(1/|F|^2)\log \bigg(\frac{|A|^{|F|}-1}{|A|^{|F|}}\bigg) .
\]

Fix an $f\in A^F\setminus X_F$. Then $\prod_{s\in F} \alpha_s(p_{f(s)})=0$.

Let $\delta>0$ be such that $(\delta|F|)^2<1/(4|F|^2)$.
Let $\sigma$ be a map from $G$ to $\Sym(d)$ for some $d\in \Nb$.
Let $W$ be a set of elements in $\Hom(\cP, F, \delta, \sigma)$ which pairwise are nonzero distance apart under $\rho_{\cP}$.
Then the restrictions to $\Cb\cP$ of any two distinct elements of $W$ are different.
Denote by $W'$ the set of restrictions of elements in $W$ to $\Cb\cP$. Then $|W|=|W'|$.
Note that there is a natural bijection between the set of
unital homomorphisms from $\Cb\cP$ to $\Cb^d$ and the set of partitions of $\{1, \dots, d\}$ indexed by $A$,
as we are assuming that $p_a \neq 0$ for all $a\in A$.
For each partition $Q=\{q_a: a\in A\}$ of $\{1, \dots, d\}$ indexed by $A$, the corresponding unital homomorphism
$\Cb\cP\rightarrow \Cb^d$ sends $p_a$ for $a\in A$ to the characteristic function of $q_a$.

Let $\varphi\in W$, and let $\varphi'\in W'$ be the restriction of $\varphi$ to $\Cb\cP$. Then
\begin{align*}
\bigg\|\prod_{s\in F} \sigma_s(\varphi(p_{f(s)}))\bigg\|_2
&\le \bigg\|\prod_{s\in F} \varphi(\alpha_s(p_{f(s)}))\bigg\|_2+\bigg\|\prod_{s\in F} \sigma_s(\varphi(p_{f(s)}))-\prod_{s\in F}
\varphi(\alpha_s(p_{f(s)}))\bigg\|_2\\
&\le \bigg\|\varphi\bigg(\prod_{s\in F} \alpha_s(p_{f(s)})\bigg)\bigg\|_2
+\sum_{s\in F}\bigg\|\sigma_s(\varphi(p_{f(s)}))- \varphi(\alpha_s(p_{f(s)}))\bigg\|_2\\
&< \delta |F|.
\end{align*}
Let $Q=\{q_a: a\in A\}$ be the partition of $\{1, \dots, d\}$ indexed by $A$ which corresponds to $\varphi'$.
Note that $\prod_{s\in F} \sigma_s(\varphi(p_{f(s)}))$ is the characteristic function of $\bigcap_{s\in F}\sigma_s(q_{f(s)})$.
Thus $\big|\bigcap_{s\in F}\sigma_s(q_{f(s)})\big|< (\delta|F|)^2d$.

Denote by $Z$ the set of all $n\in \{1, \dots, d\}$ such that $\sigma_s^{-1}(n)\neq \sigma_t^{-1}(n)$ for all distinct $s,t\in F$.
Let $0<\tau<1/2$. When $\sigma$ is a good enough sofic approximation of $G$, we have $|Z|\ge d(1-\tau)$.

For each $n\in Z$, denote by $V_n$ the set $\{\sigma_s^{-1}(n): s\in F\}$. Then $|V_n|=|F|$.
Take a maximal subset $Z'$ of $Z$ subject to the condition that for any distinct $m, n\in Z'$ the sets $V_n$ and $V_m$ are disjoint.
Then $Z\subseteq \bigcup_{s,t\in F}\sigma_s\sigma_t^{-1}(Z')$, and hence $|Z'|\ge |Z|/|F|^2\ge (1-\tau)d/|F|^2$.

Denote by $S$ the set of all partitions $Q'=\{q'_a: a\in A\}$ of $\{1, \dots, d\}$ indexed by $A$ for which there is some
$Z''\subseteq Z'$ satisfying $|Z''| > (\delta|F|)^2d$ and $\sigma_s^{-1}(n)\in q'_{f(s)}$ for all $n\in Z''$ and $s\in F$.
For any such $Q'$ one has $\bigcap_{s\in F}\sigma_s(q'_{f(s)})\supseteq Z''$, and hence
$\big|\bigcap_{s\in F}\sigma_s(q'_{f(s)})\big| > (\delta|F|)^2d$. Therefore $Q\not\in S$.

Define the function $\xi$ on $[0,1]$ by $\xi(t)=-t\log t$.
The number $|W|$ is bounded above by the number of partitions of $\{1, \dots, d\}$ indexed by $A$ which do not belong to $S$,
which is bounded above by
\begin{align*}
\binom{|Z'|}{|Z'|-\lfloor (\delta|F|)^2d\rfloor}(|A|^{|F|}-1)^{|Z'|-\lfloor (\delta|F|)^2d\rfloor}
|A|^{d-(|Z'|-\lfloor (\delta|F|)^2d\rfloor)|F|},
\end{align*}
which in turn by Stirling's approximation is bounded above by
\begin{align*}
C\exp(|Z'|\xi(1-\delta^2|F|^2d/|Z'|)+|Z'|\xi(\delta^2|F|^2d/|Z'|))|A|^d\bigg(\frac{|A|^{|F|}-1}{|A|^{|F|}}\bigg)^{|Z'|-\delta^2 |F|^2d}
\end{align*}
for some constant $C>0$ not depending on $d$ or $|Z'|$.
Since $|Z'|\ge (1-\tau)d/|F|^2>2\delta^2|F|^2d$ and the function $\xi$ is concave, we have
\[
\xi(1-\delta^2|F|^2d/|Z'|)+\xi(\delta^2|F|^2d/|Z'|)\le \xi(1-\delta^2|F|^4/(1-\tau))+\xi(\delta^2|F|^4/(1-\tau)).
\]
It follows that
\begin{align*}
\lefteqn{\limsup_{i\to \infty}\frac{1}{d_i}\log N_0(\Hom(\cP, F, \delta, \sigma_i), \rho_{\cP})}\hspace*{30mm} \\
\hspace*{30mm} &\le \xi(1-\delta^2|F|^4/(1-\tau))+\xi(\delta^2|F|^4/(1-\tau))\\
&\hspace*{10mm} \ +\log |A|+((1-\tau)/|F|^2-\delta^2 |F|^2)\log \bigg(\frac{|A|^{|F|}-1}{|A|^{|F|}}\bigg).
\end{align*}
Letting $\tau\rightarrow 0$, we get
\begin{align*}
\lefteqn{\limsup_{i\to \infty}\frac{1}{d_i}\log N_0(\Hom(\cP, F, \delta, \sigma_i), \rho_{\cP})}\hspace*{15mm} \\
\hspace*{15mm} &\le \xi(1-\delta^2|F|^4)+\xi(\delta^2|F|^4)+\log |A|
+(1/|F|^2-\delta^2 |F|^2)\log \bigg(\frac{|A|^{|F|}-1}{|A|^{|F|}}\bigg).
\end{align*}
Then
\begin{align*}
\inf_{\delta>0}\limsup_{i\to \infty}\frac{1}{d_i}\log N_0(\Hom(\cP, F, \delta, \sigma_i), \rho_{\cP})
&\le \log |A|+(1/|F|^2)\log \bigg(\frac{|A|^{|F|}-1}{|A|^{|F|}}\bigg),
\end{align*}
as desired.
\end{proof}

We point out that for certain $G$ it can happen that for some subshift action as in the
above theorem we have $h_\Sigma (X,G) = -\infty$ for every sofic approximation sequence $\Sigma$. For this
to occur it suffices that $X$ admit no $G$-invariant Borel probability measure, as a weak$^*$ limit argument
demonstrates (see also Theorem~\ref{T-variational}), and there are topological Markov chains over
the free group $F_2$ that do not admit an invariant Borel probability measure. Consider for example the left shift
action of $F_2$ on $\{ 0,1,2 \}^{F_2}$, and then take the closed $G$-invariant subset
$X$ consisting of elements whose allowable
transitions in the directions of the two generators are described by
$0\rightleftarrows 1\rightleftarrows 2$ and $0\rightarrow 1\rightarrow 2\rightarrow 0$.
If $X$ had an invariant Borel probability measure then by the first arrow diagram the measure of the set $A_1$ of all $x\in X$
for which $x_e = 1$ would be the sum of the measure of the set $A_0$ of all $x\in X$ for which $x_e = 0$ and the measure of
the set $A_2$ of all $x\in X$ for which $x_e = 2$, but each of the sets $A_0$, $A_1$, and $A_2$ must have measure $1/3$
by the second arrow diagram, producing a contradiction.

\section{Measure entropy via homomorphisms}\label{S-Hom}

Let $\alpha$ be a continuous action of a sofic countable group $G$ on a compact metrizable space $X$.
When considering $G$-invariant Borel probability measures on $X$, as will be the case in
Sections~\ref{S-variational} and \ref{S-algebraic}, we wish to
have a way of expressing measure entropy in terms of unital homomorphisms from $C(X)$ into $\Cb^d$
for the purpose of comparison with topological entropy. This is especially convenient when the invariant measure $\mu$
in question does not have full support, in which case $C(X)$ does not naturally embed into $L^\infty (X,\mu )$.
We therefore make the following definitions
in analogy with Definitions~\ref{D-UP} and \ref{D-entropy}, and then show in Proposition~\ref{P-Hom measure}
that we recover the measure entropy as originally defined in Section~\ref{S-measure}.

Let $\cS = \{ p_n \}_{n=1}^\infty$ be a sequence in the unit ball of $C_\Rb (X)$.
The notation $\cS_{F,m}$ and $\rho_\cS$ is as introduced in Section~\ref{S-measure}.

\begin{definition}\label{D-measure top}
Suppose that $\mu$ is a Borel probability measure on $X$. Let $\sigma$ be a map from $G$ to $\Sym (d)$
for some $d\in\Nb$. Let $F$ be a nonempty finite subset of $G$, $m\in\Nb$, and $\delta > 0$.
We write $\Hom_\mu^X (\cS ,F,m,\delta ,\sigma )$ for the set of unital homomorphisms
$\varphi : C(X)\to\Cb^d$ such that
\begin{enumerate}
\item[(i)]
%$\sum_{f\in\cS_{F,m}}
$| \zeta\circ\varphi (f) - \mu (f) | < \delta$ for all $f\in \cS_{F, m}$,

\item[(ii)]
%$\sum_{f\in\cS}
$\| \varphi\circ\alpha_s (f) - \sigma_s \circ\varphi (f) \|_2 < \delta$ for all $s\in F$ and $f\in \{p_1, \dots, p_m\}$.
\end{enumerate}
\end{definition}

\begin{definition}\label{D-measure top 2}
Suppose that $\mu$ is a Borel probability measure on $X$. Let $\varepsilon > 0$. Let $F$ be a nonempty finite
subset of $G$, $m\in\Nb$, and $\delta > 0$. We set
\begin{align*}
\ch_{\Sigma ,\mu}^\varepsilon (\cS ,F,m,\delta ) &=
\limsup_{i\to\infty} \frac{1}{d_i} \log N_\varepsilon (\Hom_\mu^X (\cS ,F,m,\delta ,\sigma_i ),\rho_{\cS} ) ,\\
\ch_{\Sigma ,\mu}^\varepsilon (\cS ,F,m) &= \inf_{\delta > 0} \ch_{\Sigma ,\mu}^\varepsilon (\cS ,F,m,\delta ) ,\\
\ch_{\Sigma ,\mu}^\varepsilon (\cS ,F) &= \inf_{m\in\Nb} \ch_{\Sigma ,\mu}^\varepsilon (\cS ,F,m) ,\\
\ch_{\Sigma ,\mu}^\varepsilon (\cS ) &= \inf_{F} \ch_{\Sigma ,\mu}^\varepsilon (\cS ,F) ,
\end{align*}
where the infimum in the last line is over all nonempty finite subsets of $G$.
\end{definition}

%Proposition~\ref{P-Hom top} then says that
%$h_{\Sigma ,\mu}^\varepsilon (\cP ) = \sup_{\varepsilon > 0} \ch_{\Sigma ,\mu}^\varepsilon (\cP )$.

The proof of the following lemma is similar to that of Lemma~\ref{L-infinite proj2}.
%Here however we don't require that $\cS$ consist of projections, but this is not a problem
%because of the generality of the perturbation result (Lemma~\ref{L-mult perturbation})
%on which Lemma~\ref{L-Hom ineq} relies.

\begin{lemma}\label{L-Hom ineq 2}
Suppose that $\mu$ is a $G$-invariant Borel probability measure on $X$.
Let $\cS = \{ p_n \}_{n=1}^\infty$ be a sequence in the unit ball of $C_{\Rb}(X)$.
Let $\varepsilon > 0$. Let $F$ be a finite
subset of $G$ containing $e$, $m$ a positive integer with $2^{-(m-1)}<\varepsilon/3$,
and $\delta > 0$. Then there is a $\delta' > 0$ such that
\[
N_\varepsilon (\UP_\mu (\cS ,F,m^2,\delta' ,\sigma ),\rho_\cS )
\leq N_{\varepsilon /3} (\Hom_\mu^X (\cS ,F,m,\delta ,\sigma ),\rho_\cS )
\]
for every $\sigma$ that maps $G$ to $\Sym (d)$ for some $d\in\Nb$.
\end{lemma}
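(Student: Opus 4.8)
The plan is to follow the pattern of the proof of Lemma~\ref{L-infinite proj2}. To each $\varphi\in\UP_\mu(\cS,F,m^2,\delta',\sigma)$ I will associate a genuine unital homomorphism $\Gamma(\varphi)\colon C(X)\to\Cb^d$ lying in $\Hom_\mu^X(\cS,F,m,\delta,\sigma)$ and satisfying $\rho_\cS(\varphi,\Gamma(\varphi))<\varepsilon/3$; pushing an $\varepsilon$-separated set through $\Gamma$ then gives the inequality. Let $\iota\colon C(X)\to L^\infty(X,\mu)$ be the canonical unital $^*$-homomorphism; since $\mu$ is $G$-invariant, $\iota$ is equivariant ($\iota\circ\alpha_s=\alpha_s\circ\iota$), and as usual we regard $\cS$ as a sequence in the unit ball of $L^\infty(X,\mu)$ via $\iota$. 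We may assume $m\ge 2$. Set $\Omega:=\cS_{F,m}$, a finite subset of $C(X)$ which, because $e\in F$, contains $p_i$ and $\alpha_s(p_i)$ for all $i\le m$ and $s\in F$.

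First choose $\varepsilon_1>0$ with $\varepsilon_1+2^{-(m-1)}<\varepsilon/3$ and $3\varepsilon_1<\delta$; the former is possible precisely because of the strict inequality $2^{-(m-1)}<\varepsilon/3$ in the hypothesis. Apply Lemma~\ref{L-mult perturbation} with $A=C(X)$, the finite set $\Omega$, and tolerance $\varepsilon_1$ to get a $\delta''>0$ independent of $d$ and $\sigma$, and then choose $\delta'>0$ with $3\delta'<\delta''$ and $\delta'<\delta/3$. Now let $\sigma\colon G\to\Sym(d)$ be arbitrary and $\varphi\in\UP_\mu(\cS,F,m^2,\delta',\sigma)$; put $\psi:=\varphi\circ\iota\colon C(X)\to\Cb^d$, a unital positive map. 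For $g=\alpha_{s_1}(f_1)\cdots\alpha_{s_j}(f_j)\in\Omega$ (so $1\le j\le m$) the element $g^*g=\alpha_{s_j}(f_j)\cdots\alpha_{s_1}(f_1)\alpha_{s_1}(f_1)\cdots\alpha_{s_j}(f_j)$ is a product of $2j\le m^2$ of the functions $\alpha_s(p_i)$ with $s\in F$ and $i\le m$; applying condition~(i) of $\UP_\mu(\cS,F,m^2,\delta',\sigma)$ to this product and to its halves $g$, $g^*$, regrouping, and using $\|\psi(\alpha_s(p_i))\|_\infty\le 1$, one gets $\|\psi(g^*g)-\psi(g^*)\psi(g)\|_2<3\delta'<\delta''$, and $\psi(g^*)=\psi(g)^*$ since $\psi$ is $^*$-preserving. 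Thus Lemma~\ref{L-mult perturbation} applies to $\psi$, producing a unital homomorphism $\Gamma(\varphi)\colon C(X)\to\Cb^d$ with $\|\Gamma(\varphi)(f)-\psi(f)\|_2<\varepsilon_1$ for all $f\in\Omega$.

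Using conditions~(ii), (iii) of $\UP_\mu$ (noting $\cS_{F,m}\subseteq\cS_{F,m^2}$ and that each $\sigma_s$ acts isometrically on $(\Cb^d,\|\cdot\|_2)$), a triangle-inequality estimate gives $|\zeta\circ\Gamma(\varphi)(f)-\mu(f)|<\varepsilon_1+\delta'<\delta$ for $f\in\cS_{F,m}$ and $\|\Gamma(\varphi)\circ\alpha_s(f)-\sigma_s\circ\Gamma(\varphi)(f)\|_2<2\varepsilon_1+\delta'<\delta$ for $s\in F$, $f\in\{p_1,\dots,p_m\}$; hence $\Gamma(\varphi)\in\Hom_\mu^X(\cS,F,m,\delta,\sigma)$. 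Splitting the series defining $\rho_\cS(\varphi,\Gamma(\varphi))$ at $m$ and bounding the tail by $2\sum_{n>m}2^{-n}=2^{-(m-1)}$ (using $\|\psi(p_n)\|_2,\|\Gamma(\varphi)(p_n)\|_2\le 1$) gives $\rho_\cS(\varphi,\Gamma(\varphi))<\varepsilon_1+2^{-(m-1)}<\varepsilon/3$. Consequently, if $L\subseteq\UP_\mu(\cS,F,m^2,\delta',\sigma)$ is $\varepsilon$-separated for $\rho_\cS$, then for distinct $\varphi,\chi\in L$ we have $\rho_\cS(\Gamma(\varphi),\Gamma(\chi))\ge\rho_\cS(\varphi,\chi)-\rho_\cS(\varphi,\Gamma(\varphi))-\rho_\cS(\chi,\Gamma(\chi))>\varepsilon-2\varepsilon/3=\varepsilon/3$, so $\Gamma$ is injective on $L$ and $\Gamma(L)$ is an $\varepsilon/3$-separated subset of $\Hom_\mu^X(\cS,F,m,\delta,\sigma)$; taking $L$ maximal yields $N_\varepsilon(\UP_\mu(\cS,F,m^2,\delta',\sigma),\rho_\cS)\le N_{\varepsilon/3}(\Hom_\mu^X(\cS,F,m,\delta,\sigma),\rho_\cS)$.

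The step I expect to require the most care is the regrouping/telescoping estimate turning the ``sequential'' approximate multiplicativity in condition~(i) of $\UP_\mu$ into the bound $\|\psi(g^*g)-\psi(g)^*\psi(g)\|_2<\delta''$ needed to invoke Lemma~\ref{L-mult perturbation}, together with checking that all products involved stay within length $m^2$ — which is exactly the role of the parameter $m^2$ in the left-hand side. Everything else is routine bookkeeping of the kind already carried out in the proofs of Theorem~\ref{T-gen comparison} and Lemma~\ref{L-infinite proj2}.
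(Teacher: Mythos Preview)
Your proof is correct and follows essentially the same approach as the paper's: perturb $\varphi$ to a genuine homomorphism via Lemma~\ref{L-mult perturbation}, verify membership in $\Hom_\mu^X(\cS,F,m,\delta,\sigma)$, and use the $\varepsilon/3$--closeness to push $\varepsilon$-separated sets forward. The one organisational difference is that you apply Lemma~\ref{L-mult perturbation} directly on $C(X)$ to the composite $\psi=\varphi\circ\iota$, while the paper first restricts $\varphi$ to $C(B)\subseteq L^\infty(X,\mu)$ (with $B=\supp\mu$), applies the perturbation lemma there, and then composes with the restriction map $\lambda\colon C(X)\to C(B)$; your route is marginally more direct, whereas the paper's produces a homomorphism whose point evaluations lie in $B$, a feature that is not actually needed for this lemma. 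Your explicit regrouping estimate $\|\psi(g^*g)-\psi(g)^*\psi(g)\|_2<3\delta'$ and the observation that $2j\le 2m\le m^2$ (hence the standing assumption $m\ge 2$) make precise what the paper leaves implicit.
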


\begin{proof}
%Apply the same argument as for the proof of Lemma~\ref{L-???}, only now we must further
%compose with the map $f\mapsto 1_B f$ from $C(X)$ to $L^\infty (X,\mu)$ where $B$ is the
%closed subset of $X$ supporting $\mu$.
%Let $\sigma$ be a map from $G$ to $\Sym (d)$ for some $d\in\Nb$.
Write $B$ for the closed subset of $X$ supporting $\mu$.
Then we can view $C(B)$ as a unital $C^*$-subalgebra of $L^\infty (X,\mu )$, i.e.,
a $^*$-subalgebra which is closed in the $L^\infty$ norm.
Given an $\eta > 0$, by Lemma~\ref{L-mult perturbation} there is a $\delta' > 0$ such that
for every map $\sigma$ from $G$ to $\Sym (d)$ for some $d\in\Nb$
and
every $\varphi\in\UP_\mu (\cS ,F,m^2,\delta' ,\sigma )$
there is a unital homomorphism $\check{\varphi} : C(B) \to\Cb^d$ for which
$\max_{f\in\cS_{F,m}} \| \check{\varphi} (f|_B ) - \varphi (f) \|_2 < \min (\eta , \varepsilon /(6m))$.
By taking $\eta$ and $\delta'$ small enough
this will imply that
$\check{\varphi} \circ\lambda\in \Hom_\mu^X (\cS ,F,m,\delta ,\sigma )$ where $\lambda$ is the
restriction map $f\mapsto f|_B$ from $C(X)$ to $C(B)$.
Define a map $\Gamma : \UP_\mu (\cS ,F,m,\delta' ,\sigma )\to\Hom_\mu^X (\cS ,F,m,\delta ,\sigma )$
by $\Gamma (\varphi ) = \check{\varphi} \circ\lambda$.

For any $\varphi , \psi\in \UP_\mu (\cS ,F,m^2, \delta',\sigma )$, we have
\begin{align*}
\rho_{\cS}(\varphi, \psi) &=
\sum_{n=1}^{\infty} \frac{1}{2^n}\| \varphi (p_n) - \psi (p_n) \|_2 \\
&\leq \sum_{n=1}^m\frac{1}{2^n}\|\varphi(p_n)-\psi(p_n)\|_2+\frac{1}{2^{m-1}}\\
&\leq \sum_{n=1}^m\frac{1}{2^n} \big( \| \varphi (p_n) - \Gamma (\varphi )(p_n) \|_2
+ \| \Gamma (\varphi )(p_n) - \Gamma (\psi )(p_n) \|_2 \\
&\hspace*{30mm} \ + \| \Gamma (\psi )(p_n) - \psi (p_n) \|_2 \big) +\frac{1}{2^{m-1}}\\
&\leq \frac23 \varepsilon + \rho_{\cS}(\Gamma(\varphi), \Gamma(\psi)).
%\sum_{f\in\cS} \| \Gamma (\varphi )(f) - \Gamma (\psi )(f) \|_2 .
\end{align*}
Thus for every subset $L$ of $\UP_\mu (\cS ,F,m^2, \delta',\sigma )$ which is $\varepsilon$-separated
with respect to $\rho_{\cS}$, the set $\Gamma(L)$ is $(\varepsilon/3)$-separated with respect to $\rho_{\cS}$.
Consequently
\begin{align*}
N_\varepsilon (\UP_\mu (\cS ,F,m^2,\delta',\sigma ),\rho_\cS )
\leq N_{\varepsilon /3} (\Hom^X_\mu (\cS ,F,m,\delta,\sigma ),\rho_\cS ) ,
\end{align*}
yielding the lemma.
\end{proof}

\begin{proposition}\label{P-Hom measure}
Suppose that $\mu$ is a $G$-invariant Borel probability measure on $X$.
Let $\cS = \{ p_n \}_{n=1}^\infty$ be a dynamically generating sequence in the unit ball of $C_{\Rb}(X)$.
Then
\[
h_{\Sigma ,\mu} (\cS ) = \sup_{\varepsilon > 0} \ch_{\Sigma ,\mu}^\varepsilon (\cS ) .
\]
\end{proposition}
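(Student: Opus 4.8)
The plan is to prove the two inequalities separately, keeping to the convention (implicit already in Lemma~\ref{L-Hom ineq 2}) that on the left-hand side $\cS$ is regarded as a sequence in the unit ball of $L^\infty (X,\mu )=L^\infty (B,\mu )$ via the images of the $p_n$ under the restriction $C(X)\to C(B)\subseteq L^\infty (X,\mu )$, where $B=\supp\mu$. The inequality $h_{\Sigma ,\mu}(\cS )\le\sup_{\varepsilon>0}\ch_{\Sigma ,\mu}^\varepsilon(\cS )$ is immediate from Lemma~\ref{L-Hom ineq 2}: for every finite $F\ni e$, every $m\in\Nb$ with $2^{-(m-1)}<\varepsilon/3$, and every $\delta>0$ that lemma supplies a $\delta'>0$ with $N_\varepsilon(\UP_\mu(\cS ,F,m^2,\delta' ,\sigma ),\rho_\cS )\le N_{\varepsilon/3}(\Hom_\mu^X(\cS ,F,m,\delta ,\sigma ),\rho_\cS )$ for every $\sigma$. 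Passing to $\limsup_i$ and then taking, in turn, $\inf$ over $\delta$ (the infimum over all admissible $\delta''$ dominates that over the values $\delta'(\delta)$), $\inf$ over the admissible $m$ (these are cofinal, $m\mapsto m^2$ is cofinal among them, and both sides are nonincreasing in $m$), $\inf$ over finite $F\ni e$, and finally $\sup$ over $\varepsilon$, one obtains $h_{\Sigma ,\mu}^\varepsilon(\cS )\le\ch_{\Sigma ,\mu}^{\varepsilon/3}(\cS )$ for every $\varepsilon>0$, hence the inequality.

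For the reverse inequality $\sup_{\varepsilon>0}\ch_{\Sigma ,\mu}^\varepsilon(\cS )\le h_{\Sigma ,\mu}(\cS )$, which is the substantial part, the obstacle is that a unital homomorphism $\varphi :C(X)\to\Cb^d$, which by Gelfand theory is composition with a map $\tau :\{ 1,\dots ,d\}\to X$, need not factor through the restriction $C(X)\to C(B)$, so it does not directly yield a unital positive map on $L^\infty (X,\mu )$; one must first retract $\tau$ onto $B$. Assume (by the evident analogues of Remark~\ref{R-pou}, which change neither side) that $1\in\cS$. Since $\cS$ is dynamically generating, the $G$-invariant unital $^*$-algebra it generates is dense in $C(X)$, so for any $\rho ,\eta>0$ there are a finite $F_0\ni e$, an $m_0\in\Nb$, and a $P\in\spn (\cS_{F_0 ,m_0})$, say $P=\sum_k c_k f_k$ with $f_k\in\cS_{F_0,m_0}$, such that $\| P-g \|_\infty <\eta$, where $g(x)=\min (1,{\rm dist}(x,B)/\rho )$. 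Since $g$ vanishes on $B$ we have $|\mu (P)|\le\eta$, while $P\ge -\eta$ and $g\ge\chi_{\{{\rm dist}(\cdot ,B)\ge\rho\}}$; hence condition~(i) of Definition~\ref{D-measure top} (applied with $F\supseteq F_0$ and $m\ge m_0$) forces $d^{-1}|\{ j:{\rm dist}(\tau (j),B)\ge\rho \} |\le 2\eta + (\sum_k|c_k|)\delta$ for every $\varphi\in\Hom_\mu^X(\cS ,F,m,\delta ,\sigma )$.

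Given this, fix a point $b_0\in B$ and a countable dense set $\{ y_k\}\subseteq B$, and define $\tau' :\{ 1,\dots ,d\}\to B$ by $\tau' (j)=y_k$ with $k$ least satisfying $d(\tau (j),y_k)<2\rho$ when ${\rm dist}(\tau (j),B)<\rho$, and $\tau' (j)=b_0$ otherwise, so $d(\tau (j),\tau' (j))<2\rho$ except for a set of $j$'s of density at most $2\eta +(\sum_k|c_k|)\delta$. Let $\varphi' :C(B)\to\Cb^d$ be composition with $\tau'$, viewed through $C(B)\subseteq L^\infty (X,\mu )$, and let $\Gamma (\varphi ):L^\infty (X,\mu )\to\Cb^d$ be a unital (hence positive) Hahn--Banach extension of $\varphi'$. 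Then $\Gamma (\varphi )|_{C(B)}=\varphi'$ is a genuine homomorphism, so condition~(i) of Definition~\ref{D-UP} holds for $\Gamma (\varphi )$ with error $0$; and since $\cS_{F,m}$ is finite, the uniform continuity of $p_1 ,\dots ,p_m$ and of the finitely many homeomorphisms $x\mapsto sx$ ($s\in F$), together with the density bound on the exceptional $j$'s and the fact that each $\sigma_s$ is a permutation and so preserves $\|\cdot\|_2$, shows that for $\rho ,\eta ,\delta$ small enough $\Gamma (\varphi )$ satisfies conditions~(ii) and (iii) of Definition~\ref{D-UP} with any prescribed tolerance $\delta_1>0$, i.e. $\Gamma (\varphi )\in\UP_\mu(\cS ,F,m,\delta_1 ,\sigma )$. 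The same estimates, applied to the finitely many $p_n$ with $n\le R$ (and with the tail $\sum_{n>R}2^{-n}\|\cdots\|_2$ bounded crudely by $2^{-R+1}$), give $\rho_\cS (\varphi ,\Gamma (\varphi ))<\varepsilon/4$ once $R$ is large and the parameters small. Consequently $\Gamma$ carries an $\varepsilon$-separated subset of $\Hom_\mu^X(\cS ,F,m,\delta ,\sigma )$ injectively onto an $\varepsilon/2$-separated subset of $\UP_\mu(\cS ,F,m,\delta_1 ,\sigma )$, whence $\ch_{\Sigma ,\mu}^\varepsilon(\cS ,F,m,\delta )\le h_{\Sigma ,\mu}^{\varepsilon/2}(\cS ,F,m,\delta_1 )$; taking $\inf$ over $\delta_1$, then over the cofinal admissible pairs $(F,m)$ (with $F\ni e$, $F\supseteq F_0$, $m\ge m_0$), and finally $\sup$ over $\varepsilon$ yields $\sup_{\varepsilon>0}\ch_{\Sigma ,\mu}^\varepsilon(\cS )\le h_{\Sigma ,\mu}(\cS )$.

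The main obstacle is exactly this retraction onto $\supp\mu$ in the reverse inequality: one needs both that the measure condition genuinely concentrates $\tau$ near $\supp\mu$ — the point at which dynamical generation is used, to produce the test element $P$ approximating the distance function — and that replacing $\tau$ by $\tau'$ disturbs all the relevant $\UP_\mu$-data and the pseudometric $\rho_\cS$ by an arbitrarily small amount, which rests on the compactness of $X$ through uniform continuity. Everything else is bookkeeping with the nested infima, together with the elementary facts that a Hahn--Banach extension of a unital homomorphism on $C(B)$ to $L^\infty (X,\mu )$ is unital positive and still multiplicative on $C(B)$.
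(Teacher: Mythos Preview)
Your approach is essentially the same as the paper's: retract each point of the underlying map $\tau:\{1,\dots,d\}\to X$ onto $B=\supp\mu$, using a test element in $\spn(\cS_{F_0,m_0})$ approximating a continuous cutoff of the indicator of (a neighbourhood of) $B$ to certify via condition~(i) of Definition~\ref{D-measure top} that most $\tau(j)$ lie near $B$; then extend the resulting homomorphism $C(B)\to\Cb^d$ to $L^\infty(X,\mu)$ by Hahn--Banach. The paper uses a function close to $\chi_B$ whereas you use one close to $1-\chi_B$, and the paper projects each $\tau(j)$ to a nearest point of $B$ whereas you pick from a countable dense set, but these are cosmetic differences.

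There is, however, a genuine circularity in your parameter bookkeeping for the reverse inequality. You verify $\Gamma(\varphi)\in\UP_\mu(\cS,F,m,\delta_1,\sigma)$ using the uniform continuity of $p_1,\dots,p_m$ and of the maps $x\mapsto s^{-1}x$ for $s\in F$; the required smallness of $\rho$ therefore depends on the pair $(F,m)$. But your ``cofinal admissible pairs'' are those with $F\supseteq F_0$ and $m\geq m_0$, and $F_0,m_0$ (along with the constant $\sum_k|c_k|$ in your density bound) are determined by the choice of $\rho,\eta$. So you cannot simultaneously fix $\rho$ in advance to define the cofinal family and later demand that $\rho$ be small depending on $(F,m)$ in that family. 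The paper avoids this by decoupling the two sides: given a target triple $(F,m,\delta)$ for the $\UP_\mu$ side, it chooses $\tau$ (your $\rho,\eta$) small depending on $(F,m,\delta)$, finds the approximating function in $\spn(\cS_{F',m'})$ for some \emph{larger} $F'\supseteq F$ and $m'\geq m$, and then shows $\Hom_\mu^X(\cS,F',m',\delta',\sigma)$ maps into $\UP_\mu(\cS,F,m,\delta,\sigma)$. Replacing your ``$\UP_\mu(\cS,F,m,\delta_1,\sigma)$'' with the analogous target at the original $(F,m)$ while working with enlarged $(F',m')$ on the $\Hom_\mu^X$ side fixes your argument with no change to the underlying idea.
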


\begin{proof}
By Remark~\ref{R-pou} we may assume that $p_1=1$.
That the left side of the displayed equality is bounded above by the right side follows easily from
Lemma~\ref{L-Hom ineq 2}.

For the reverse inequality, it suffices to show that
$h_{\Sigma, \mu}^{\varepsilon/2}(\cS)\ge \ch_{\Sigma ,\mu}^\varepsilon (\cS ) $ for every $\varepsilon>0$.
Fix a compatible metric $\rho$ on $X$.
Denote by $B$ the closed subset of $X$ supporting $\mu$.
Regard $C(B)$ as a unital  $C^*$-subalgebra of $L^\infty (X,\mu )$ as in the proof of Proposition~\ref{L-Hom ineq 2}.
For each unital homomorphism $\varphi_1:C(B)\rightarrow \Cb^d$, fix an extension of $\varphi_1$
to a unital positive linear map $L^{\infty}(X, \mu)\to \Cb^d$, which we denote by $\theta(\varphi_1)$.
Such extensions exist by the Hahn-Banach theorem, as discussed in the introduction.
%Such extensions exists by Gelfand theory, which implies that
%$L^\infty (X,\mu )\cong C(\Omega )$ for some compact Hausdorff space $\Omega$,
%the embedding $C(B)\hookrightarrow L^\infty (X,\mu )\cong C(\Omega )$ is given by composition with a continuous
%surjection $\pi : \Omega\to B$, and for every unital homomorphism $\varphi : C(B)\to \Cb$ there is an $x\in B$
%such that $\varphi$ is given by point evaluation at $x$, in which case the desired extension
%$L^{\infty}(X, \mu)\cong C(\Omega )\to \Cb$ can be taken to be evaluation at some point in $\pi^{-1} (\{ x \} )$.
%We use these extensions coordinatewise to extend
%every unital homomorphism $\varphi_1:C(B)\rightarrow \Cb^d$ for some $d\in\Nb$
%to a unital homomorphism $ L^{\infty}(X, \mu)\rightarrow \Cb^d$, which we denote by $\theta(\varphi_1)$.

Let $F$ be a finite subset of $G$ containing $e$, $m$ a positive integer with $2^{-(m-1)}<\varepsilon/8$, and $\delta>0$.

For $\tau>0$ denote by $W_\tau$ the set of all $g\in C(X)$
satisfying $g>0$ on $X$, $g<\tau$ on $X\setminus B_{\tau}$, and $g< 1+\tau$ on $B_{\tau}$,
and $g>1-\tau$ on $B$,
where $B_\tau$ is the open $\tau$-neighbourhood
$\{ x\in X : \inf_{y\in B} \rho(x,y) < \tau \}$ of $B$. Note that the regularity of $\mu$ implies
that, given an $\eta>0$, if $\tau$ is small enough then for every $g\in W_{\tau}$
and every Borel probability measure $\nu$ on $X$ satisfying
$|\nu(g)-\mu(g)|<\tau$ one has $\nu(B_{\tau})>1-\eta$.

Let $\tau$ be a positive number to be determined in a moment.
Since $W_{\tau}$ is a nonempty open subset of $C(X)$, $\cS$ dynamically generates $C(X)$, and $p_1=1$,
we can find a finite set $F'\subseteq G$ containing $F$ and an $m'\in \Nb$ no less than $m$ such that
there exists a function $g$ in the intersection $\spn (\cS_{F',m'} ) \cap W_{\tau}$.
Let $\delta'$ be a positive number to be determined in a moment.
Let $\sigma$ be a map from $G$ to $\Sym (d)$ for some $d\in\Nb$.
Given a $\varphi\in\Hom_\mu^X (\cS , F' ,m' ,\delta' , \sigma)$
we construct a unital homomorphism
$\check{\varphi} : C(B)\to\Cb^{d}$ as follows. For each $a\in \{ 1,\dots ,d \}$ the homomorphism
$f\mapsto \varphi(f)(a)$ on $C(X)$ is given by evaluation at some point $x_a \in X$, and we require
that the homomorphism $f\mapsto\check{\varphi} (f)(a)$ on $C(B)$ is given by some point $y\in B$ which
minimizes the distance from $x_a$ to points of $B$ with respect to $\rho$.
%such that the distance from
%$y$ to $x_a$ is minimized with respect to the metric $d$.
Write $\lambda$ for the restriction map
$f\mapsto f|_B$ from $C(X)$ to $C(B)$.
In view of the uniform continuity of the functions in $\cS_{F, m}$
and the fact that
$| \zeta \circ\varphi (g) - \mu (g) |<\tau$ when $\delta'$ is small enough, one can readily verify
that if $\delta'$ and $\tau$ are assumed to be small enough independently of $d$, $\sigma$ and $\varphi$
then we can ensure that $\rho_\cS (\check{\varphi} \circ\lambda , \varphi ) < \varepsilon /4$ and
$\theta(\check{\varphi}) \in \UP_\mu (\cS , F ,m ,\delta , \sigma )$.

Write $\Gamma$ for the map $\varphi\mapsto \theta(\check{\varphi})$ from
$\Hom_\mu^X (\cS , F' ,m' ,\delta' , \sigma)$ to $\UP_\mu (\cS , F ,m ,\delta , \sigma)$.
For any $\varphi, \psi\in \Hom_\mu^X (\cS , F' ,m' ,\delta' , \sigma)$ one has
\begin{align*}
\rho_\cS (\varphi, \psi)&\le \rho_\cS(\varphi, \check{\varphi} \circ\lambda )+\rho_\cS (\check{\varphi} \circ\lambda , \check{\psi} \circ\lambda )
+\rho_\cS(\check{\psi} \circ\lambda, \psi)\\
&< \varepsilon/2+ \rho_\cS (\check{\varphi} \circ\lambda , \check{\psi} \circ\lambda )=\varepsilon/2+\rho_\cS(\Gamma(\varphi), \Gamma(\psi)).
\end{align*}
Thus for any subset $L$ of $\Hom_\mu^X (\cS , F' ,m' ,\delta' , \sigma)$ which is $\varepsilon$-separated
with respect to $\rho_{\cS}$, the set
$\Gamma(L)$ is $(\varepsilon/2)$-separated with respect to $\rho_{\cS}$.
It follows that
\[ N_{\varepsilon/2}(\UP_\mu (\cS , F ,m ,\delta , \sigma ), \rho_\cS)\geq N_{\varepsilon}(\Hom_\mu^X (\cS , F' ,m' ,\delta' , \sigma), \rho_\cS),\]
and hence
\[
h_{\Sigma ,\mu}^{\varepsilon /2} (\cS ,F ,m ,\delta )  \geq
\ch_{\Sigma ,\mu}^\varepsilon (\cS ,F',m',\delta' )  .
\]
Since $F$ was an arbitrary finite subset of $G$ containing $e$,
$m$ an arbitrary large positive integer, and $\delta$ an arbitrary  positive number,
we conclude that $h_{\Sigma, \mu}^{\varepsilon/2}(\cS)\ge \ch_{\Sigma ,\mu}^\varepsilon (\cS )$.
\end{proof}

In the case of a finite subset $\cP$ of the unit ball of $C_\Rb (X)$, we can avoid the sequential formalism
(cf.\ Definitions~\ref{D-finite} and \ref{D-finite top})
by considering on the set of unital positive linear maps
from some unital self-adjoint linear subspace of $C(X)$ containing $\spn (\cP )$ to $\Cb^d$
the pseudometric
\begin{align*}
\rho_{\cP} (\varphi , \psi ) &= \max_{p\in\cP} \| \varphi (p) - \psi (p) \|_2 .
\end{align*}
and making the following definitions.

\begin{definition}\label{D-Hom finite}
Let $\sigma$ be a map from $G$ to $\Sym (d)$ for some $d\in\Nb$.
Let $\cP$ be a finite partition of unity in $C(X)$. Let $F$ be a nonempty finite subset of $G$, $m\in\Nb$,
and $\delta > 0$.
Define $\Hom_\mu^X (\cP , F,m,\delta ,\sigma )$ to be the set of all unital homomorphisms
$\varphi : C(X) \to \Cb^d$ such that
\begin{enumerate}
\item[(i)]
$| \zeta\circ\varphi (f) - \mu (f) | < \delta$ for all $f\in\cP_{F,m}$,

\item[(ii)]
$\| \varphi\circ\alpha_s (f) - \sigma_s \circ\varphi (f) \|_2 < \delta$ for all $f\in\cP$ and $s\in F$,
\end{enumerate}
where $\cP_{F,m}$ as before denotes the set of all all products of the form
$\alpha_{s_1} (p_1 )\cdots \alpha_{s_j} (p_j )$ where $1\leq j\leq m$,
$p_1 , \dots p_j \in\cP$, and $s_1 , \dots ,s_j \in F$.
Then define
$\ch_{\Sigma ,\mu}^\varepsilon (\cS ,F,m,\delta )$, $\ch_{\Sigma ,\mu}^\varepsilon (\cS ,F,m)$,
$\ch_{\Sigma ,\mu}^\varepsilon (\cS ,F)$, and $\ch_{\Sigma ,\mu}^\varepsilon (\cP )$ by formally
substituting $\cS$ for $\cP$ in Definition~\ref{D-measure top 2}.
\end{definition}

One can easily check that for any sequence $\cS$ whose image is equal to $\cP$ we have
\[
\sup_{\varepsilon > 0} \ch_{\Sigma ,\mu}^\varepsilon (\cP )
= \sup_{\varepsilon > 0} \ch_{\Sigma ,\mu}^\varepsilon (\cS ) , \]
%\[
%\sup_{\varepsilon > 0} \ch_{\Sigma ,\mu}^\varepsilon (\cS ) =
%\sup_{\varepsilon > 0}\inf_{F} \inf_{m\in\Nb}\inf_{\delta > 0}
%\limsup_{i\to\infty} \frac{1}{d_i} \log N_\varepsilon (\Hom_\mu^X (\cP ,F,m,\delta ,\sigma_i ),\rho_{\cS} )
%\]
%where $F$ ranges over all nonempty finite subsets of $G$,
and it follows from Proposition~\ref{P-Hom measure} that this common value is equal to
$h_{\Sigma ,\mu} (\cP )$ as in Definition~\ref{D-finite}.
%This formulation of entropy for finite subsets
%of the unit ball of $C_\Rb (X)$ is not needed in this section but it will be used in Section~\ref{S-algebraic}.
We will use these facts in Section~\ref{S-algebraic}.

\section{The variational principle}\label{S-variational}

%Here we establish a local variational principle relating the topological and measure entropies.
Throughout this section $\alpha$ is a continuous action of a sofic countable group $G$ on a compact
metrizable space $X$.
We write $M(X)$ for the convex set of  Borel probability measures
on $X$ equipped with the weak$^*$ topology, under which it is compact. Write $M_G (X)$ for the set of
$G$-invariant  Borel probability measures on $X$, which is a closed convex subset of $M(X)$.
In the proof below we will use the formulation of measure entropy for measures in $M_G (X)$ as given in
Section~\ref{S-Hom}. See Sections~\ref{S-measure} and \ref{S-top} for other notation.

\begin{theorem}\label{T-variational}
Let $\alpha$ be a continuous action of a sofic countable group $G$ on a compact metrizable space $X$. Then
\[
h_\Sigma (X,G) = \sup_{\mu\in M_G (X)} h_{\Sigma ,\mu} (X,G) .
\]
In particular, if $h_\Sigma (X,G) \neq -\infty$ then $M_G (X)$ is nonempty.
%Moreover, if there exists a generating finite partition of unity in $C(X)$ consisting of projections
%then the above supremum is achieved.
\end{theorem}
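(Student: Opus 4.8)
The plan is to prove the variational principle, Theorem~\ref{T-variational}, by a double inequality. Fix a dynamically generating sequence $\cS=\{p_n\}_{n=1}^\infty$ in the unit ball of $C_\Rb(X)$, which exists by metrizability, and recall that $h_\Sigma(X,G)=h_\Sigma(\cS)$ while for $\mu\in M_G(X)$ one has $h_{\Sigma,\mu}(X,G)=h_{\Sigma,\mu}(\cS)=\sup_{\varepsilon>0}\ch_{\Sigma,\mu}^\varepsilon(\cS)$ by Proposition~\ref{P-Hom measure}. The whole argument is then carried out at the level of homomorphisms $C(X)\to\Cb^d$, comparing $\Hom(\cS,F,\delta,\sigma)$ with $\bigcup_{\mu\in M_G(X)}\Hom_\mu^X(\cS,F,m,\delta,\sigma)$.

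First I would prove the easy inequality $h_\Sigma(X,G)\ge\sup_\mu h_{\Sigma,\mu}(X,G)$. For any $\mu\in M_G(X)$, a map in $\Hom_\mu^X(\cS,F,m,\delta,\sigma)$ is in particular a unital homomorphism $C(X)\to\Cb^d$ satisfying the approximate-equivariance condition $\|\varphi\circ\alpha_s(p_n)-\sigma_s\circ\varphi(p_n)\|_2<\delta$ for $s\in F$, $n\le m$; summing the $\ell^2$ bounds against the weights $2^{-n}$ and discarding the measure condition~(i), one sees $\Hom_\mu^X(\cS,F,m,\delta,\sigma)\subseteq\Hom(\cS,F,\delta',\sigma)$ for a suitable $\delta'$ depending on $\delta$ and $m$ (with $\delta'\to 0$ as $\delta\to 0$, $m\to\infty$). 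Hence $N_\varepsilon(\Hom_\mu^X(\cS,F,m,\delta,\sigma_i),\rho_\cS)\le N_\varepsilon(\Hom(\cS,F,\delta',\sigma_i),\rho_\cS)$, and taking $\limsup_i\frac1{d_i}\log(\cdot)$ and the appropriate infima gives $\ch_{\Sigma,\mu}^\varepsilon(\cS)\le h_\Sigma^\varepsilon(\cS)$, so $h_{\Sigma,\mu}(\cS)\le h_\Sigma(\cS)$; taking the supremum over $\mu$ finishes this direction. The last sentence of the theorem also follows here once the reverse inequality is in hand: if $M_G(X)=\emptyset$ the supremum is $-\infty$, so $h_\Sigma(X,G)=-\infty$; contrapositively, $h_\Sigma(X,G)\neq-\infty$ forces $M_G(X)\neq\emptyset$.

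The substantial direction is $h_\Sigma(X,G)\le\sup_\mu h_{\Sigma,\mu}(X,G)$. Fix $\varepsilon>0$, a finite $F\ni e$, and $\delta>0$. Each $\varphi\in\Hom(\cS,F,\delta,\sigma_i)$ is evaluation at a point-map $\mathbf{x}=(x_a)_{a=1}^{d_i}$ in $X^{d_i}$; to it I associate the empirical measure $\mu_\varphi=\frac1{d_i}\sum_{a=1}^{d_i}\delta_{x_a}\in M(X)$, equivalently the state $\zeta\circ\varphi$ on $C(X)$. The key point is that approximate $\sigma_i$-equivariance, combined with the asymptotic multiplicativity and freeness of $\Sigma$ and the condition $\lim_i d_i=\infty$, forces $\mu_\varphi$ to be \emph{almost} $G$-invariant: for each $s\in F$ one estimates $|\mu_\varphi(\alpha_s f)-\mu_\varphi(f)|$ for $f$ in a fixed finite set by writing $\mu_\varphi(\alpha_s f)=\zeta(\varphi\circ\alpha_s f)$, replacing $\varphi\circ\alpha_s$ by $\sigma_s\circ\varphi$ up to $\delta$ in $\|\cdot\|_2$ (hence up to $\delta$ after applying $\zeta$), and using $\zeta\circ\sigma_s=\zeta$ since $\sigma_{i,s}$ is a permutation. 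So there is a weak$^*$-neighbourhood $\cU$ of $M_G(X)$ (shrinking as $F$ grows and $\delta\to 0$) with $\mu_\varphi\in\cU$ for all $\varphi\in\Hom(\cS,F,\delta,\sigma_i)$ and all large $i$. By weak$^*$ compactness of $M(X)$, cover $\overline{\cU}$ by finitely many small weak$^*$-balls $\cU_1,\dots,\cU_r$ each of radius tuned so that any $\nu\in\cU_j$ is within $\delta$ of its centre on $\cS_{F,m}$; pick a centre $\mu_j$ in each, and replace $\mu_j$ by a genuine element of $M_G(X)$ nearby if $\cU_j\cap M_G(X)\neq\emptyset$ (the $\cU_j$ not meeting $M_G(X)$ can be discarded once $F,\delta$ are fine enough, since $\bigcup\Hom$ would eventually be empty over them — this uses $d_i\to\infty$). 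Then $\Hom(\cS,F,\delta,\sigma_i)\subseteq\bigcup_{j}\Hom_{\mu_j}^X(\cS,F,m,2\delta,\sigma_i)$ (the measure condition (i) holding because $\mu_\varphi$ and $\mu_j$ agree on $\cS_{F,m}$ to within $2\delta$, the equivariance condition (ii) being a weakening of membership in $\Hom(\cS,F,\delta,\sigma_i)$ given $\delta$ small relative to the $2^{-n}$ weights and $m$ fixed). Subadditivity of $N_\varepsilon$ over this finite union, the fact that $r$ is independent of $i$, and $\frac1{d_i}\log r\to 0$, give
\begin{align*}
\limsup_{i\to\infty}\frac1{d_i}\log N_\varepsilon(\Hom(\cS,F,\delta,\sigma_i),\rho_\cS)
\le\max_j\limsup_{i\to\infty}\frac1{d_i}\log N_\varepsilon(\Hom_{\mu_j}^X(\cS,F,m,2\delta,\sigma_i),\rho_\cS)
\le\sup_{\mu\in M_G(X)}\ch_{\Sigma,\mu}^\varepsilon(\cS).
\end{align*}
Taking $\inf$ over $F$ and $\delta$ yields $h_\Sigma^\varepsilon(\cS)\le\sup_\mu\ch_{\Sigma,\mu}^\varepsilon(\cS)\le\sup_\mu h_{\Sigma,\mu}(\cS)$, and then $\sup_{\varepsilon>0}$ gives $h_\Sigma(X,G)\le\sup_\mu h_{\Sigma,\mu}(X,G)$.

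The main obstacle is the uniformity of the covering argument: one must choose $\cU_1,\dots,\cU_r$ \emph{before} letting $i\to\infty$, so the number $r$ and the measures $\mu_j$ cannot depend on $i$, yet the empirical measures $\mu_\varphi$ depend on $i$ and $\varphi$; the resolution is exactly that almost-invariance confines all $\mu_\varphi$ to a fixed neighbourhood of the compact set $M_G(X)$ determined only by $(F,\delta)$, so a single finite subcover works for all large $i$. A secondary subtlety is the bookkeeping showing that a map lying in $\Hom_{\mu_j}^X(\cS,F,m,2\delta,\sigma_i)$ indeed has its parameters controlled by those of $\Hom(\cS,F,\delta,\sigma_i)$ — in particular that the single $\ell^2$-summed equivariance inequality defining $\Hom(\cS,F,\delta,\sigma)$ implies the finitely many inequalities $\|\varphi\circ\alpha_s(p_n)-\sigma_s\circ\varphi(p_n)\|_2<2\delta$ for $n\le m$, which holds once $\delta\cdot 2^{m}$ is small, i.e. after passing to a smaller $\delta$; this is routine but must be arranged in the right order of quantifiers.
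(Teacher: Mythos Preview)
Your easy direction and the overall setup are fine and match the paper's approach. The hard direction, however, has a genuine gap in the order of quantifiers.

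You establish, for each fixed $(F,m,\delta)$, finitely many measures $\mu_1,\dots,\mu_r$ (depending on $F,m,\delta$) such that
\[
h_\Sigma^\varepsilon(\cS,F,\delta)\ \le\ \max_j\,\ch_{\Sigma,\mu_j}^\varepsilon(\cS,F,m,2\delta).
\]
You then assert that the right-hand side is at most $\sup_{\mu\in M_G(X)}\ch_{\Sigma,\mu}^\varepsilon(\cS)$. But $\ch_{\Sigma,\mu}^\varepsilon(\cS)=\inf_{F',m',\delta'}\ch_{\Sigma,\mu}^\varepsilon(\cS,F',m',\delta')$, so the inequality $\ch_{\Sigma,\mu_j}^\varepsilon(\cS,F,m,2\delta)\le\ch_{\Sigma,\mu_j}^\varepsilon(\cS)$ goes the \emph{wrong} way. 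Since the $\mu_j$'s change with $(F,m,\delta)$, taking the infimum over $(F,m,\delta)$ on the left does not produce any single $\mu$ on the right; you are implicitly asserting a minimax interchange $\inf_{F,m,\delta}\sup_\mu=\sup_\mu\inf_{F,m,\delta}$ that has not been justified. A secondary issue is your replacement of the centres $\mu_j$ by elements of $M_G(X)$: for fixed $(F,\delta)$ the empirical measures are only $O(\delta)$-almost-invariant, so some $\cU_j$ containing empirical measures need not meet $M_G(X)$ at that scale; the phrase ``once $F,\delta$ are fine enough'' cannot be invoked inside an argument that has already fixed $F$ and $\delta$.

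The paper closes this gap by a diagonal/limit construction rather than a covering. One runs through an increasing exhaustion $F_1\subseteq F_2\subseteq\cdots$ of $G$ with parameters $m=n$, $\delta=1/n$; at stage $n$ a pigeonhole over a fixed finite weak$^*$-net $D\subseteq M(X)$ (and a second pigeonhole along the sofic sequence) produces a \emph{single} $\mu_n\in M(X)$, not required to lie in $M_G(X)$, satisfying both $\ch_{\Sigma,\mu_n}^\varepsilon(\cS,F_n,n,1/n)\ge h_\Sigma^\varepsilon(\cS)$ and the approximate invariance $|\mu_n(\alpha_t f)-\mu_n(f)|<1/n$ for $t\in F_n$, $f\in\cS_{F_n,n}$. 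Any weak$^*$ limit point $\mu$ of $(\mu_n)$ is then genuinely $G$-invariant, and for every $(F,m,\delta)$ one has $\Hom_{\mu_n}^X(\cS,F_n,n,1/n,\sigma)\subseteq\Hom_\mu^X(\cS,F,m,\delta,\sigma)$ for suitable $n$, which yields $\ch_{\Sigma,\mu}^\varepsilon(\cS,F,m,\delta)\ge h_\Sigma^\varepsilon(\cS)$ for \emph{all} parameters and hence $\ch_{\Sigma,\mu}^\varepsilon(\cS)\ge h_\Sigma^\varepsilon(\cS)$. This limiting step is precisely what is missing from your argument.
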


\begin{proof}
Fix a dynamically generating sequence $\cS = \{ p_n \}_{n=1}^\infty$ in the unit ball of $C_{\Rb}(X)$ with $p_1=1$.
Let $\varepsilon > 0$.
We will prove that
$h_\Sigma^\varepsilon (\cS ) = \max_{\mu\in M_G (X)} \ch_{\Sigma ,\mu}^\varepsilon (\cS )$,
from which the theorem will follow in view of Proposition~\ref{P-Hom measure}.

Let $\mu\in M_G (X)$. Denote by $B$ the closed subset of $X$ supporting $\mu$, which is $G$-invariant.
For every nonempty finite set $F\subseteq G$, $m\in\Nb$, $\delta > 0$, and any map $\sigma$ from
$G$ to $\Sym(d)$ for some $d\in \Nb$, we have
\[ \Hom_\mu^X (\cS ,F,m,\delta ,\sigma ) \subseteq \Hom (\cS ,F,\delta +2^{-(m-1)},\sigma ), \]
and so, for every $\varepsilon > 0$,
\begin{align*}
N_{\varepsilon} (\Hom_\mu^X (\cS ,F,m,\delta ,\sigma ),\rho_\cS )
\leq N_\varepsilon (\Hom (\cS ,F,\delta+2^{-(m-1)},\sigma ),\rho_\cS ) .
\end{align*}
%Assuming that the set $M_G (X)$ is nonempty,
%this yields $h_\Sigma^\varepsilon (\cS ) \geq \sup_{\mu\in M_G (X)} \ch_{\Sigma ,\mu}^\varepsilon (\cS )$.
Consequently $h_\Sigma^\varepsilon (\cS ) \geq \sup_{\mu\in M_G (X)} \ch_{\Sigma ,\mu}^\varepsilon (\cS )$.
%Since $\cS$ is dynamically generating for every $\mu\in M_G (X)$, it follows by
%Proposition~\ref{P-Hom measure} that $h_\Sigma (X,G) \geq \sup_{\mu\in M_G (X)} h_{\Sigma ,\mu} (X,G)$.

Now let us prove the the reverse inequality. We may assume that $h_\Sigma (X,G) \neq -\infty$.
%The nonemptiness of $M_G (X)$ in this case will be a consequence of our argument.
Let $\varepsilon>0$.
Take a sequence $e\in F_1 \subseteq F_2 \subseteq\dots$ of finite subsets of $G$ whose union is
equal to $G$. Let $n\in\Nb$. We aim to produce a $\mu_n \in M(X)$ such that
$\ch_{\Sigma ,\mu_n}^{\varepsilon } (\cS ,F_n ,n, 1/n) \geq h_\Sigma^{\varepsilon} (\cS )$
and $| \mu_n (\alpha_t (f)) - \mu_n (f) | < 1/n$ for all $t\in F_n$ and $f\in\cS_{F_n ,n}$.
By weak$^*$ compactness we can find a finite set $D\subseteq M(X)$ such that
for every map $\sigma : G\to\Sym(d)$ for some $d\in \Nb$ and
every $\varphi\in\Hom (\cS , F_n , 1/n , \sigma )$
there is a $\mu_\varphi \in D$ such that
$|\mu_\varphi (\alpha_t (f)) - \zeta\circ\varphi (\alpha_t (f))| < (3n)^{-1}$
for all $t\in F_n$ and $f\in\cS_{F_n ,n}$, where as usual $\zeta$ is the uniform
probability measure on $\{ 1,\dots ,d \}$ viewed as a state on $\Cb^d$.
Let $\sigma$ be a map from $G$ to $\Sym (d)$ for some $d\in\Nb$.
Note that for all $\varphi\in\Hom (\cS , F_n^2 , (3n)^{-2}2^{-n} , \sigma )$, $s_1 , \dots , s_n \in F_n$,
$f_1 , \dots , f_n \in\{p_1, \dots, p_n\}$, and $t\in F_n$ we have, setting $f = \alpha_{s_1} (f_1 )\cdots \alpha_{s_n} (f_n )\in \cS_{F_n, n}$
and assuming that $\sigma$ is a good enough sofic approximation,
\begin{align*}
\lefteqn{| \zeta (\varphi\circ\alpha_t (f) - \sigma_t \circ\varphi (f)) |}\hspace*{10mm} \\
\hspace*{5mm} &\leq \| \varphi\circ\alpha_t (f) - \sigma_t \circ\varphi (f) \|_2 \\
&\leq \sum_{i=1}^n \| \sigma_t \circ\varphi (\alpha_{s_1} (f_1 ) \cdots \alpha_{s_{i-1}} (f_{i-1} )) \\
&\hspace*{8mm} \ \times
(\varphi\circ\alpha_t (\alpha_{s_i} (f_i )) - \sigma_t \circ\varphi (\alpha_{s_i} (f_i )))
\varphi\circ\alpha_t (\alpha_{s_{i+1}} (f_{i+1} ) \cdots \alpha_{s_n} (f_n )) \|_2 \\
&\leq \sum_{i=1}^n \| \varphi\circ\alpha_t (\alpha_{s_i} (f_i )) - \sigma_t \circ\varphi (\alpha_{s_i} (f_i )) \|_2 \\
&\leq \sum_{i=1}^n \big( \| \varphi\circ\alpha_{ts_i} (f) - \sigma_{ts_i} \circ\varphi (f) \|_2
+ \| (\sigma_{ts_i} - \sigma_t \circ\sigma_{s_i} )(\varphi (f)) \|_2 \\
&\hspace*{40mm} \ + \| \sigma_t (\sigma_{s_i} \circ\varphi (f) - \varphi\circ\alpha_{s_i} (f)) \|_2 \big) \\
&< n \bigg( \frac{1}{9n^2} + \frac{1}{9n^2} + \frac{1}{9n^2} \bigg) = \frac{1}{3n}
\end{align*}
so that
\begin{align*}
| \mu_\varphi ( \alpha_t (f)) - \mu_\varphi (f) |
&\leq | \mu_\varphi (\alpha_t (f)) - \zeta\circ\varphi (\alpha_t (f)) | +
| \zeta (\varphi\circ\alpha_t (f) - \sigma_t \circ\varphi (f)) | \\
&\hspace*{30mm} \ + | \zeta\circ\varphi (f) - \mu_\varphi (f) | \\
&< \frac{1}{3n} + \frac{1}{3n} + \frac{1}{3n} = \frac1n .
\end{align*}
Take a maximal $\varepsilon$-separated subset $L$ of $\Hom (\cS , F_n^2 , (3n)^{-2}2^{-n} , \sigma )$.
By the pigeonhole principle there exists a $\nu\in D$ such that the set
\[ W(\sigma ,\nu ) = \{ \varphi\in L : \mu_\varphi = \nu \} \]
satisfies $| W(\sigma ,\nu ) | \geq |L|/|D|$.
Note that $W(\sigma ,\nu ) \subseteq \Hom_\nu^X (\cS , F_n ,n, 1/n , \sigma )$ as $F_n \subseteq F_n^2$ and $p_1=1$.
Since $W(\sigma ,\nu )$ is $\varepsilon$-separated,
we obtain
\begin{align*}
N_{\varepsilon} (\Hom_\nu^X (\cS ,F_n ,n,1/n , \sigma ), \rho_{\cS} )
&\geq |W(\sigma ,\nu )| \\
&\geq \frac{|L|}{|D|}
= \frac{1}{|D|} N_\varepsilon (\Hom (\cS ,F_n^2 ,(3n)^{-2}2^{-n} , \sigma ), \rho_{\cS} ) .
\end{align*}

Letting $\sigma$ now run through the terms of the
sofic approximation sequence $\Sigma$, we infer by the pigeonhole principle that there exist
a $\mu_n \in D$ and a sequence $i_1 < i_2 < \dots$ in $\Nb$ with
\[
h_\Sigma^\varepsilon (\cS , F_n^2 ,(3n)^{-2}2^{-n} ) = \lim_{k\to\infty} \frac{1}{d_{i_k}}
\log N_\varepsilon (\Hom (\cS ,F_n^2 ,(3n)^{-2}2^{-n} , \sigma_{i_k} ), \rho_\cS )
\]
such that
$|W(\sigma_{i_k} ,\mu_n )| \ge |D|^{-1} N_\varepsilon (\Hom (\cS ,F_n^2 ,(3n)^{-2}2^{-n} , \sigma_{i_k} ), \rho_{\cS} )$
for all $k\in\Nb$. Then
\begin{align*}
\ch_{\Sigma ,\mu_n}^{\varepsilon } (\cS ,F_n ,n, 1/n)
&\geq \lim_{k\to\infty} \frac{1}{d_{i_k}} \log \frac{1}{|D|} N_\varepsilon (\Hom (\cS ,F_n^2 ,(3n)^{-2}2^{-n} , \sigma_{i_k} ),\rho_\cS ) \\
&= h_\Sigma^\varepsilon (\cS , F_n^2 ,(3n)^{-2}2^{-n} ) \\
&\geq h_\Sigma^\varepsilon (\cS )
\end{align*}
and $| \mu_n (\alpha_t (f)) - \mu_n (f) | < 1/n$ for all $t\in F_n$ and $f\in\cS_{F_n ,n}$.
So $\mu_n$ satisfies the required properties.

Having constructed a $\mu_n$ for each $n\in\Nb$, take a weak$^*$ limit point $\mu$ of
the sequence $\{ \mu_n \}_{n=1}^\infty$.
Given a $t\in G$ and an $f\in C(X)$ of the form
$\alpha_{s_1} (f_1 ) \cdots \alpha_{s_k} (f_k )$ where $s_1 , \dots , s_k \in G$ and $f_1 , \dots , f_k \in\cS$,
we have
\begin{align*}
| \mu (\alpha_t (f)) - \mu (f) | &\leq | \mu (\alpha_t (f)) - \mu_n (\alpha_t (f)) |
+ | \mu_n (\alpha_t (f)) - \mu_n (f) | + | \mu_n (f) - \mu (f) |
\end{align*}
and the infimum of the right-hand side over all $n\in\Nb$ is zero.
Since $\cS$ is generating and $p_1=1$, every element of $C(X)$ can be approximated arbitrarily well by
linear combinations of functions of the above form, and so we deduce that $\mu$ is $G$-invariant.

Let $F$ be a nonempty finite subset of $G$, $m\in \Nb$, and $\delta>0$. Take an integer $n$
such that $F\subseteq F_n$, $m\le n$, $\delta\ge 2/n$, and
$\max_{f\in \cS_{F, m}}|\mu_n(f)-\mu(f)|<\delta/2$.
Then, for every map $\sigma$ from $G$ to $\Sym(d)$ for some $d\in \Nb$,  every $\varphi$ in
$\Hom_{\mu_n}^X (\cS , F_n, n, 1/n, \sigma)$, and every $f\in \cS_{F, m}$ we have
\begin{align*}
|\zeta \circ \varphi(f)-\mu(f)|&\le |\zeta \circ \varphi(f)-\mu_n(f)|+|\mu_n(f)-\mu(f)|\\
%&<|\zeta \circ \varphi(f)-\mu_n(f)|+\frac{\delta}{2}\\
&<\frac{1}{n}+\frac{\delta}{2}\le \delta,
\end{align*}
and hence $\varphi\in \Hom_{\mu}^X (\cS , F, m, \delta, \sigma)$.
Thus
\[ \Hom_{\mu_n}^X (\cS , F_n, n, 1/n, \sigma)\subseteq \Hom_{\mu}^X (\cS , F, m, \delta, \sigma)\]
and so $\ch_{\Sigma ,\mu}^{\varepsilon} (\cS , F ,m, \delta)
\geq \ch_{\Sigma ,\mu_n}^{\varepsilon} (\cS , F_n ,n,1/n)\ge h_\Sigma^{\varepsilon} (\cS )$.
Since $F$ was an arbitrary nonempty finite subset of $G$, $m$ an arbitrary positive integer, and $\delta$
an arbitrary positive number,
we obtain $\ch_{\Sigma ,\mu}^{\varepsilon} (\cS)\ge h_\Sigma^{\varepsilon} (\cS )$.
We conclude that
$h_\Sigma^\varepsilon (\cS ) \leq \sup_{\mu\in M_G (X)} \ch_{\Sigma ,\mu}^\varepsilon (\cS )$,
as desired.
\end{proof}

%There is nothing particularly special about $\cP$ being a partition of unity in the first part of the above theorem,
%as the argument will work for more general finite sets granted that one formulates the definitions
%appropriately (see Remark~\ref{R-pou}).

\section{Algebraic actions of residually finite groups}\label{S-algebraic}

Let $G$ be a countable infinite residually finite discrete group, and $\{G_n\}_{n\in \Nb}$ be a sequence of
finite index normal subgroups with $\lim_{n\to \infty}G_n=\{e\}$ in the sense that, for any $s\in G\setminus \{e\}$,
$s\not\in G_n$ when $n$ is sufficiently large. Let $\Sigma = \{ \sigma_i : G\to \Sym (G/G_i )\}$ be the
corresponding sofic approximation sequence, i.e., $\sigma_i$ is the action of left translation via the quotient map
$G\to G/G_i$.
We denote by $C^*(G)$ the universal group $C^*$-algebra of $G$, and by $\cL G$ the left group von Neumann algebra of $G$
(see Section~2.5 of \cite{BroOz08}).
The Fuglede-Kadison determinant of an invertible element $a\in\cL G$ is given by
$\ddet_{\cL G} a = \exp \tr_{\cL G} \log |a|$ where $|a| = (a^* a)^{1/2}$ and $\tr_{\cL G}$ is the canonical tracial
state on $\cL G$ (see Section~2.2 of \cite{Li10} for more details and references).

For an element $f$ in the integral group ring $\Zb G$, the $\Zb G$-module structure of
$\Zb G/\Zb G f$ corresponds to an action of $G$ on $\Zb G/\Zb G f$.
This induces an action $\alpha_f$ of $G$ on the Pontryagin dual $X_f:=\widehat{ \Zb G/\Zb G f}$ via continuous
automorphisms. We may write
\[
X_f=\{h\in (\Rb/\Zb)^G: fh=0\},
\]
and under this identification $\alpha_f$ is the restriction of the right shift action of $G$ on $(\Rb/\Zb)^G$ to $X_f$ (see Section~3 of \cite{Li10}).

In the case that $f\in\Zb G$ is invertible in $\ell^1 (G)$, Bowen showed in \cite{Bowen10b} that
the sofic measure entropy with respect to $\Sigma$ and the normalized Haar measure on $X_f$
is equal to $\log\ddet_{\cL G}f$. The goal of this section is to establish the topological counterpart
of Bowen's result, stated below as Theorem~\ref{T-algebraic}.
In addition we only assume the invertibility of $f$ in $C^*(G)$. In general this is strictly weaker
than the invertibility of $f$ in $\ell^1 (G)$, for instance when $G$ contains a copy of the free group
on two generators, as discussed in the appendix of \cite{Li10}. Note that when $G$ is amenable
the full and reduced group $C^*$-algebras coincide, in which case $\cL G$ is the weak operator closure
of $C^* (G)$, so that the invertibility of $f$ in $C^*(G)$
is the same as the invertibility of $f$ in $\cL G$ (cf.\ \cite{Li10}).
The invertibility of $f$ in $\ell^1 (G)$ implies the existence of a finite generating measurable partition,
a fact which is used in \cite{Bowen10b}. It is not clear though whether this is the case
if $f$ is merely assumed to be invertible in $C^*(G)$, and so it is essential that we use our more general
definition of measure entropy here.

\begin{theorem} \label{T-algebraic}
Let $f\in \Zb G$ be invertible in $C^*(G)$. Then
\[ h_{\Sigma}(X_f, G)=\log \ddet_{\cL G}f.\]
\end{theorem}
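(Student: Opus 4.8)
The plan is to compute $h_\Sigma(X_f,G)$ on one well-chosen finite dynamically generating partition of unity, matching the exponential growth of its microstates with the orders of the groups of periodic points. I would first fix a finite partition of unity $\cP=\{p_1,\dots,p_N\}$ of $\Rb/\Zb$ whose values separate points (so that $t\mapsto(p_1(t),\dots,p_N(t))$ is a homeomorphism onto its image) and which is expansive in the sense that $\max_j|p_j(s)-p_j(t)|\ge c_0$ whenever $\mathrm{dist}(s,t)\ge\|f\|_1^{-1}$ in $\Rb/\Zb$, where $\|f\|_1=\sum_{s\in G}|f_s|$; pulling $\cP$ back along the coordinate at $e$ gives a finite dynamically generating partition of unity in $C(X_f)$, again written $\cP$, so that $h_\Sigma(X_f,G)=h_\Sigma(\cP)$. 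For each $i$, writing $d_i=[G:G_i]$, let $\bar m_{f,i}\in M_{d_i}(\Zb)$ be the matrix of multiplication by the image $\bar f_i$ of $f$ on $\Zb[G/G_i]\cong\Zb^{d_i}$. Since the regular representation of $\Cb[G/G_i]$ is a quotient $C^*$-algebra of $C^*(G)$ and $f$ is invertible there, $\bar m_{f,i}$ is invertible with $\|\bar m_{f,i}^{-1}\|\le\|f^{-1}\|_{C^*(G)}=:K$ and $\det\bar m_{f,i}\ne0$; moreover the group $X_f^{G_i}$ of $G_i$-fixed points of $X_f$ is canonically identified with $\ker(\bar m_{f,i}\colon(\Rb/\Zb)^{d_i}\to(\Rb/\Zb)^{d_i})=\bar m_{f,i}^{-1}\Zb^{d_i}/\Zb^{d_i}$, a finite group of order $|\det\bar m_{f,i}|$. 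The decisive input is Theorem~\ref{T-approximate det}: $\tfrac1{d_i}\log|\det\bar m_{f,i}|=\tfrac1{2d_i}\Tr\log(\bar m_{f,i}^*\bar m_{f,i})\to\tfrac12\tr_{\cL G}\log(f^*f)=\log\ddet_{\cL G}f$, which I would prove by approximating $\log$ uniformly on the spectrum of $f^*f$ --- a compact subset of $(0,\infty)$ since $f^*f$ is invertible in $C^*(G)$ --- by polynomials $q$, and noting that $\tfrac1{d_i}\Tr\,q(\bar m_{f,i}^*\bar m_{f,i})=\tr_{\cL G}\,q(f^*f)$ once $i$ is large enough that $G_i$ contains no nontrivial element of the support of $q(f^*f)$.

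For the lower bound, each $x_0\in X_f^{G_i}$ determines a genuinely $\sigma_i$-equivariant unital homomorphism $\varphi_{x_0}\colon C(X_f)\to\Cb^{d_i}$, namely evaluation at the $d_i$ points $\alpha_g(x_0)$ for $gG_i\in G/G_i$; hence $\varphi_{x_0}\in\Hom(\cP,F,\delta,\sigma_i)$ for all $F,\delta$. If $x_0\ne x_0'$ lie in $X_f^{G_i}$, write $x_0-x_0'=\bar m_{f,i}^{-1}z\bmod\Zb^{d_i}$ with $z\in\Zb^{d_i}\setminus\bar m_{f,i}\Zb^{d_i}$; were some integer translate of $\bar m_{f,i}^{-1}z$ of $\ell^\infty$-norm $<\|f\|_1^{-1}$, then applying $\bar m_{f,i}$ (whose $\ell^\infty\to\ell^\infty$ norm is at most $\|f\|_1$) would produce a nonzero integer vector of $\ell^\infty$-norm $<1$, which is absurd, so $x_0$ and $x_0'$ differ by at least $\|f\|_1^{-1}$ in some coordinate, and expansiveness of $\cP$ gives $\rho_{\cP,\infty}(\varphi_{x_0},\varphi_{x_0'})\ge c_0$. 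Thus $N_{c_0}(\Hom(\cP,F,\delta,\sigma_i),\rho_{\cP,\infty})\ge|\det\bar m_{f,i}|$ for all $F,\delta$, and Proposition~\ref{P-infinity norm} together with Theorem~\ref{T-approximate det} yields $h_\Sigma(X_f,G)\ge\log\ddet_{\cL G}f$.

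For the upper bound, fix $\varepsilon>0$, a finite $F\supseteq\supp(f)\cup\{e\}$, and a small $\delta>0$. A $\varphi\in\Hom(\cP,F,\delta,\sigma_i)$ is evaluation at points $\psi(c)\in X_f$ ($c\in G/G_i$) with $\psi$ $\delta$-approximately equivariant; put $y_c=\psi(c)_e$. The relations $f\psi(c)=0$ combined with approximate equivariance on $\supp(f)\subseteq F$ give, after a Chebyshev estimate controlling the exceptional coordinates, $\|\bar m_{f,i}y\|_2<\varepsilon'$ with $\varepsilon'\to0$ as $\delta\to0$, uniformly in $i$; lifting $y$ and subtracting from $\bar m_{f,i}\tilde y$ its nearest integer vector then shows, via $\|\bar m_{f,i}^{-1}\|\le K$, that $y$ lies within $O(K{\varepsilon'}^{1/4})$ in $\|\cdot\|_2$ of the finite set $X_f^{G_i}$. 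Since the $p_j$ are Lipschitz coordinate functions, $\rho_\cP(\varphi,\varphi')\le L\|y-y'\|_2$, so once $\delta$ is small enough that $\varepsilon>2LK{\varepsilon'}^{1/4}$, an $\varepsilon$-separated subset of $\Hom(\cP,F,\delta,\sigma_i)$ maps injectively into $X_f^{G_i}$; hence $N_\varepsilon(\Hom(\cP,F,\delta,\sigma_i),\rho_\cP)\le|\det\bar m_{f,i}|$, and with Theorem~\ref{T-approximate det} this gives $h_\Sigma^\varepsilon(\cP,F,\delta)\le\log\ddet_{\cL G}f$; letting $\delta\to0$ for this $F$ and then $\varepsilon$ vary yields $h_\Sigma(X_f,G)\le\log\ddet_{\cL G}f$.

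The hard part will be the linearization in the upper bound: turning a $\delta$-approximately equivariant homomorphism into $X_f$ into a point $o_\delta(1)$-close to the \emph{finite} kernel $X_f^{G_i}$, with error uniform along the sofic approximation. The uniform bound $\|\bar m_{f,i}^{-1}\|\le\|f^{-1}\|_{C^*(G)}$ is precisely what collapses the approximate kernel of $\bar m_{f,i}$ onto its exact kernel, and this is the one place where invertibility of $f$ in $C^*(G)$, as opposed to $f$ merely being a non-zero-divisor, is essential; without it the approximate and exact kernels can be exponentially far apart and the count is uncontrolled. Finally, combining the theorem with the variational principle of Section~\ref{S-variational}, and arranging the periodic-point microstates above to be asymptotically distributed according to the Haar measure $\lambda$ of $X_f$, one also obtains $h_{\Sigma,\lambda}(X_f,G)=\log\ddet_{\cL G}f$, extending Bowen's computation to the $C^*(G)$-invertible setting.
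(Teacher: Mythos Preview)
Your argument is correct and closely parallels the paper's, but the upper bound takes a genuinely different route. The paper bounds $h_{\Sigma,\mu}(\cP)$ for every $G$-invariant Borel probability measure $\mu$ (Lemma~\ref{L-algebraic upper bound}) and then invokes the variational principle (Theorem~\ref{T-variational}); you instead bound the topological quantity $h_\Sigma(\cP)$ directly, showing that every $\varphi\in\Hom(\cP,F,\delta,\sigma_n)$ is $\rho_\cP$-close to some $\varphi_{x_0}$ with $x_0\in\Fix_{G_n}(X_f)$. Your linearization step---rounding $\bar m_{f,n}\tilde y$ to its nearest integer vector and then applying the uniform bound $\|\bar m_{f,n}^{-1}\|\le\|f^{-1}\|_{C^*(G)}$---handles the integer ambiguities in the lift cleanly, whereas the paper uses the measure condition in $\Hom_\mu^X$ to control the set of coordinates landing near the branch cut of a fixed lift $[\xi,1+\xi)\to\Rb/\Zb$. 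Your route is shorter and avoids the variational principle entirely; the paper's route, on the other hand, establishes the sharper statement that $h_{\Sigma,\mu}(X_f,G)\le\log\ddet_{\cL G}f$ for \emph{every} invariant $\mu$, which does not fall out of your argument. The lower bounds are essentially identical, and your proof of Theorem~\ref{T-approximate det} via polynomial approximation of $\log$ on the spectrum of $f^*f$ is the same as the paper's (which phrases it as applying Lemma~\ref{L-approximate trace} to $\log|f|$).

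One expository point: your displayed inequality $\|\bar m_{f,i}y\|_2<\varepsilon'$ is ill-typed since $y$ lives in $(\Rb/\Zb)^{d_i}$; what you mean, and what your surrounding text makes clear, is that $\bar m_{f,i}\tilde y$ is $\varepsilon'$-close in $\|\cdot\|_2$ to an integer vector. Also, the exponent $1/4$ in $O(K{\varepsilon'}^{1/4})$ should simply be $1$: once $\|\bar m_{f,i}\tilde y - z\|_2<\varepsilon'$ you get $\|\tilde y-\bar m_{f,i}^{-1}z\|_2\le K\varepsilon'$ directly.
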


Denote by $\pi$ the homomorphism $C^*(G)\rightarrow \cL G$.
For each $n\in \Nb$  denote by
$\pi_n$ the homomorphism $C^*(G)\rightarrow \cL(G/G_n)$.
The following lemma was proved by Deninger and Schmidt \cite[Lemma 6.2]{DenSch07} in the case $f\in \ell^1(G)$.

\begin{lemma} \label{L-approximate trace}
For every $f\in C^*(G)$ one has
\begin{align*}
\tr_{\cL G} \pi(f)=\lim_{n\to \infty}\tr_{\cL (G/G_n)}\pi_n(f) .
\end{align*}
\end{lemma}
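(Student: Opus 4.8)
The plan is to reduce the identity to the dense subalgebra $\Cb G \subseteq C^*(G)$, verify it there by a direct computation on group elements using residual finiteness, and then pass to all of $C^*(G)$ by a uniform-boundedness argument. First I would recall that $C^*(G)$ is by construction the completion of the complex group algebra $\Cb G$ with respect to the universal norm, so $\Cb G$ is dense in $C^*(G)$, and that the left regular representations of $G$ and of each finite quotient $G/G_n$ induce, via the universal property of $C^*(G)$, $*$-homomorphisms $\pi : C^*(G)\to\cL G$ and $\pi_n : C^*(G)\to\cL(G/G_n)$, which are automatically contractive. Since $\tr_{\cL G}$ and $\tr_{\cL(G/G_n)}$ are states, hence of norm one, the linear functionals $\varphi := \tr_{\cL G}\circ\pi$ and $\varphi_n := \tr_{\cL(G/G_n)}\circ\pi_n$ on $C^*(G)$ all have norm at most one.

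Next I would check convergence on the generators. Writing $u_s \in C^*(G)$ for the canonical unitary attached to $s\in G$, the defining property of the canonical traces gives $\varphi(u_s) = 1$ if $s = e$ and $\varphi(u_s) = 0$ otherwise, while $\varphi_n(u_s) = 1$ if $s \in G_n$ and $\varphi_n(u_s) = 0$ otherwise. By residual finiteness, for each $s \neq e$ we have $s \notin G_n$ for all sufficiently large $n$; hence $\varphi_n(u_s) \to \varphi(u_s)$ for every $s \in G$, and by linearity $\varphi_n(g) \to \varphi(g)$ for every $g \in \Cb G$ (the support of $g$ being finite).

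Finally, given $f \in C^*(G)$ and $\varepsilon > 0$, I would choose $g \in \Cb G$ with $\|f - g\| < \varepsilon/3$ and estimate
\[
|\varphi_n(f) - \varphi(f)| \le |\varphi_n(f - g)| + |\varphi_n(g) - \varphi(g)| + |\varphi(g - f)| \le 2\|f - g\| + |\varphi_n(g) - \varphi(g)|,
\]
which is $< \varepsilon$ once $n$ is large enough that $|\varphi_n(g) - \varphi(g)| < \varepsilon/3$. This yields $\varphi_n(f) \to \varphi(f)$, i.e.\ the asserted limit. There is no substantial obstacle here: the only points needing care are the identification of $C^*(G)$ as the completion of $\Cb G$ and the contractivity of $\pi$ and $\pi_n$ (both immediate from the universal property of $C^*(G)$), after which the result follows from the standard passage from a dense subspace using uniform boundedness of the functionals.
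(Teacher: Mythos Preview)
Your proof is correct and follows essentially the same approach as the paper: verify the identity on $\Cb G$ by computing the traces on group elements and using that $G_n$ eventually misses any fixed $s\neq e$, then extend to $C^*(G)$ by density and the fact that $\tr_{\cL G}\circ\pi$ and $\tr_{\cL(G/G_n)}\circ\pi_n$ are states (hence uniformly bounded). The only cosmetic difference is that the paper writes a general $f\in\Cb G$ as $\sum_s f_s s$ and observes $G_n\cap\supp(f)\subseteq\{e\}$ for large $n$, whereas you phrase the same computation in terms of generators $u_s$.
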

\begin{proof}
%Since $Q(\pi(f))=\pi(Q(f))$ and $Q(\pi_n(f))=\pi_n(Q(f))$, we may assume that $Q(t)=t$.
Consider first the case $f\in \Cb G$. Say, $f=\sum_{s\in G} f_s s$ for $f_s\in \Cb$.
Then
\begin{align*}
\tr_{\cL G}\pi(f)=f_{e} \hspace*{3mm}\text{and}\hspace*{3mm} \tr_{\cL (G/G_n)}\pi_n(f)=\sum_{s\in G_n}f_{s}.
\end{align*}
When $n$ is sufficiently large, $G_n\cap \supp(f)\subseteq \{e\}$ and hence $\tr_{\cL (G/G_n)}\pi_n(f)=f_{e}=\tr_{\cL G}\pi(f)$.

Now consider general $f\in C^*(G)$. Let $\varepsilon>0$. Take a $g\in \Cb G$ with $\|f-g\|<\varepsilon$.
Since both $\tr_{\cL G}\circ \pi$ and $\tr_{\cL (G/G_n)}\circ \pi_n$ are states on $C^*(G)$, we have
\begin{gather*}
|\tr_{\cL G}\pi(f)-\tr_{\cL G}\pi(g)|\le \|f-g\|<\varepsilon, \text{ and}\\
|\tr_{\cL (G/G_n)}\pi_n(f)-\tr_{\cL (G/G_n)}\pi_n(g)|\le \|f-g\|<\varepsilon.
\end{gather*}
Therefore, when $n$ is sufficiently large one has
\begin{align*}
|\tr_{\cL G}\pi(f)-\tr_{\cL (G/G_n)}\pi_n(f)|
&\le |\tr_{\cL G}\pi(f)-\tr_{\cL G}\pi(g)| \\
&\hspace*{10mm} \ + |\tr_{\cL (G/G_n)}\pi_n(f)-\tr_{\cL (G/G_n)}\pi_n(g)| \\
&< 2\varepsilon.
\end{align*}
\end{proof}

The following theorem was proved by Deninger and Schmidt \cite[Thm.\ 6.1]{DenSch07}
in the case of invertible $f\in \ell^1(G)$.
%For an $f\in C^*(G)$ which is invertible, one has $\|\pi_n(f)\|\le \|f\|$ and
%$\|\pi_n(f)^{-1}\|\le \|f^{-1}\|$ for all $n\in \Nb$.
%Thus, using Lemma~\ref{L-approximate trace} one can argue as in the
%proof of Theorem~6.1 in \cite{DenSch07} to obtain the result.

\begin{theorem} \label{T-approximate det}
For every invertible $f\in C^*(G)$ one has
\begin{gather*}
\ddet_{\cL G}\pi(f)=\lim_{n\to \infty}\ddet_{\cL (G/G_n)}\pi_n(f).
\end{gather*}
\end{theorem}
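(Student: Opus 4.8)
The plan is to reduce the statement to Lemma~\ref{L-approximate trace} by approximating the logarithm uniformly by polynomials on a compact interval bounded away from zero.

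First I would fix the relevant spectral bounds. Since $f$ is invertible in $C^*(G)$, so is the positive element $f^*f$, and hence $\spec_{C^*(G)}(f^*f)\subseteq[\varepsilon_0,\|f\|^2]$ for some $\varepsilon_0>0$. The maps $\pi$ and $\pi_n$ are unital $^*$-homomorphisms, so $\pi(f)^*\pi(f)=\pi(f^*f)$ and $\pi_n(f)^*\pi_n(f)=\pi_n(f^*f)$, and since a unital $C^*$-homomorphism does not enlarge spectra one gets $\spec(\pi(f^*f))\subseteq[\varepsilon_0,\|f\|^2]$ and, crucially, $\spec(\pi_n(f^*f))\subseteq[\varepsilon_0,\|f\|^2]$ for \emph{every} $n$. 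In particular $\pi_n(f)$ is invertible in the finite-dimensional algebra $\cL(G/G_n)$, so both sides of the asserted identity make sense, and from $|a|=(a^*a)^{1/2}$ one has $\ddet_{\cL G}\pi(f)=\exp\big(\tfrac{1}{2}\tr_{\cL G}\log\pi(f^*f)\big)$ and $\ddet_{\cL(G/G_n)}\pi_n(f)=\exp\big(\tfrac{1}{2}\tr_{\cL(G/G_n)}\log\pi_n(f^*f)\big)$, the logarithms being formed by continuous functional calculus. Thus it suffices to prove $\tr_{\cL(G/G_n)}\log\pi_n(f^*f)\to\tr_{\cL G}\log\pi(f^*f)$.

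Next, given $\delta>0$, I would pick by the Weierstrass theorem a real polynomial $p$ with $|\log t-p(t)|<\delta$ for all $t\in[\varepsilon_0,\|f\|^2]$. Since homomorphisms commute with polynomial functional calculus, $\pi(p(f^*f))=p(\pi(f^*f))$ and $\pi_n(p(f^*f))=p(\pi_n(f^*f))$, and the uniform spectral bounds give $\|\log\pi(f^*f)-\pi(p(f^*f))\|\le\delta$ and $\|\log\pi_n(f^*f)-\pi_n(p(f^*f))\|\le\delta$ for all $n$. As $\tr_{\cL G}\circ\pi$ and $\tr_{\cL(G/G_n)}\circ\pi_n$ are states on $C^*(G)$, the triangle inequality together with Lemma~\ref{L-approximate trace} applied to the single element $p(f^*f)\in C^*(G)$ yields
\[
\limsup_{n\to\infty}\big|\tr_{\cL G}\log\pi(f^*f)-\tr_{\cL(G/G_n)}\log\pi_n(f^*f)\big|\le 2\delta .
\]
Letting $\delta\to 0$ and then applying the continuous map $t\mapsto e^{t/2}$ finishes the proof.

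The hard part is essentially just the observation behind the uniform lower spectral bound $\varepsilon_0$: without it one could not approximate $\log$ by a \emph{single} polynomial simultaneously for all $n$. That observation is elementary (a unital homomorphism of unital $C^*$-algebras cannot enlarge spectra), so after setting it up everything else is a routine functional-calculus computation on top of Lemma~\ref{L-approximate trace}; this is presumably the ``simple proof'' attributed to the referee in the acknowledgements.
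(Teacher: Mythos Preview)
Your proof is correct and follows the same core idea as the paper: reduce to Lemma~\ref{L-approximate trace} by exploiting that the spectrum of $f^*f$ in $C^*(G)$ is bounded away from $0$. The paper's version is a bit more direct: since $|f|$ is invertible in $C^*(G)$, the element $\log|f|$ already lies in $C^*(G)$ by continuous functional calculus, and unital $^*$-homomorphisms commute with continuous (not just polynomial) functional calculus, giving $\pi_n(\log|f|)=\log|\pi_n(f)|$ and $\pi(\log|f|)=\log|\pi(f)|$ on the nose; one then applies Lemma~\ref{L-approximate trace} directly to $\log|f|$ with no $\delta$-approximation needed. Your Weierstrass step is essentially re-proving this commutation in the special case at hand, so the two arguments are the same in substance.
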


\begin{proof}
Given that $0$ is not in the spectrum of $|f|$, by the functional calculus we have
$\kappa (\log |f|) = \log |\kappa (f)|$ for every unital $^*$-homomorphism
$\kappa$ from $C^* (G)$ to another $C^*$-algebra. We thus obtain the result
by applying Lemma~\ref{L-approximate trace} to $\log |f|$.
\end{proof}

For each $n\in \Nb$ denote by $\Fix_{G_n}(X_f)$ the set of points in $X_f$ fixed by $G_n$.
Note that $X_f$ is a compact group and $\Fix_{G_n}(X_f)$ is a subgroup of $X_f$.
Since $\pi_n(f)\in \Zb (G/G_n)$, we can define $X_{\pi_n(f)}$ similarly. Namely,
\begin{align*}
X_{\pi_n(f)}=\big\{h\in (\Rb/\Zb)^{G/G_n}: \pi_n(f)h=0\big\}.
\end{align*}
Note that $X_{\pi_n(f)}$ is a compact group.

\begin{lemma} \label{L-fixed point}
Let $f\in \Zb G$ and $n\in\Nb$. Then there is a natural group isomorphism
$\Phi_n: X_{\pi_n(f)}\rightarrow \Fix_{G_n}(X_f)$ given by $(\Phi_n(h))_s=h_{sG_n}$
for all $h\in X_{\pi_n(f)}$ and $s\in G$.
\end{lemma}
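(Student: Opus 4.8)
The plan is to realize $\Phi_n$ as the pullback map along the quotient homomorphism $q\colon G\to G/G_n$, and to check that this pullback intertwines the $\Zb G$-module structure on $(\Rb/\Zb)^G$ with the $\Zb(G/G_n)$-module structure on $(\Rb/\Zb)^{G/G_n}$ via $\pi_n$. Concretely, let $p\colon (\Rb/\Zb)^{G/G_n}\to(\Rb/\Zb)^G$ be the continuous group homomorphism $(p(h))_s = h_{sG_n}$, i.e.\ precomposition with $q$. Since $q$ is surjective, $p$ is injective, and its image is precisely the set of $x\in(\Rb/\Zb)^G$ that are constant on each coset $sG_n$.

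First I would record the elementary description of $\Fix_{G_n}(X_f)$: because $\alpha_f$ is the restriction of the shift action of $G$ on $(\Rb/\Zb)^G$, an element $x=(x_s)_{s\in G}$ of $X_f$ lies in $\Fix_{G_n}(X_f)$ exactly when $x_{st}=x_s$ for all $s\in G$ and $t\in G_n$, which (using normality of $G_n$) says precisely that $x$ is constant on the cosets $sG_n$. Thus $\Fix_{G_n}(X_f)= X_f\cap p\big((\Rb/\Zb)^{G/G_n}\big)$ as subsets of $(\Rb/\Zb)^G$, provided we know that $p$ maps $X_{\pi_n(f)}$ into $X_f$.

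The key step is the intertwining identity. Writing $f=\sum_{t\in G}f_t\,t$ with $f_t\in\Zb$, so that $\pi_n(f)=\sum_{t\in G}f_t\,(tG_n)$, I would verify that for every $h\in(\Rb/\Zb)^{G/G_n}$ one has $f\cdot p(h)=p(\pi_n(f)\cdot h)$; checking this coordinatewise at $s\in G$ amounts to the identity $\sum_{t} f_t\, h_{stG_n}=\sum_{t} f_t\,h_{(sG_n)(tG_n)}$, which is immediate from $stG_n=(sG_n)(tG_n)$ in $G/G_n$. Since $p$ is injective, this identity shows $f\cdot p(h)=0$ if and only if $\pi_n(f)\cdot h=0$; hence $p$ restricts to an injective homomorphism $\Phi_n\colon X_{\pi_n(f)}\to X_f$ whose image consists of elements fixed by $G_n$, so $\Phi_n$ maps into $\Fix_{G_n}(X_f)$. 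For surjectivity, given $x\in\Fix_{G_n}(X_f)$ the second paragraph produces a unique $h\in(\Rb/\Zb)^{G/G_n}$ with $p(h)=x$; then $p(\pi_n(f)\cdot h)=f\cdot x=0$ forces $\pi_n(f)\cdot h=0$ by injectivity of $p$, so $h\in X_{\pi_n(f)}$ and $\Phi_n(h)=x$. Naturality of $\Phi_n$ in $f$ is clear from the defining formula.

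The hard part here is essentially bookkeeping rather than mathematics: one must pin down the left/right conventions for $\alpha_f$ and for the induced shift action on $(\Rb/\Zb)^{G/G_n}$ so that $p$ is genuinely equivariant with the coset projections lined up correctly, normality of $G_n$ being what makes $stG_n=(sG_n)(tG_n)$ unambiguous. It is worth emphasizing that invertibility of $f$ plays no role here --- only $f\in\Zb G$ is used --- which is why the lemma is stated at this level of generality; it will be invoked in the next step to compare $h_\Sigma(X_f,G)$ with data attached to the finite systems $X_{\pi_n(f)}$.
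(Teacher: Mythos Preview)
Your proposal is correct and follows essentially the same route as the paper: define the pullback $p$ (the paper calls it $\Psi_n$), identify its image with the coset-constant elements, and verify the intertwining identity $f\cdot p(h)=p(\pi_n(f)\cdot h)$ so that $p$ restricts to the desired isomorphism. The only cosmetic difference is that the paper carries out the intertwining computation using a fixed set of coset representatives and the convention $(fx)_s=\sum_t f_t x_{t^{-1}s}$, whereas your displayed formula $\sum_t f_t h_{stG_n}$ corresponds to the other standard convention; since you already flag this as the bookkeeping issue to pin down, there is no gap.
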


\begin{proof}
It is clear that the formula $(\Psi_n(h))_s=h_{sG_n}$
for $h\in (\Rb/\Zb)^{G/G_n}$ and $s\in G$ defines a group isomorphism $\Psi_n$ from $(\Rb/\Zb)^{G/G_n}$
to the set of $G_n$-fixed points in $(\Rb /\Zb )^G$.
Taking a set $R_n \subseteq G$ of coset representatives for $G/G_n$
and writing $\rho_n : G\to G/G_n$ for the quotient map,
we have, for every $h\in (\Rb/\Zb)^{G/G_n}$ and $s\in G$,
\begin{align*}
(\pi_n (f)h)_{\rho_n (s)} = \sum_{r\in R_n} \bigg( \sum_{t\in G_n} f_{rt} \bigg) h_{\rho_n (r^{-1} s)}
= \sum_{r\in R_n} \sum_{t\in G_n} f_{rt} h_{\rho_n (t^{-1} r^{-1} s)}
%= \sum_{t\in G} f_t h_{\rho (t^{-1} s)}
= (f\Psi_n (h))_s ,
\end{align*}
so that $f\Psi_n(h) = 0$ if and only if $\pi_n (f)h = 0$. Consequently
we obtain the desired isomorphism $\Phi_n$ by restricting $\Psi_n$.
\end{proof}

%\begin{proof}
%It is clear that for every $n$ the formula $(\Psi_n(h))_s=h_{sG_n}$
%for $h\in (\Rb/\Zb)^{G/G_n}$ and $s\in G$ defines a group isomorphism $\Psi_n$ from $(\Rb/\Zb)^{G/G_n}$
%to the set of $G_n$-fixed points in $(\Rb /\Zb )^G$.
%Now if $n$ is large enough so that the quotient map $G\rightarrow G/G_n$ is injective on the support of $f$
%%is disjoint from each
%%of its left translates by elements of $G_n G_n^{-1} \setminus \{ e \}$,
%then for all $h\in (\Rb/\Zb)^{G/G_n}$ we will have
%$f\Psi_n(h) = 0$ if and only if $\pi_n (f)h = 0$, as is readily checked. Consequently for such $n$
%we obtain the desired isomorphisms $\Phi_n$ by restricting the $\Psi_n$.
%\end{proof}

Take a finite partition of unity $\cP$ in $C(\Rb/\Zb)$ which generates $C(\Rb/\Zb)$ as a unital $C^*$-algebra.
Via the coordinate map $X_f\rightarrow \Rb /\Zb$ which evaluates at $e$,
we will think of $\cP$ as a partition of unity in $C(X_f)$. Clearly $\cP$ dynamically generates $C(X_f)$,
and so $h_\Sigma (X_f ,G) = h_\Sigma (\cP )$ and $h_{\Sigma ,\mu} (X_f ,G) = h_{\Sigma ,\mu} (\cP )$
for every $G$-invariant Borel probability measure $\mu$ on $X_f$ (see Definitions~\ref{D-finite} and \ref{D-finite top}).
Consider the compatible metric $\rho$ on $\Rb/\Zb$ defined by
$\rho(x, y)=\max_{p\in \cP}|p(x)-p(y)|$ for $x, y\in \Rb/\Zb$. Again, via the coordinate map $X_f\rightarrow \Rb /\Zb$
which evaluates at $e$, we will think of $\rho$ as a continuous pseudometric on $X_f$.

For each $x\in \Fix_{G_n}(X_f)$, we have  a unital homomorphism $\varphi_x: C(X_f)\rightarrow \Cb^{G/G_n}$ determined by
$(\varphi_x(g))(tG_n)=g(tx)$ for all $g\in C(X_f)$ and $t\in G$.
For any
$g\in C(X_f)$ and  $s, t\in G$, we have
\begin{gather*}
\varphi_x(\alpha_{f, s}(g))(tG_n)=\alpha_{f, s}(g)(tx)=g(s^{-1}tx)=(\varphi_x(g))(s^{-1}tG_n)=(\sigma_{n, s}\circ \varphi_x(g))(tG_n).
\end{gather*}
Thus $\varphi_x\circ \alpha_{f, s}=\sigma_{n, s}\circ \varphi_x$ for all $x\in \Fix_{G_n}(X_f)$ and $s\in G$,
and hence $\varphi_x\in \Hom(\cP, F, \delta, \sigma_n)$
for every nonempty finite subset $F$ of $G$ and every $\delta>0$.

Let $\vartheta$ be the compatible metric on $\Rb/\Zb$ defined by
\[
\vartheta(t_1 \mmod \Zb, t_2 \mmod \Zb)=\min_{m\in \Zb}|t_1-t_2-m|
\]
for all $t_1, t_2\in \Rb$.

\begin{lemma} \label{L-algebraic lower bound}
Let $f\in \Zb G$ be invertible in $C^*(G)$. Then
\begin{align*}
h_{\Sigma}(\cP)\ge \limsup_{n\to \infty} \frac{1}{|G/G_n|}\log |\Fix_{G_n}(X_f)|.
\end{align*}
\end{lemma}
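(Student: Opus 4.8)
The plan is to realize the homomorphisms $\varphi_x$ with $x\in\Fix_{G_n}(X_f)$, constructed just above, as a large $\varepsilon$-separated family inside $\Hom(\cP,F,\delta,\sigma_n)$ and to estimate from below how many of them can be $\varepsilon$-separated. I would work throughout with the $\infty$-norm pseudometric $\rho_{\cP,\infty}$ and rewrite $h_\Sigma(\cP)$ by means of Proposition~\ref{P-infinity norm}; the point of using $\rho_{\cP,\infty}$ rather than $\rho_\cP$ is that $\rho_{\cP,\infty}(\varphi_x,\varphi_y)$ equals the maximum over $t\in G$ of the $\rho$-distance between the $e$-coordinates of $tx$ and $ty$ — a genuine maximum over the $G/G_n$-coordinates of $x$ and $y$ — whereas $\rho_\cP$ controls only an average. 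Note that $\Fix_{G_n}(X_f)$ is finite, since $\pi_n(f)$ is invertible in $\cL(G/G_n)$ (its inverse is $\pi_n(f^{-1})$).

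First I would fix $\varepsilon>0$, a nonempty finite set $F\subseteq G$, and $\delta>0$, and recall from the paragraph preceding the lemma that $\{\varphi_x:x\in\Fix_{G_n}(X_f)\}\subseteq\Hom(\cP,F,\delta,\sigma_n)$, that $x\mapsto\varphi_x$ is injective, and that distinct $\varphi_x$ are at positive $\rho_{\cP,\infty}$-distance. Since $\rho$ and $\vartheta$ are both compatible metrics on $\Rb/\Zb$, the identity map is uniformly continuous in both directions, so there is $c=c(\varepsilon)>0$ with $c(\varepsilon)\to0$ as $\varepsilon\to0$ such that $\rho(a,b)<\varepsilon$ forces $\vartheta(a,b)<c$. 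Hence if $\rho_{\cP,\infty}(\varphi_x,\varphi_y)<\varepsilon$ then $\rho(x_s,y_s)<\varepsilon$, and so $\vartheta(x_s,y_s)<c$, for every $G/G_n$-coordinate $s$; because $\vartheta$ is translation invariant on $\Rb/\Zb$ and $\Fix_{G_n}(X_f)$ is a subgroup of $X_f$, this means $x-y\in D_n(c):=\{z\in\Fix_{G_n}(X_f):\vartheta(z_s,0)<c\ \text{for all}\ sG_n\in G/G_n\}$. Consequently every $(\rho_{\cP,\infty},\varepsilon)$-ball meets $\{\varphi_x\}$ in at most $|D_n(c)|$ points, so a maximal $\varepsilon$-separated subset $Z$ of $\{\varphi_x\}$, being an $\varepsilon$-net, satisfies $|\Fix_{G_n}(X_f)|\le|Z|\cdot|D_n(c)|$, whence $N_\varepsilon(\Hom(\cP,F,\delta,\sigma_n),\rho_{\cP,\infty})\ge|\Fix_{G_n}(X_f)|/|D_n(c)|$.

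The main work is then to bound $|D_n(c)|$. Using the isomorphism $\Phi_n$ of Lemma~\ref{L-fixed point}, $D_n(c)$ corresponds to the set of $h\in X_{\pi_n(f)}\subseteq(\Rb/\Zb)^{G/G_n}$ all of whose coordinates lie within $c$ of $0$; for such $h$ I would take the lift $\tilde h\in(-c,c)^{G/G_n}\subseteq\Rb^{G/G_n}$, and then $\pi_n(f)\tilde h\in\Zb^{G/G_n}$ with $\|\pi_n(f)\tilde h\|_2\le\|\pi_n(f)\|_{\mathrm{op}}\,c\sqrt{|G/G_n|}\le\|f\|_{C^*(G)}\,c\sqrt{|G/G_n|}$. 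Since $\pi_n(f)$ acts on $\Rb^{G/G_n}$ as an invertible linear map, $h\mapsto\pi_n(f)\tilde h$ is injective on $D_n(c)$, so $|D_n(c)|$ is at most the number of integer points of $\Rb^{G/G_n}$ in the $\ell^2$-ball of radius $\beta\sqrt{|G/G_n|}$ with $\beta=\|f\|_{C^*(G)}c$. As $|w_i|\le w_i^2$ for integers, any such $w$ satisfies $\|w\|_1\le\beta^2|G/G_n|$, and by Stirling's approximation the number of integer vectors in $\Rb^D$ with $\|w\|_1\le\beta^2D$ has exponential growth rate (in $D$) at most $\eta(\beta)$ for an explicit function $\eta$ with $\eta(\beta)\to0$ as $\beta\to0$; hence $\limsup_n\tfrac1{|G/G_n|}\log|D_n(c)|\le\eta(\|f\|_{C^*(G)}c)$.

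Combining the two estimates, $\limsup_n\tfrac1{|G/G_n|}\log N_\varepsilon(\Hom(\cP,F,\delta,\sigma_n),\rho_{\cP,\infty})\ge\limsup_n\tfrac1{|G/G_n|}\log|\Fix_{G_n}(X_f)|-\eta(\|f\|_{C^*(G)}c(\varepsilon))$ for every $F$, $\delta$, and $\varepsilon$. Taking the infimum over $F$ and $\delta$ (the right-hand side does not involve them) and then the supremum over $\varepsilon>0$, so that $c(\varepsilon)\to0$ and the error term $\eta(\|f\|_{C^*(G)}c(\varepsilon))\to0$, and applying Proposition~\ref{P-infinity norm}, I would conclude $h_\Sigma(\cP)\ge\limsup_n\tfrac1{|G/G_n|}\log|\Fix_{G_n}(X_f)|$. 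The step I expect to be the main obstacle is the bound on $|D_n(c)|$: one must see that the number of ``short'' periodic points grows subexponentially in $|G/G_n|$ as the radius $c\to0$, uniformly in $n$. This is exactly where invertibility of $f$ is essential — it yields a uniform bound $\|\pi_n(f)\|_{\mathrm{op}}\le\|f\|_{C^*(G)}$, equivalently a uniform positive lower bound on nonzero elements of the lattice $\pi_n(f)^{-1}\Zb^{G/G_n}$ — and the crude volume-of-ball count is too lossy, so the reduction to the $\ell^1$-ball above is what makes the growth rate vanish as $c\to0$.
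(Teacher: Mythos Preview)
Your argument is correct and shares its setup with the paper's proof (working with $\rho_{\cP,\infty}$ via Proposition~\ref{P-infinity norm}, and reducing the question to the size of the set $D_n(c)$ of ``short'' periodic points), but the paper bypasses your counting step entirely by a sharper choice of norm. Instead of bounding $\|\pi_n(f)\tilde h\|_2\le\|f\|_{C^*(G)}\,c\sqrt{|G/G_n|}$ and then counting lattice points in an $\ell^2$-ball, the paper uses the $\ell^\infty$/$\ell^1$ pairing: $\|\pi_n(f)\tilde h\|_\infty\le\|\pi_n(f)\|_1\|\tilde h\|_\infty\le\|f\|_1\,c$. Hence once $\varepsilon$ is chosen so that $c=c(\varepsilon)<1/\|f\|_1$, the integer vector $\pi_n(f)\tilde h$ has $\ell^\infty$-norm strictly less than $1$ and must vanish; invertibility of $\pi_n(f)$ then forces $\tilde h=0$, so $D_n(c)=\{0\}$ for \emph{every} $n$, and the full family $\{\varphi_x:x\in\Fix_{G_n}(X_f)\}$ is already $(\rho_{\cP,\infty},\varepsilon)$-separated. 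Your Stirling estimate on $\ell^1$-balls of $\Zb^D$ is correct but unnecessary: the ``main obstacle'' you anticipated dissolves, and no limiting $\varepsilon\to 0$ is required---a single $\varepsilon$ suffices. What your route would buy is robustness (it still yields something when only a weak operator bound is available), but here the $\ell^\infty$ argument gives a one-line conclusion.
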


\begin{proof}
Since both $\rho$ and $\vartheta$ are compatible metrics on $\Rb/\Zb$, there exists an $\varepsilon>0$ such that
$\vartheta(t_1, t_2)<1/\|f\|_1$ for all $t_1, t_2\in \Rb/\Zb$ with $\rho(t_1, t_2)\le \varepsilon$.
%We will show that $h^{\varepsilon}_{\Sigma}(\cP)\ge \limsup_{n\to \infty} |G/G_n|^{-1} \log |\Fix_{G_n}(X_f)|$.
Let $F$ be a nonempty finite subset of $G$ and $\delta>0$. Let $n\in\Nb$. We will show that
$N_{\varepsilon}(\Hom(\cP, F, \delta, \sigma_n), \rho_{\cP ,\infty})\ge |\Fix_{G_n}(X_f)|$,
which by Proposition~\ref{P-infinity norm} will imply the result.

By Lemma~\ref{L-fixed point} the map $\Phi_n: X_{\pi_n(f)}\rightarrow \Fix_{G_n}(X_f)$ 
given by $(\Phi_n(h))_s=h_{sG_n}$ is an isomorphism.
Let $x, y\in \Fix_{G_n}(X_f)$. Set $\tilde{x}=\Phi_n^{-1}(x)$ and $\tilde{y}=\Phi_n^{-1}(y)$. Then
\begin{align*}
\rho_{\cP ,\infty}(\varphi_x, \varphi_y)&=\max_{p\in \cP}\|\varphi_x(p)-\varphi_y(p)\|_{\infty}
=\max_{p\in \cP}\max_{s\in G}|p(sx)-p(sy)|\\
&=\max_{s\in G}\rho(sx, sy)=\max_{s\in G}\rho(x_s, y_s)=\max_{s\in G}\rho(\tilde{x}_{sG_n}, \tilde{y}_{sG_n}).
\end{align*}
%(Here we use $\|\cdot \|_{\infty}$ in the definition of $\rho_{\cP}$; this needs to be justified.)
Suppose that $\rho_{\cP ,\infty}(\varphi_x, \varphi_y)\le \varepsilon$. Then
\begin{align*}
\max_{s\in G}\vartheta (\tilde{x}_{sG_n}-\tilde{y}_{sG_n}, 0 \mmod \Zb)=\max_{s\in G}\vartheta(\tilde{x}_{sG_n}, \tilde{y}_{sG_n})<1/\|f\|_1.
\end{align*}
Take $z\in [-1, 1]^{G/G_n}$ with $\tilde{x}_{sG_n}-\tilde{y}_{sG_n}=z_{sG_n} \mmod \Zb$ and $|z_{sG_n}|=\vartheta (\tilde{x}_{s_G}-\tilde{y}_{sG_n}, 0 \mmod \Zb)$ for all $s\in G$. Then $\pi_n(f)z\in \Zb^{G/G_n}$ and
\[
\|\pi_n(f)z\|_{\infty}\le \|\pi_n(f)\|_1\|z\|_{\infty}\le \|f\|_1\|z\|_{\infty}<1 .
\]
Thus $\pi_n(f)z=0$. Since $\pi_n(f)$ is invertible in $\cL(G/G_n)$, we get $z=0$ and hence
$\tilde{x}=\tilde{y}$. Consequently, $x=y$. Therefore the set $\{\varphi_x:x\in \Fix_{G_n}(X_f)\}$ is an
$(\rho_{\cP ,\infty}, \varepsilon)$-separated
subset of $\Hom(\cP, F, \delta, \sigma_n)$, and hence
$N_{\varepsilon}(\Hom(\cP, F, \delta, \sigma_n), \rho_{\cP ,\infty})\ge |\Fix_{G_n}(X_f)|$.
\end{proof}

\begin{lemma} \label{L-algebraic upper bound}
Let $f\in \Zb G$ be invertible in $C^*(G)$ and let $\mu$ be a $G$-invariant Borel probability measure on $X_f$.
Then
\begin{gather*}
h_{\Sigma, \mu}(\cP)\le \liminf_{n\to \infty} \frac{1}{|G/G_n|}\log |\Fix_{G_n}(X_f)|.
\end{gather*}
\end{lemma}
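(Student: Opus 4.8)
The plan is to work with the homomorphism description of measure entropy. By Proposition~\ref{P-Hom measure} together with the remark following Definition~\ref{D-Hom finite} we have $h_{\Sigma,\mu}(\cP)=\sup_{\varepsilon>0}\ch_{\Sigma,\mu}^\varepsilon(\cP)$ with $\ch_{\Sigma,\mu}^\varepsilon(\cP)=\inf_F\inf_m\inf_\delta\limsup_{n\to\infty}\tfrac1{|G/G_n|}\log N_\varepsilon(\Hom_\mu^X(\cP,F,m,\delta,\sigma_n),\rho_\cP)$, so it will suffice to fix $\varepsilon>0$ and show that once $F\supseteq\supp(f)\cup\{e\}$ one has, for every $m$, all small enough $\delta>0$, and all $n$,
\[
\tfrac{1}{|G/G_n|}\log N_\varepsilon(\Hom_\mu^X(\cP,F,m,\delta,\sigma_n),\rho_\cP)\le\tfrac{1}{|G/G_n|}\log|\Fix_{G_n}(X_f)|+\kappa(\delta),
\]
where $\kappa(\delta)\to0$ as $\delta\to0$, independently of $n$ and $m$. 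Condition~(i) of Definition~\ref{D-Hom finite} only shrinks $\Hom_\mu^X$, so it may be discarded; this is also why the bound never sees $\mu$.

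The key point is a rigidity statement for nearly equivariant homomorphisms. Such a $\varphi\in\Hom_\mu^X(\cP,F,m,\delta,\sigma_n)$ is given by evaluation at points $x_a\in X_f\subseteq(\Rb/\Zb)^G$, $a\in G/G_n$; put $w=w(\varphi)\in(\Rb/\Zb)^{G/G_n}$, $w_a=(x_a)_e$, which is the only data that $\rho_\cP$ records. For $s\in\supp(f)$ the condition $\|\varphi\circ\alpha_{f,s}(p)-\sigma_{n,s}\circ\varphi(p)\|_2<\delta$ ($p\in\cP$), with Chebyshev's inequality and the uniform equivalence $c_1\vartheta\le\rho\le c_2\vartheta$ of the two metrics on $\Rb/\Zb$, yields $J\subseteq G/G_n$ with $|J|\ge(1-\beta)|G/G_n|$, $\beta\le|\cP|\,|\supp(f)|\,\delta$, such that $\vartheta((x_a)_{s^{-1}},w_{s^{-1}a})<\eta$ for all $a\in J$ and $s\in\supp(f)$, with $\eta=\eta(\delta)\to0$. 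Since $\sum_{s}f_s(x_a)_{s^{-1}}=(fx_a)_e=0$ and $(\pi_n(f)w)_a=\sum_s f_s w_{s^{-1}a}$, this forces $\vartheta((\pi_n(f)w)_a,0)<\|f\|_1\eta$ for $a\in J$. Now lift $w$ to $\tilde w\in(-\tfrac12,\tfrac12]^{G/G_n}\subseteq\Rb^{G/G_n}$, let $k\in\Zb^{G/G_n}$ be the nearest-integer vector to $\pi_n(f)\tilde w$, and put $w^*=\pi_n(f)^{-1}k$; then $\pi_n(f)w^*=k\in\Zb^{G/G_n}$, so $w^*\bmod\Zb^{G/G_n}$ lies in $X_{\pi_n(f)}\cong\Fix_{G_n}(X_f)$ by Lemma~\ref{L-fixed point}, while
\[
\Big(\tfrac{1}{|G/G_n|}\sum_a\vartheta(w_a,w^*_a)^2\Big)^{1/2}\le\|\pi_n(f)^{-1}\|\cdot\big(\|f\|_1^2\eta^2+\tfrac14\beta\big)^{1/2}\le\|f^{-1}\|_{C^*(G)}\big(\|f\|_1^2\eta^2+\tfrac14\beta\big)^{1/2}=:\gamma(\delta).
\]
The uniform bound $\|\pi_n(f)^{-1}\|=\|\pi_n(f^{-1})\|\le\|f^{-1}\|_{C^*(G)}$ holds because $\pi_n$ is a $^*$-representation of $C^*(G)$; this is precisely the place where invertibility in $C^*(G)$ (rather than merely in $\cL(G/G_n)$ or $\cL G$) is used, and it makes $\gamma(\delta)\to0$ as $\delta\to0$ uniformly in $n$.

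The rest is a counting argument. From $c_1\vartheta\le\rho\le c_2\vartheta$ it follows that $\rho_\cP(\varphi,\psi)\ge\varepsilon$ implies $\big(\tfrac1{|G/G_n|}\sum_a\vartheta(w(\varphi)_a,w(\psi)_a)^2\big)^{1/2}\ge\varepsilon/c_2$, so $\varphi\mapsto w(\varphi)$ maps any $\varepsilon$-separated subset of $\Hom_\mu^X(\cP,F,m,\delta,\sigma_n)$ injectively onto an $(\varepsilon/c_2)$-separated (in the normalized $\ell^2$-metric built from $\vartheta$) subset of the $\gamma(\delta)$-neighbourhood of the finite set $X_{\pi_n(f)}$. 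Lifting each such configuration to $\Rb^{G/G_n}$ near its closest point of $X_{\pi_n(f)}$ and applying the standard disjoint-balls volume estimate in $\Rb^{G/G_n}$, once $\gamma(\delta)<\varepsilon/(4c_2)$ such a set has at most $|X_{\pi_n(f)}|\,(1+2c_2\gamma(\delta)/\varepsilon)^{|G/G_n|}=|\Fix_{G_n}(X_f)|\,e^{\kappa(\delta)|G/G_n|}$ elements, with $\kappa(\delta)=\log(1+2c_2\gamma(\delta)/\varepsilon)\to0$. This is the displayed inequality; taking $\limsup_n$ and then the infima over $F$ and $\delta$ gives $\ch_{\Sigma,\mu}^\varepsilon(\cP)\le\limsup_n\tfrac1{|G/G_n|}\log|\Fix_{G_n}(X_f)|$, hence $h_{\Sigma,\mu}(\cP)\le\limsup_n\tfrac1{|G/G_n|}\log|\Fix_{G_n}(X_f)|$. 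Finally $|\Fix_{G_n}(X_f)|=|X_{\pi_n(f)}|=[\Zb^{G/G_n}:\pi_n(f)\Zb^{G/G_n}]=|\det\pi_n(f)|$, so $\tfrac1{|G/G_n|}\log|\Fix_{G_n}(X_f)|=\log\ddet_{\cL(G/G_n)}\pi_n(f)$, which converges by Theorem~\ref{T-approximate det}; its $\limsup$ therefore equals its $\liminf$, and the lemma follows. The main obstacle is the rigidity step of the second paragraph: converting approximate equivariance into the near-identity $\pi_n(f)w\approx0$ and correcting it to an exact kernel element with an estimate uniform in $n$.
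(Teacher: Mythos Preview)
Your argument is correct and, in one respect, cleaner than the paper's. The paper does \emph{not} discard condition~(i) of Definition~\ref{D-Hom finite}: it chooses a point $\xi\in\Rb/\Zb$ of $\omega_*\mu$-measure zero, lifts each $w_a$ into $[\xi,1+\xi)$, and uses the measure-matching condition~(i) to guarantee that most $w_a$ stay away from the lift boundary; this lets the paper compare the lift $\tilde u$ directly with the lift $\tilde y_a$ of $x_a$ and recover the exact integer vector $z_a=(f\tilde y_a)_e$, yielding the sharp bound $N_{2\varepsilon}(\Hom_\mu^{X_f}(\cP,F,m,\delta,\sigma_n),\rho_\cP)\le|\Fix_{G_n}(X_f)|$ with no error term and hence the $\liminf$ bound directly. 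Your nearest-integer rounding of $\pi_n(f)\tilde w$ sidesteps the boundary issue entirely (since $|(\pi_n(f)\tilde w)_a-k_a|=\vartheta((\pi_n(f)w)_a,0)$ regardless of the lift), at the cost of a packing error $e^{\kappa(\delta)|G/G_n|}$ which you then kill by $\inf_\delta$; you obtain only the $\limsup$ bound and must invoke Theorem~\ref{T-approximate det} (which is proved independently) to pass to $\liminf$. One caveat: the bi-Lipschitz inequality $c_1\vartheta\le\rho\le c_2\vartheta$ is not automatic for an arbitrary generating partition of unity $\cP$, only uniform equivalence is; either specify at the outset that $\cP$ is chosen so that $\rho$ is bi-Lipschitz equivalent to $\vartheta$ (e.g.\ $\cP$ built from $\cos 2\pi t$ and $\sin 2\pi t$), or rewrite the two places you use it with uniform-equivalence moduli and a Chebyshev step, which works just as well.
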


\begin{proof}
Using Definition~\ref{D-Hom finite} and the observation following it, it suffices to show that
$\bar{h}^{2\varepsilon}_{\Sigma, \mu}(\cP)\le \liminf_{n\to \infty} |G/G_n|^{-1} \log |\Fix_{G_n}(X_f)|$
for every $\varepsilon>0$.

So let $\varepsilon > 0$.
Since both $\rho$ and $\vartheta$ are compatible metrics on $\Rb/\Zb$, there exists an $\eta'>0$ such that
$\eta'<\varepsilon^2/2$ and $\rho(t_1, t_2)<\varepsilon/2$ for all $t_1, t_2\in \Rb/\Zb$ satisfying
$\vartheta(t_1, t_2)\le \sqrt{\eta'}$.

Denote by $F$ the union of $\{e\}$ and the support of $f$ in $G$.
Denote by $\omega$ the coordinate map $X_f\mapsto \Rb/\Zb$ sending $x$ to $x_e$. 
Then $\omega_*(\mu)$ is a Borel probability measure on $\Rb/\Zb$.
Thus there exists a $\xi\in (0, 1)$ with $\omega_*(\mu)(\{\xi \mmod \Zb\})=0$.
Take an $\eta>0$ such that $48|F|\eta\|f\|_1^2<(\eta'/(2\|f^{-1}\|))^2$.
Take a $\kappa>0$ with $\kappa<\eta'/(2\|f\|_1\|f^{-1}\|)$ such that the closed $(\vartheta, \kappa)$-neighborhood Y of $\xi \mmod \Zb$ in $\Rb/\Zb$ has $\omega_*(\mu)$-measure at most $\eta/2$. Take a $g\in C(\Rb/\Zb)$ with $0\le g\le 1$ on $\Rb/\Zb$, $g=1$ on $Y$, and $\omega_*(\mu)(g)<\eta$.
Via $\omega$ we will also think of $g$ as a function on $X_f$.

Since $\cP$ generates $C(\Rb/\Zb)$ as a unital $C^*$-algebra, there exist an $m\in \Nb$ and a
$\tilde{g}$ in the linear span of the set $\cP_{\{e\}, m}$
of products of the form $p_1 \cdots p_j$ where $1\leq j\leq m$ and $p_1 ,\dots , p_j \in\cP$
such that $\|\tilde{g}-g\|_{\infty}<\eta$. Denote by $M$ the sum of the
absolute values of the coefficients of $\tilde{g}$ as a linear combination of elements in $\cP_{\{e\}, m}$.

Take a $\delta>0$ with $16|F|(|\cP|+M)\delta  \|f\|_1^2<(\eta'/(2\|f^{-1}\|))^2$
such that $\vartheta(t_1, t_2)<\kappa$ for all $t_1, t_2\in \Rb/\Zb$ satisfying
$\rho(t_1, t_2)\le \sqrt{\delta}$. Let $n\in\Nb$.
It suffices to show, in the notation of Definition~\ref{D-Hom finite}, that
\[
N_{2\varepsilon}(\Hom^{X_f}_{\mu}(\cP, F, m, \delta,\sigma_n ), \rho_{\cP})\le |\Fix_{G_n}(X_f)| .
\]
In turn it suffices to show that
for every $\psi\in \Hom^{X_f}_{\mu}(\cP, F, m, \delta, \sigma_n )$ there exists
an $x\in \Fix_{G_n}(X_f)$ such that $\rho_{\cP}(\psi, \varphi_x)<\varepsilon$.

Let $\psi \in \Hom^{X_f}_{\mu}(\cP, F, m, \delta, \sigma_n )$.
Let $a\in G/G_n$. The unital homomorphism $f\mapsto \psi(f)(a)$ on $C(X_f)$ is given by evaluation at some point
$y_a$ of $X_f$. Take $\tilde{y}_a\in [\xi, 1+\xi)^G$ such that $(y_a)_s=(\tilde{y}_a)_s \mmod \Zb$ for all $s\in G$.
Then $f\tilde{y}_a\in \Zb^G$ with $\|f\tilde{y}_a\|_{\infty}\le \|f\|_1\|\tilde{y}_a\|_{\infty}\le 2\|f\|_1$.
Write $z$ for the element of $\Zb^{G/G_n}$ given by $z_a=(f\tilde{y}_a)_e$ for all $a\in G/G_n$.
Define $z'\in (\Rb/\Zb)^{G/G_n}$ by $z'_a=(\pi_n(f)^{-1}z)_a \mod \Zb$ for all $a\in G/G_n$. Then
$z'\in X_{\pi_n(f)}$.
Set $x=\Phi_n(z')$ where $\Phi_n$ is the isomorphism
$X_{\pi_n(f)}\rightarrow \Fix_{G_n}(X_f)$ from Lemma~\ref{L-fixed point}.
We claim that $\rho_{\cP}(\psi, \varphi_x)<\varepsilon$.

Define $u\in (\Rb/\Zb)^{G/G_n}$ and
$\tilde{u}\in [\xi, \xi+1)^{G/G_n}$ by $u_a=(y_a)_e$
and $\tilde{u}_a=(\tilde{y}_a)_e$ for all $a\in G/G_n$. Also, set $v=\pi_n(f)\tilde{u}\in [-2\|f\|_1, 2\|f\|_1]^{G/G_n}$.

Let $p\in \cP$ and $s\in F$. Then
\begin{align*}
\|\sigma_s\circ \psi(p)-\psi\circ \alpha_{f, s}(p)\|_2
&=\bigg(\frac{1}{|G/G_n|}\sum_{a\in G/G_n}|\psi(p)(s^{-1}a)-\alpha_{f, s}(p)(y_a)|^2 \bigg)^{1/2}\\
&=\bigg(\frac{1}{|G/G_n|}\sum_{a\in G/G_n}|p(y_{s^{-1}a})-p(s^{-1}y_a)|^2 \bigg)^{1/2}\\
&=\bigg(\frac{1}{|G/G_n|}\sum_{a\in G/G_n}|p(u_{s^{-1}a})-p((y_a)_{s^{-1}})|^2 \bigg)^{1/2}.
\end{align*}
Since $\|\sigma_s\circ \psi(p)-\psi\circ \alpha_{f, s}(p)\|_2<\delta$, the set of all $a\in G/G_n$ satisfying $|p(u_{s^{-1}a})-p((y_a)_{s^{-1}})|\ge \sqrt{\delta}$ has cardinality at most $\delta|G/G_n|$.
Thus the set $W$ of all $a\in G/G_n$ satisfying $|p(u_{s^{-1}a})-p((y_a)_{s^{-1}})|<\sqrt{\delta}$ for all $p\in \cP$
and $s\in F$ has cardinality at least $|G/G_n|-\delta|\cP||F||G/G_n|$.

We have
\begin{align*}
\zeta \circ \psi(g)-\mu(g)&=\frac{1}{|G/G_n|}\sum_{a\in G/G_n}g(y_a)-\mu(g) \\
&=\frac{1}{|G/G_n|}\sum_{a\in G/G_n}g(u_a)-\omega_*(\mu)(g)\\
&\ge \frac{1}{|G/G_n|}\sum_{a\in G/G_n}g(u_a)-\eta,
\end{align*}
and
\begin{align*}
|\zeta \circ \psi(g)-\mu(g)|&\le |\zeta \circ \psi(g)-\zeta \circ \psi(\tilde{g})|+|\zeta \circ \psi(\tilde{g})-\mu(\tilde{g})|+|\mu(\tilde{g})-\mu(g)|\\
&\le \|g-\tilde{g}\|_{\infty}+M\delta+\|g-\tilde{g}\|_{\infty}<2\eta+M\delta.
\end{align*}
Then the set of all $a\in G/G_n$ satisfying $u_a\in Y$ has cardinality at most $(3\eta+M\delta)|G/G_n|$.
Thus the set $V$ of all $a\in G/G_n$ satisfying $u_{s^{-1}a}\not \in Y$ for all $s\in F$ has cardinality at least
$|G/G_n|-|F|(3\eta+M\delta)|G/G_n|$.

Let $a\in W\cap V$ and $s\in F$. Since $a\in V$, one has $\vartheta(u_{s^{-1}a}, \xi \mod \Zb)>\kappa$,
and hence $\tilde{u}_{s^{-1}a} \in (\xi+\kappa, 1+\xi-\kappa)$.
As $a\in W$, one has
$\rho(u_{s^{-1}a}, (y_a)_{s^{-1}})=\max_{p\in \cP}|p(u_{s^{-1}a})-p((y_a)_{s^{-1}})|<\sqrt{\delta}$, and hence
$\vartheta( u_{s^{-1}a}, (y_a)_{s^{-1}})<\kappa$.
It follows that
$|\tilde{u}_{s^{-1}a}-(\tilde{y}_a)_{s^{-1}}|=\vartheta( u_{s^{-1}a}, (y_a)_{s^{-1}})<\kappa$.
Then
\begin{align*}
|v_a-z_a|&=\bigg|\sum_{s\in F}f_s\tilde{u}_{s^{-1}a}-\sum_{s\in F}f_s(\tilde{y}_a)_{s^{-1}}\bigg|
\le \sum_{s\in F}|f_s|\cdot |\tilde{u}_{s^{-1}a}-(\tilde{y}_a)_{s^{-1}}| \\
&<|f|_1\kappa<\frac{\eta'}{2\|f^{-1}\|}.
\end{align*}

Now we have
\begin{align*}
\|v-z\|_2&=\bigg(\frac{1}{|G/G_n|}\sum_{a\in W\cap V}|v_a-z_a|^2+\frac{1}{|G/G_n|}
\sum_{a\in (G/G_n)\setminus (W\cap V)}|v_a-z_a|^2\bigg)^{1/2}\\
&\le \bigg(\bigg(\frac{\eta'}{2\|f^{-1}\|}\bigg)^2
+\frac{|(G/G_n)\setminus (W\cap V)|}{|G/G_n|} \cdot 16\|f\|^2_1\bigg)^{1/2}\\
&\le \bigg(\bigg(\frac{\eta'}{2\|f^{-1}\|}\bigg)^2
+16|F|(3\eta+(|\cP|+M)\delta)\|f\|^2_1\bigg)^{1/2} \\
&<\frac{\eta'}{\|f^{-1}\|},
\end{align*}
and hence
\begin{align*}
\|\tilde{u}-\pi_n(f)^{-1}z\|_2\le \|\pi_n(f)^{-1}\|\cdot \|v-z\|_2\le \|f^{-1}\|\cdot \frac{\eta'}{\|f^{-1}\|}
%(|F|^2\kappa^2+16(\delta|\cP|+\eta+2\lambda+M\delta)|F|\|f\|^2_1)^{1/2}\\
=\eta'.
\end{align*}
Then the set $W'$ of all $a\in G/G_n$ satisfying $|\tilde{u}_a-(\pi_n(f)^{-1}z)_a|<\sqrt{\eta'}$ has cardinality
at least $|G/G_n|(1-\eta')\ge |G/G_n|(1-\varepsilon^2/2)$. For every $a\in W'$, one has $\vartheta(u_a, z'_a)\le |\tilde{u}_a-(\pi_n(f)^{-1}z)_a|<\sqrt{\eta'}$,
and hence $\rho(u_a, z'_a)<\varepsilon/2$.

For each $a\in G/G_n$ take an $s_a\in G$ such that $a=s_aG_n$.
For every $p\in \cP$ we have
\begin{align*}
\|\psi(p)-\varphi_x(p)\|_2&=\bigg(\frac{1}{|G/G_n|}\sum_{a\in G/G_n}|p(y_a)-p(s_ax)|^2\bigg)^{1/2}\\
&=\bigg(\frac{1}{|G/G_n|}\sum_{a\in G/G_n}|p(u_a)-p(x_{s_a})|^2\bigg)^{1/2}\\
&=\bigg(\frac{1}{|G/G_n|}\sum_{a\in G/G_n}|p(u_a)-p(z'_{s_aG_n})|^2\bigg)^{1/2}\\
&=\bigg(\frac{1}{|G/G_n|}\sum_{a\in G/G_n}|p(u_a)-p(z'_a)|^2\bigg)^{1/2}\\
&=\bigg(\frac{1}{|G/G_n|}\sum_{a\in W'}|p(u_a)-p(z'_a)|^2 \\
&\hspace*{30mm}\ +\frac{1}{|G/G_n|}\sum_{a\in (G/G_n)\setminus W'}|p(u_a)-p(z'_a)|^2\bigg)^{1/2}\\
&\le \bigg(\frac{1}{|G/G_n|}\sum_{a\in W'}\rho(u_a, z'_a)^2
+\frac{1}{|G/G_n|}\sum_{a\in (G/G_n)\setminus W'}1\bigg)^{1/2}\\
&\le \bigg(\frac{\varepsilon^2}{4}+\frac{\varepsilon^2}{2}\bigg)^{1/2}<\varepsilon.
\end{align*}
Therefore
$\rho_{\cP}(\psi, \varphi_x)=\max_{p\in \cP}\|\psi(p)-\varphi_x(p)\|_2<\varepsilon$.
\end{proof}

We are now ready to prove Theorem~\ref{T-algebraic}.

\begin{proof}[Proof of Theorem~\ref{T-algebraic}]
By \cite[Thm.\ 3.2]{Li10}, for each $n\in \Nb$ we have
\[\log \ddet_{\cL (G/G_n)}\pi_n(f)=\frac{1}{|G/G_n|}\log |X_{\pi_n(f)}|.\]
It follows by Theorem~\ref{T-approximate det} and Lemma~\ref{L-fixed point} that
\begin{align*}
\log \ddet_{\cL G}f&=\lim_{n\to \infty}\log \ddet_{\cL (G/G_n)}\pi_n(f)\\
&=\lim_{n\to \infty}\frac{1}{|G/G_n|}\log |X_{\pi_n(f)}|\\
&=\lim_{n\to \infty}\frac{1}{|G/G_n|}\log |\Fix_{G_n}(X_f)|.
\end{align*}
The theorem now follows from Lemmas~\ref{L-algebraic lower bound} and \ref{L-algebraic upper bound} and Theorem~\ref{T-variational}.
\end{proof}

Note that if we take $f$ to be $k$ times the unit in $\Zb G$ for some $k\in\Nb$, then the
action of $G$ on $X_f$ is the Bernoulli shift on $k$ symbols, whose
entropy was computed more generally in Example~\ref{E-Bernoulli} to be $\log k$ for any countable sofic $G$
and sofic approximation sequence $\Sigma$.

In the case of a countable discrete amenable group $G$ acting by automorphisms on a compact metrizable group $K$,
one can show directly that the topological entropy is equal to the measure entropy with
respect to the normalized Haar measure (this is done in \cite{Berg69} for $G=\Zb$ by an
argument that works more generally, and in \cite{Den06}). In our present context,
it follows from Theorem~\ref{T-algebraic} and \cite{Bowen10b} that when $f$ is invertible in $\ell^1 (G)$ we also
have $h_{\Sigma ,\mu} (X_f ,G) = h_\Sigma (X_f ,G)$ where $\mu$ is the normalized Haar measure on $X_f$.
However we do not see how to prove this in a more direct and general way. We thus ask the following.

\begin{problem}
Let $G$ be a countable sofic group acting by automorphisms on a compact metrizable group $K$.
Let $\Sigma$ be a sofic approximation sequence for $G$. Is it true in general that
$h_{\Sigma ,\mu} (K,G) = h_\Sigma (K,G)$ where $\mu$ is the normalized Haar measure on $K$?
What if $G$ is residually finite and $\Sigma$ is assumed to arise from a sequence of finite quotients?
Does equality hold for the type of actions studied in this section without the
assumption that $G$ is residually finite or that $\Sigma$ arises from a sequence of finite quotients?
\end{problem}

\end{document}